\documentclass[a4paper]{amsart}


\usepackage[utf8]{inputenc}

\usepackage{blindtext}
\usepackage{color}

\textwidth=450pt 
\oddsidemargin=12pt
\evensidemargin=12pt
\textheight=700pt
\calclayout

\setlength{\footskip}{25pt}

\usepackage[utf8]{inputenc}
\usepackage[T1]{fontenc}
\usepackage{csquotes}
\usepackage[english]{babel}

\usepackage{algpseudocode}
\usepackage{amsfonts}
\usepackage{amssymb} 
\usepackage{mathtools}
\usepackage{mathpartir} 
\usepackage[all,cmtip]{xy}
\usepackage{thmtools}

\usepackage[colorlinks=false,hidelinks]{hyperref}
\usepackage{cleveref}

\usepackage{enumitem}
\setenumerate{label=(\roman*),itemsep=3pt}



\declaretheorem[
name = Theorem,
]{theorem}
\declaretheorem[
name = Theorem,
unnumbered
]{theorem*}
\declaretheorem[
name = Corollary,
sibling = theorem
]{corollary}
\declaretheorem[
name = Lemma,
sibling = theorem
]{lemma}
\declaretheorem[
name = Fact,
sibling = theorem
]{fact}
\declaretheorem[
name = Proposition,
sibling = theorem
]{proposition}

\declaretheorem[
name = Claim,
numbered = no
]{claim*}

%

\declaretheoremstyle[%
  spaceabove=-6pt,%
  spacebelow=6pt,%
  headfont=\normalfont\itshape,%
  postheadspace=1em,%
  qed=$\blacksquare$,%
  headpunct={.}
]{mystyle}

\declaretheorem[
name = Remark,
sibling = theorem,
style=definition
]{remark}

\declaretheorem[
name = Definition,
sibling = theorem,
style=definition
]{definition}


\newcommand{\IPC}{\mathbf{IPC}}
\newcommand{\IQC}{\mathbf{IQC}}


\newcommand{\vsim}{\mathrel{\scalebox{1}[1.5]{$\shortmid$}\mkern-3.1mu\raisebox{0.15ex}{$\sim$}}}


\newcommand{\set}[1]{\{ #1 \}}
\newcommand{\Set}[2]{\{ #1 \, | \, #2 \}}
\newcommand{\seq}[1]{\langle #1 \rangle}
\newcommand{\Seq}[2]{\langle #1 \, | \, #2 \rangle}
\newcommand{\godel}[1]{\ulcorner #1 \urcorner}
\newcommand{\Ord}{\mathrm{Ord}}

\newcommand{\SRM}{\mathrm{SRM}}

\newcommand{\CZF}{\mathrm{CZF}}
\newcommand{\IZF}{\mathrm{IZF}}
\newcommand{\IKP}{\mathrm{IKP}}
\newcommand{\HA}{\mathrm{HA}}
\newcommand{\ZFC}{\mathrm{ZFC}}

\newcommand{\AC}{\mathrm{AC}}

\newcommand{\Pow}{\mathcal{P}}

\newcommand{\VV}{\mathrm{V}} 
\newcommand{\LL}{\mathrm{L}} 

\newcommand{\id}{\mathrm{id}}

\newcommand{\PL}{\mathbf{PL}}
\newcommand{\QL}{\mathbf{QL}}

\DeclareMathOperator{\dom}{dom}
\DeclareMathOperator{\rank}{rank}

\newcommand{\IFGOTO}[2]{\mathtt{IF} \ #1 \  \mathtt{THEN \ GO \ TO} \ #2}
\newcommand{\GOTO}{\mathtt{GO \ TO} \ }
\newcommand{\TAKE}{\mathtt{TAKE}}
\newcommand{\REMOVE}{\mathtt{REMOVE}}
\newcommand{\ADD}{\mathtt{ADD}}
\newcommand{\COPY}{\mathtt{COPY}}
\newcommand{\POW}{\mathtt{POW}}




\usepackage[square,numbers]{natbib}

\renewcommand{\phi}{\varphi} 

\title{The first-order logic of CZF is intuitionistic first-order logic}
\date{\today}

\author[Robert Passmann]{Robert Passmann}
\address{Institute for Logic, Language and Computation, Faculty of Science, University of Amsterdam, P.O. Box 94242, 1090 GE Amsterdam, The Netherlands}
\email{r.passmann@uva.nl}


\begin{document}

\maketitle

\begin{abstract}
    We prove that the first-order logic of CZF is intuitionistic first-order logic. To do so, we introduce a new model of transfinite computation (Set Register Machines) and combine the resulting notion of realisability with Beth semantics. On the way, we also show that the propositional admissible rules of CZF are exactly those of intuitionistic  propositional logic.
\end{abstract}

\section{Introduction}

The \textit{first-order logic of a theory $T$} consists of those first-order formulas for which all substitution instances are provable in $T$. A classical result of \citet{FriedmanScedrov1986} is that very few axioms suffice for a set theory to exceed the logical strength of intuitionistic first-order logic:

\begin{theorem*}[Friedman \& Ščedrov, 1986]
    Let $T$ be a set theory based on intuitionistic first-order logic that contains the axioms of extensionality, pairing and (finite) union, as well as the separation schema. Then the first-order logic of $T$ exceeds the strength of intuitionistic first-order logic. 
\end{theorem*}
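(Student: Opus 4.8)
The plan is to find a single first-order formula $\Phi$, in a relational signature with equality, that is \emph{not} derivable in $\IQC$ yet has all of its set-theoretic substitution instances provable in $T$. Since $T$ is based on intuitionistic first-order logic, its first-order logic already contains $\IQC$, so such a $\Phi$ witnesses that this containment is strict. The formula I would use is
\[
  \Phi \;\equiv\; \exists u\,\exists v\,\exists w\,\bigl[\,\neg(u = v)\wedge(P \to w = u)\wedge(\neg P \to w = v)\,\bigr],
\]
where $P$ is a propositional (nullary predicate) letter. Informally, $\Phi$ demands a witness $w$ that ``tracks'' the truth value of $P$ between two distinct objects $u$ and $v$; classically this is immediate, but intuitionistically one cannot produce $w$ without first deciding $P$.

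For the provability direction, fix a set-theoretic sentence $\phi$ to be substituted for $P$ (the argument relativises to formulas with parameters). Working in $T$: the domain is nonempty, so separation carves out $0 := \emptyset$ from any set, pairing produces $1 := \set{0}$, and extensionality gives $0 \neq 1$ because $0 \in 1$ while $0 \notin 0$. Now full separation yields the tracking set
\[
  c_\phi \;:=\; \Set{x \in 1}{\phi},
\]
and a short extensionality argument shows $\phi \to c_\phi = 1$ and $\neg\phi \to c_\phi = 0$. Taking $u = 1$, $v = 0$ and $w = c_\phi$ therefore verifies $\Phi(\phi)$ in $T$. The key point is that $\phi$ may be logically arbitrary, so forming $c_\phi$ genuinely requires \emph{full} separation; under only $\Delta_0$-separation, as in $\CZF$, the set $c_\phi$ need not exist, which is exactly what leaves room for the main theorem.

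To see that $\Phi \notin \IQC$, I would exhibit a finite Kripke countermodel. Take the frame whose root $r$ lies below two terminal, incomparable nodes $s_1$ and $s_2$, with constant domain $\set{a, b}$ and equality interpreted as identity (so $a \neq b$ is forced at every node), and force $P$ at $s_1$ but at neither $r$ nor $s_2$; then $P$ is undetermined at $r$ while $\neg P$ is forced at $s_2$. For any $u, v, w \in \set{a, b}$ with $\neg(u = v)$ forced at $r$ we must have $\set{u, v} = \set{a, b}$; but then $(P \to w = u)$ forces $w = u$ at $s_1$ and $(\neg P \to w = v)$ forces $w = v$ at $s_2$, giving $u = w = v$ and contradicting $u \neq v$. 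Hence $\Phi$ is not forced at $r$, so it is not a theorem of $\IQC$ — and note this holds even though the model has two distinct elements, so the phenomenon is really about separation rather than merely about $T$ proving that distinct sets exist.

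The hard part is the first step: isolating a $\Phi$ robust enough that \emph{every} substitution instance is provable while still escaping $\IQC$. Once the tracking-set idea is found, both verifications are short; the conceptual obstacle is recognising that it is the interaction of full separation (to build $c_\phi$ from an arbitrary $\phi$) with extensionality (to convert the two cases into genuine equalities $c_\phi = 1$ and $c_\phi = 0$ with $1 \neq 0$) that forces a non-intuitionistic quantificational principle. One detail to fix in advance is the role of equality: the witness above lives in predicate logic with equality, which I expect to be the paper's convention; an equality-free formulation would instead need a substituted binary predicate to simulate the two distinct ``anchors'' $u$ and $v$.
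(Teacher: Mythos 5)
The paper does not actually prove this theorem --- it is stated with a citation to Friedman and Ščedrov --- so there is no in-text proof to compare against line by line. That said, your core idea is exactly the classical one: use full separation to form the truth-value set $c_\phi = \Set{x \in 1}{\phi}$ and extensionality to convert the two cases into $\phi \to c_\phi = 1$ and $\neg\phi \to c_\phi = 0$, producing a ``discriminating'' witness that no Kripke model can supply uniformly. Both of your verifications (the derivation of $\Phi^\tau$ in $T$, and the three-node Kripke countermodel showing $\Phi \notin \IQC$ when $=$ is interpreted as identity) are correct as written.

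The one genuine gap is the issue you flag but do not close: the role of equality. The paper's definition of a first-order translation sends \emph{every} relation symbol $R(x_1,\dots,x_n)$ to an arbitrary $\mathcal{L}_T$-formula, with no exemption for $=$, and its main construction translates only predicate letters $P_i$; the relevant logic is pure (equality-free) $\IQC$. Under that convention your $\Phi$ fails to witness the theorem: a translation sending the binary symbol $=$ to $\bot$ turns $\Phi^\tau$ into $\exists u\exists v\exists w\,(\neg\bot \wedge (\phi^\tau \to \bot) \wedge (\neg\phi^\tau \to \bot))$, which no consistent $T$ proves, so $\Phi \notin \QL(T)$. The repair is not merely notational but changes the formula: replace the two anchors and the equalities by a substituted unary predicate $A$ and make the statement conditional, e.g.
$$
\exists u\,\exists v\,(\neg A(u) \wedge A(v)) \;\to\; \exists w\,\big((P \to A(w)) \wedge (\neg P \to \neg A(w))\big).
$$
The provability direction must now handle an \emph{arbitrary} substitution for $A$, not just $A(x) \mapsto 0 \in x$: given $u,v$ with $\neg A^\tau(u)$ and $A^\tau(v)$, take $w = \Set{z \in v}{\phi} \cup \Set{z \in u}{\neg\phi}$ and use extensionality to get $\phi \to w = v$ and $\neg\phi \to w = u$, hence $A^\tau(w)$ resp.\ $\neg A^\tau(w)$. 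This is exactly where the (finite) union axiom in the hypothesis list enters --- your argument never uses union, which is a sign that the intended witness has this conditional, equality-free shape. Your Kripke refutation adapts verbatim (force $A$ at $b$ at every node and at $a$ nowhere).
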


This result applies to intuitionistic Zermelo-Fraenkel Set Theory ($\IZF$) but not to constructive Zermelo-Fraenkel set theory ($\CZF$) because the separation schema of $\CZF$ is restricted to $\Delta_0$-formulas. It has, thus, been a long-standing open question whether the first-order logic of $\CZF$ exceeds the strength of intuitionistic logic as well. We give an answer to this question:

\begin{theorem*}[{see \Cref{Theorem: FOL of CZF is intuitionistic}}]
    The first-order logic of $\CZF$ is intuitionistic first-order logic. 
\end{theorem*}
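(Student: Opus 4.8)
The plan is to establish the nontrivial inclusion: if a first-order formula $\varphi$ (built from a supply of predicate letters) is not a theorem of $\IQC$, then some substitution instance of $\varphi$ by set-theoretic formulas is not provable in $\CZF$. The converse inclusion is immediate, since $\CZF$ is based on $\IQC$ and therefore proves every substitution instance of an $\IQC$-theorem. So fix $\varphi$ with $\IQC \nvdash \varphi$; the goal is to produce a substitution $\sigma$ with $\CZF \nvdash \sigma(\varphi)$.

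First I would invoke completeness of $\IQC$ with respect to Beth semantics over a fixed, canonical frame — a suitable countable tree such as the full binary tree $2^{<\omega}$ with the usual bar (covering) conditions — to obtain a Beth model $\mathcal{B}$ whose root $r$ refutes $\varphi$. Working over one canonical frame, rather than an arbitrary Kripke countermodel, is what makes the subsequent combination with realisability tractable, since the covering structure of Beth models aligns naturally with the ``local witness plus search'' character of realisability.

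The core of the argument is to build a realisability interpretation of $\CZF$ that is parametrised by the nodes of this tree, using Set Register Machines as the underlying model of computation. Concretely, I would define a forcing-style relation $n \Vdash \psi$ (``$\psi$ is realised at node $n$'') in which realisers are codes for transfinite set-register computations, and the clauses for $\to$, $\forall$, and membership incorporate the Beth covering condition, so that what is realised at $n$ reflects what holds along every branch through $n$. A genuinely set-sized model of computation, rather than ordinary Kleene realisability, is needed here because the realisers must be able to search over the full class-sized domain in order to witness the unbounded quantifiers and the set-existence axioms of $\CZF$; this is precisely the role of the new machine model. I would then prove soundness: every axiom of $\CZF$ is realised at every node and realisability is closed under the rules of $\IQC$, so every theorem of $\CZF$ is realised at $r$.

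Finally I would carry out the de Jongh-style step: define a substitution $\sigma$ sending each predicate letter $P_i(\bar x)$ to a set-theoretic formula $A_i(\bar x)$ whose realisability profile across the tree mirrors the Beth valuation of $P_i$, using an encoding of $\mathcal{B}$ as a set inside the realisability universe. The key lemma — and the main obstacle — is a transfer (truth) lemma asserting that for every node $n$ we have $n \Vdash \sigma(\varphi)$ if and only if $n$ forces $\varphi$ in $\mathcal{B}$. Establishing this for implication and for the quantifiers is the delicate part, because one must match the realisability meaning of $\to$ and $\forall$ (with its covering and search components) against the Beth clauses and ensure that the first-order domains line up correctly. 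Granting the lemma, $r \nVdash \sigma(\varphi)$ since $r$ refutes $\varphi$, and then soundness yields $\CZF \nvdash \sigma(\varphi)$, completing the proof. I expect the quantifier case of the transfer lemma, together with verifying that all set-theoretic axioms remain realised in the presence of the Beth parameter, to be the technically hardest points.
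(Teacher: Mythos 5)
Your overall architecture matches the paper's: a canonical Beth model over the binary tree refuting $\varphi$ at the root, a node-parametrised SRM-realisability whose clauses for $\vee$ and $\exists$ incorporate the bar condition, soundness for $\CZF$, and a transfer lemma linking realisability of the substituted formula at a node to Beth forcing. But there is a genuine gap at the point you pass over with ``a set-theoretic formula $A_i(\bar x)$ whose realisability profile across the tree mirrors the Beth valuation of $P_i$, using an encoding of $\mathcal{B}$ as a set''. A set encoding of $\mathcal{B}$ is available as a parameter uniformly at every node, so it cannot make the realisability of $A_i$ \emph{vary} with the node; the node-dependence has to enter through the realisability relation itself. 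The paper achieves this with a \emph{system of oracles} $(G_v)_{v\in P}$ growing along the tree, and takes $\phi_i(\bar y) := \forall x(\psi(x,\seq{i,\bar y}) \vee \neg\psi(x,\seq{i,\bar y}))$, where $\psi$ is a negative formula expressing $D(x,z)=1$ for a specially constructed predicate $D$. Realising $\phi_i(\bar b)$ at $v$ then amounts to deciding the column $D_{\seq{i,\bar b}}$, and $G_v$ supplies exactly those columns whose atoms are forced at $v$ in the Beth model.

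The second missing ingredient is what makes the ``only if'' direction of the atomic case true: one must rule out that some realiser decides $D_{\seq{i,\bar b}}$ at a node where the oracle withholds that column, perhaps by exploiting the other columns it does have access to. This is precisely the content of the paper's generalisation of the Kleene--Post theorem to $\SRM^+$ (there is a $D$ such that $D_z$ is not $\SRM^+$-computable in $D^z$), proved by a transfinite diagonalisation against all programs and all columns; nothing in your outline supplies this independence. Relatedly, your diagnosis that the quantifier case of the transfer lemma is the hardest point is off: with the constant domain $\omega$ of the universal Beth model and a second oracle $F_v$ handing the realiser the Beth model's least witnesses for existentials and disjunctions, the inductive cases are routine, and the atomic case, resting on the degree-theoretic theorem, is where the work lies. (A further technical point you omit: the machines must be restricted to $\LL$ so that $D$ is expressible in the language of set theory without the auxiliary global well-order symbol, as otherwise $\psi$ is not a legitimate substitution instance.)
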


We prove this result by developing a realisability semantics for $\CZF$ based on a new model of transfinite computation, the so-called \textit{Set Register Machines} ($\SRM$s). Related notions of realisability had earlier been studied by \citet{Rathjen2006} and \citet{Tharp1971}. Our main result is obtained by adapting a technique that \citet{vanOosten1991} had developed for Heyting arithmetic: we combine the resulting notion of $\SRM$-realisability with Beth semantics to obtain a model of $\CZF$ that matches logical truth in a universal Beth model.

\citet{CarlGaleottiPassmann2020} gave a first proof-theoretic application of transfinite computability and provided a realisability interpretation for (infinitary) $\IKP$ set theory using OTMs. In particular, they proved that the propositional admissible rules of $\IKP$ are exactly the admissible rules of intuitionistic propositional logic. On the way to proving our main result, we will prove the same result for $\CZF$. Our motivation for introducing SRMs instead of working with OTMs is that the former are easier adapted for realising stronger set theories than $\IKP$. This work is thus another fruitful application of techniques of transfinite computability to proof-theoretic questions.

\subsection*{Overview}
After recalling some preliminaries in \cref{Section: Preliminaries}, we will begin, in \cref{Section: Set Register Machines}, with introducing our new notion of transfinite machines, the so-called \textit{set register machines} ($\SRM$s). The main result of this section will be a generalisation of a classical result by Kleene and Post about the existence of mutually irreducible degrees of computability. In \cref{Section: Realisability}, we introduce a realisability semantics based on $\SRM$s and show that (a certain extension of) these machines allows to realise $\CZF$ set theory. It also serves as a preparation for \cref{Section: Beth Realisability Models}, in which we will combine our realisability semantics with Beth models to prove our main result. 

\section{Preliminaries}
\label{Section: Preliminaries}

\subsection{Constructive Set Theory} 

We will be concerned with constructive Zermelo-Fraenkel set theory, $\CZF$, and now recall its definition and some basic facts. First, recall the axiom schemes of \textit{strong collection},
$$
    \forall a [ \forall x \in a \exists y \phi(x,y) \rightarrow \exists b (\forall x \in a \exists y \in b \phi(x,y) \wedge \forall y \in b \exists x \in a \phi(x,y))],
$$
for all formulas $\phi$, in which $b$ is not free, and \textit{subset collection},
$$
    \forall a \forall b \exists c \forall u [\forall x \in a \exists y \in b \phi(x,y,u) \rightarrow \exists d \in c (\forall x \in a \exists y \in d \phi(x,y,u) \wedge \forall y \in d \exists x \in a \phi(x,y,u)) ],
$$
for all formulas $\phi$, in which $c$ is not free. By $\Delta_0$-separation we denote the restriction of the separation schema to $\Delta_0$-formulas.

\begin{definition}
    Constructive Zermelo-Fraenkel Set Theory, $\CZF$, is based on intuitionistic first-order logic in the language of set theory and consists of the following axioms and axiom schemes: extensionality, pairing, union, empty set, infinity, $\Delta_0$-separation, strong collection, subset collection, and $\in$-induction.
\end{definition}

We denote $\CZF$ without the subset collection schema by $\CZF^-$. The exponentiation axiom states that function sets exists:
$$
    \forall a \forall b \exists c \forall f (f \in c \leftrightarrow \text{``$f$ is a function from $a$ to $b$''}).
$$
The following is well known (consult, e.g., \citet{AczelRathjen2001}).

\begin{fact}
    \label{Fact: PowerSet implies SubsetCollection}
    In $\CZF^-$, the power set axiom implies the subset collection axiom. Moreover, in $\CZF^-$, the subset collection scheme implies the exponentiation axiom.
\end{fact}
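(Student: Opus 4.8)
The plan is to treat the two implications separately, in each case reducing the desired schema or axiom to a single application of collection together with $\Delta_0$-separation, all of which are available in $\CZF^-$.

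For the first implication (power set implies subset collection), given sets $a$ and $b$ I would take the witnessing set to be $c = \Pow(b)$. Fix a parameter $u$ and assume the antecedent $\forall x \in a \exists y \in b\, \phi(x,y,u)$. The key move is to apply strong collection not to $\phi$ directly but to the formula $\psi(x,y) := (y \in b \wedge \phi(x,y,u))$: the antecedent gives $\forall x \in a \exists y\, \psi(x,y)$, so strong collection produces a set $d$ with $\forall x \in a \exists y \in d\, \psi(x,y)$ and $\forall y \in d \exists x \in a\, \psi(x,y)$. Since $\psi(x,y)$ entails $y \in b$, the second conjunct forces $d \subseteq b$, whence $d \in \Pow(b) = c$; and since $\psi$ entails $\phi$, this $d$ is exactly the witness demanded by subset collection. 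So the only real point here is that folding the side condition $y \in b$ into the matrix of strong collection is what guarantees that the collected witness lands inside the power set.

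For the second implication (subset collection implies exponentiation), I would apply subset collection with domain $a$ and \emph{codomain} $a \times b$ to the formula $\phi(x, z, u) := \exists y \in b\, (z = \langle x, y \rangle \wedge \langle x, y \rangle \in u)$, obtaining a set $C$. Taking the codomain to be $a \times b$ is essential, because then the witnesses $d$ consist of ordered pairs, that is, of relations, rather than of bare elements of $b$. Now let $f$ be any function from $a$ to $b$ and instantiate $u := f$: totality of $f$ makes the antecedent hold, so subset collection yields $d \in C$ with $d \subseteq f$, with $d \subseteq a \times b$, and with $d$ total on $a$. A total subrelation of a function coincides with that function, so $f = d \in C$; hence $C$ contains every function from $a$ to $b$. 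Finally, since ``$f$ is a function from $a$ to $b$'' is a $\Delta_0$ property (all its quantifiers are bounded, once pairing is expressed in bounded form), I would use $\Delta_0$-separation to form $\Set{f \in C}{f \text{ is a function from } a \text{ to } b}$, which is precisely the desired exponential.

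I expect the main obstacle in both parts to be the bookkeeping that forces the collected witnesses to have the correct shape. In the first part this is the observation that strong collection alone need not return a subset of $b$, which the $\psi$-trick repairs; in the second part it is the twin facts that one must collect over $a \times b$ to retain the pairing information and that a total relation contained in a function is equal to it. Once these are in place, the concluding appeals to $\Delta_0$-separation are entirely routine, since every relevant defining property is bounded.
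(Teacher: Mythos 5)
Your proof is correct. The paper gives no argument for this Fact at all---it is stated as well known, with a pointer to Aczel and Rathjen's notes---and your two reductions are exactly the standard ones: for the first implication, taking $c = \Pow(b)$ and applying strong collection to $y \in b \wedge \phi(x,y,u)$ so that the collected set is forced into $\Pow(b)$; for the second, collecting over $a \times b$ with the graph formula so that the witness $d$ is a total subrelation of $f$ (hence equal to $f$), and then cutting down with $\Delta_0$-separation.
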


\subsection{Logics \& De Jongh's Theorem} 

Given a theory $T$, based on intuitionistic logic, the logically valid principles of $T$ may exceed those valid in intuitionistic logic. The most well-known example of this phenomen is probably the following consequence of what is known as \textit{Diaconescu's theorem} (see \citet{Diaconescu1975,GoodmanMyhill1978}): $\IZF$ extended with the axiom of choice implies the law of excluded middle, i.e. $\IZF + \AC \vdash \phi \vee \neg \phi$ for all set-theoretic formulas $\phi$. This suggests that it is incorrect to say that the logic of $\IZF + \AC$ is intuitionistic: after all, the law of excluded middle is valid! For this reason, we define the propositional and first-order logics of a theory $T$ as follows, in terms of translations. 

\begin{definition}
    Let $T$ be a theory in a language $\mathcal{L}_T$. A \textit{propositional translation} is a function $\tau$ assigning $\mathcal{L}_T$-sentences to propositional formulas such that:
    \begin{enumerate}
        \item $\tau(p)$ is an $\mathcal{L}_T$-sentence for every propositional letter $p$,
        \item $\tau(\bot) = \bot$, and,
        \item $\tau(A \circ B) = \tau(A) \circ \tau(B)$ for $\circ \in \set{\wedge,\vee,\rightarrow}$.
    \end{enumerate}
    As customary with translations, we will often write $A^\tau$ instead of $\tau(A)$.
\end{definition}

\begin{definition}
    The \textit{propositional logic of $T$}, $\PL(T)$, consists of all propositional formulas $A$ such that $T \vdash A^\tau$ for all propositional translations $\tau$.
\end{definition}

A result concerning the first-order logic of Heyting arithmetic was proved by de Jongh in his doctoral dissertation \cite{DeJonghUnpublished}. We denote intuitionistic propositional logic by $\IPC$ and intuitionistic first-order logic by $\IQC$.

\begin{theorem}[de Jongh, 1970]
    The propositional logic of Heyting arithmetic is intuitionistic propositional logic, $\PL(\HA) = \IPC$.
\end{theorem}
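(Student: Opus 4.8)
The plan is to prove the two inclusions separately. The inclusion $\IPC \subseteq \PL(\HA)$ is the easy, soundness direction: I would argue by induction on a derivation of $A$ in intuitionistic propositional logic that $\HA \vdash A^\tau$ holds for every propositional translation $\tau$, using only that $\HA$ is based on intuitionistic first-order logic and that translations commute with the connectives and fix $\bot$. Since intuitionistic logic is closed under arbitrary substitution of sentences for propositional letters, every axiom and every application of a rule transfers under $\tau$, so each $\IPC$-theorem becomes an $\HA$-theorem.

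The real content is the converse, $\PL(\HA) \subseteq \IPC$, which I would establish in contrapositive form: for every propositional formula $A \notin \IPC$ I must exhibit a single translation $\tau$ with $\HA \not\vdash A^\tau$. First I would invoke the completeness of $\IPC$ with respect to finite rooted Kripke models to fix a finite model $\mathcal{K} = (W, \leq, \Vdash)$ with root $r$ such that $\mathcal{K}, r \not\Vdash A$. The task then reduces to encoding this finite counter-model inside the provability structure of $\HA$: I need arithmetical sentences $\tau(p)$, one for each propositional letter occurring in $A$, whose behaviour under $\HA$-provability, and under the extensions attached to the nodes of $\mathcal{K}$, faithfully reproduces the forcing of $p$.

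To manufacture these sentences I would follow van Oosten's method of combining Kleene realisability with Beth (equivalently Kripke) semantics. Concretely, I would build a Beth model over $\HA$ whose worlds carry both a model of $\HA$ and a realisability relation, arranged so that (i) the whole structure validates $\HA$, whence anything $\HA$ proves is forced at the root, and (ii) at the propositional level the forcing of $\tau(p)$ at each world $w$ matches $w \Vdash p$ in $\mathcal{K}$. The sentences $\tau(p)$ themselves would be assembled from $\Sigma_1$ (halting-type) statements via the recursion theorem, so that realisers exist or fail to exist exactly along the nodes prescribed by $\mathcal{K}$. A structural induction, i.e.\ a truth lemma, would then show that for every subformula $B$ of $A$ and every world $w$ the Beth model forces $B^\tau$ at $w$ iff $\mathcal{K}, w \Vdash B$. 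Instantiating this at the root with $B = A$ produces a world at which $A^\tau$ is not forced; since the model validates $\HA$, this yields $\HA \not\vdash A^\tau$, as required.

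The main obstacle is clause (ii): designing the sentences $\tau(p)$ and proving the truth lemma so that the realisability content tracks the Kripke forcing exactly, in particular at implications and under the persistence conditions, while simultaneously keeping the ambient structure a genuine Beth model of $\HA$. This glueing of realisability with Beth forcing, so that soundness for $\HA$ and faithfulness of the propositional encoding hold at once, is the delicate heart of the argument, and it is precisely the technique this paper aims to lift from $\HA$ to $\CZF$.
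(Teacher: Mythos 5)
This theorem is not proved in the paper: it is stated as a classical result and attributed, with a citation, to de Jongh's doctoral dissertation, so there is no in-paper argument to compare your attempt against. Judged on its own terms, your soundness direction $\IPC \subseteq \PL(\HA)$ is fine, and your overall architecture for the converse --- fix a finite rooted Kripke countermodel for $A$, build arithmetical sentences $\tau(p)$ whose forcing pattern in a semantic structure validating $\HA$ reproduces the countermodel, and conclude $\HA \not\vdash A^\tau$ by a truth lemma plus soundness --- is indeed the common skeleton of every known proof (de Jongh's original realisability-plus-Kripke argument, Smoryński's gluing of Kripke models of $\mathrm{PA}$, and van Oosten's Beth-realisability method, the last of which is the one this paper lifts to $\CZF$).

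The gap is that the proposal stops exactly where the theorem begins. The entire mathematical content of de Jongh's theorem is the construction of the substitution instances $\tau(p)$ and the verification of the truth lemma, and your text replaces both with a description of what ``would'' be done (``assembled from $\Sigma_1$ statements via the recursion theorem, so that realisers exist or fail to exist exactly along the nodes prescribed by $\mathcal{K}$''), while explicitly flagging this as the unresolved ``delicate heart.'' Nothing is said about how persistence and the implication clause are to be handled, nor about why the chosen $\Sigma_1$-sentences are mutually independent in the strong sense needed (the analogue of the Kleene--Post construction that this paper develops as \Cref{Theorem: D for SRM^+} precisely to make the base case of such a truth lemma go through). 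Without that construction the argument is a plan, not a proof. Two further remarks: mixing a per-formula finite Kripke countermodel with a single Beth-realisability structure is harmless but should be reconciled (van Oosten works with one universal Beth model for the logic, as this paper does via \Cref{Theorem: Canonical fallible Beth model}); and for the purely propositional statement the full realisability machinery is overkill --- Smoryński's method of gluing classical models of $\mathrm{PA}$ along a finite tree, with independent $\Sigma_1$-substitutions, gives a shorter self-contained proof.
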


This result is now known as \textit{de Jongh's theorem}, and, in general, we say that a theory $T$ \textit{satisfies de Jongh's theorem} whenever $\PL(T) = \IPC$.

\begin{definition}
    Let $T$ be a theory in a language $\mathcal{L}_T$. A \textit{first-order translation} is a function $\tau$ assigning $\mathcal{L}_T$-formulas to propositional formulas such that:
    \begin{enumerate}
        \item $\tau(R(x_1,\dots,x_n))$ is an $\mathcal{L}_T$-formula $\phi$ with free variables among $x_1,\dots,x_n$,
        \item $\tau(\bot) = \bot$,
        \item $\tau(A \circ B) = \tau(A) \circ \tau(B)$ for $\circ \in \set{\wedge,\vee,\rightarrow}$, and,
        \item $\tau(\mathcal{Q} x A(x)) = \mathcal{Q} x \tau(A(x))$ for $\mathcal{Q} \in \set{\forall,\exists}$.
    \end{enumerate}
\end{definition}

\begin{definition}
    The \textit{first-order logic of $T$}, $\QL(T)$, consists of all first-order formulas $A$ such that $T \vdash A^\tau$ for all first-order translations $\tau$.
\end{definition}

Since de Jongh's initial work, many notable results have been obtained in this area. \citet{Leivant1979} showed that $\QL(\HA) = \IQC$; \citet{vanOosten1991} gave a semantic proof of this fact (the idea of his construction will reappear in our construction in \Cref{Section: Beth Realisability Models}). De Jongh, Verbrugge and Visser \cite{deJonghVerbruggeVisser2011} consider a generalised version of de Jongh's theorem: given a (propositional or first-order) logic $J$ and a theory $T$, we can consider the theory $T(J)$ obtained by closing $T$ under $J$. We then say that $T$ satisfies the \textit{de Jongh property for $J$} if $\PL(T(J)) = J$ (or, $\QL(T(J)) = J$ if $J$ is a first-order logic).

The main negative result concerning logics of set theory is due to \citet{FriedmanScedrov1986}, and was also mentioned in the introduction. Here is a reformulation based on the terminology just introduced.

\begin{theorem}[Friedman \& Ščedrov, 1986]
    Let $T$ be a set theory based on intuitionistic first-order logic that contains the axioms of extensionality, pairing and (finite) union, as well as the separation scheme. Then, $\IQC \subsetneq \QL(T)$.
\end{theorem}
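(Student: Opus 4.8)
The plan is to exhibit a single first-order formula $A$ with $A \notin \IQC$ but $T \vdash A^\tau$ for every first-order translation $\tau$; this immediately yields $\IQC \subsetneq \QL(T)$. In principle one could argue semantically, by analysing the logic validated in the canonical Kripke model of $T$, but following Friedman and Ščedrov I would instead give a direct, uniform derivation inside $T$ that succeeds no matter which set-theoretic formula $\phi = \tau(R)$ is substituted for each predicate symbol $R$. The whole difficulty is concentrated in this uniformity: the derivation must work for arbitrary $\phi$, including $\phi$ carrying unrestricted quantifiers, while using only extensionality, pairing, union and the \emph{full} separation scheme.

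The engine of the argument is the construction of generic truth-value sets. Since the first-order domain is nonempty, separation yields $\emptyset$ and pairing yields $\{\emptyset\}$, and one checks in $T$ that $\emptyset \neq \{\emptyset\}$ (otherwise $\emptyset \in \emptyset$). For an arbitrary sentence $\sigma$ (with parameters) I would set $\hat\sigma = \{ z \in \{\emptyset\} : \sigma \}$. Here \emph{full} separation is essential, and this is precisely the point at which the argument breaks down for $\CZF$, whose separation is restricted to $\Delta_0$-formulas: $\hat\sigma$ exists for every $\sigma$, however complex. Its defining properties are $\hat\sigma = \{\emptyset\} \leftrightarrow \sigma$ and $\hat\sigma = \emptyset \leftrightarrow \neg\sigma$, and, by extensionality, $\hat\sigma = \hat\rho \leftrightarrow (\sigma \leftrightarrow \rho)$. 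Thus every translated predicate can be \emph{internalised} as a subset of the fixed two-element set $\{\emptyset,\{\emptyset\}\}$, replacing reasoning about arbitrary formulas by reasoning, via extensionality, about such subsets.

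With this machinery in hand I would fix a non-intuitionistic schema $A$ — a quantifier-shift/choice-flavoured principle that is forced once the predicates of $A$ are internalised as truth-value sets and extensionality is applied to them — and establish $T \vdash A^\tau$ by a case analysis carried out entirely through these sets rather than through the unrestricted object quantifiers. Separately, I would verify $A \notin \IQC$ by exhibiting a finite Kripke countermodel. Combining the two gives $A \in \QL(T) \setminus \IQC$, and hence the theorem.

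The main obstacle is the choice and the uniform derivation of $A$. Because the quantifiers of $A^\tau$ range over the entire universe of sets, naive separation cannot be used to collect witnesses, and $T$ offers neither collection, nor infinity, nor $\in$-induction to fall back on; each step must instead be routed through the fixed finite sets and the equalities that extensionality decides among them. Producing a genuinely non-intuitionistic $A$ whose translation is nonetheless provable \emph{uniformly in} $\phi$ — and confirming that passing to $\Delta_0$-separation is exactly what blocks the same derivation, consistent with the main result for $\CZF$ — is the delicate heart of the proof.
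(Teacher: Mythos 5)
The paper does not prove this statement at all: it is quoted as an external result of Friedman and Ščedrov (1986), so there is no in-paper proof to compare against. Measured against the original argument, your sketch does correctly isolate the engine — full separation gives, for every sentence $\sigma$, the truth-value set $\{z \in \{\emptyset\} : \sigma\}$, extensionality decides the relevant identities among such subsets of $\{\emptyset\}$, and the restriction to $\Delta_0$-separation in $\CZF$ is exactly what disables the construction. Your auxiliary claims about $\hat\sigma$ are all correct.

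However, there is a genuine gap, and it sits exactly where the theorem's content lies: you never exhibit the formula $A$. The theorem is an existence claim — there is some $A \in \QL(T) \setminus \IQC$ — and your proposal replaces the witness by the placeholder ``a quantifier-shift/choice-flavoured principle that is forced once the predicates of $A$ are internalised.'' Neither of the two obligations attached to a concrete $A$ is discharged: there is no uniform derivation of $A^\tau$ in $T$ (and this derivation is delicate, since the truth-value sets $\hat\sigma$ only convert each translated predicate into a statement about subsets of $\{\emptyset\}$; one still has to find a genuinely non-intuitionistic quantifier combination that this conversion makes provable, which is the hard combinatorial core of Friedman and Ščedrov's paper), and there is no Kripke countermodel showing $A \notin \IQC$. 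As written, the proposal is a correct description of the known proof strategy with its central step declared rather than performed, so it does not yet constitute a proof.
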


\Citet{Passmann2020} showed that $\PL(\IZF) = \IPC$, and consequently, $\PL(\CZF) = \IPC$. \citet{IemhoffPassmann2021} analysed the logical structure of $\IKP$ and proved, among other things, that $\QL(\IKP) = \IQC$.

\subsection{Admissible Rules} 

We can further generalise our analysis of the logical structure of a given theory by not only considering its logically valid principles but also by determining its admissible rules.

\begin{definition}
    Let $T$ be a theory in a language $\mathcal{L}_T$, and let $A$ and $B$ be propositional formulas. We say that a propositional rule $A / B$ is \textit{admissible in $T$}, written $A \vsim_T B$, if and only if $T \vdash A^\tau$ implies $T \vdash B^\tau$ for all propositional translations $\tau$.
\end{definition}

We say that a theory $T$ has the disjunction property if $T \vdash \phi \vee \psi$ implies $T \vdash \phi$ or $T \vdash \psi$. The \textit{restricted Visser's rules} $\set{V_n}_{n < \omega}$ are defined as follows and play a special role for admissibility (\citet{Iemhoff2001} proved that they form a so-called basis of the admissible rules of intuitionistic propositional logic):
\begin{mathpar}
    \inferrule{
            \left( \bigwedge_{i = 1}^n (p_i \rightarrow q_i) \right) \rightarrow (p_{n+1} \vee p_{n+2})
        }
        {
            \bigvee_{j = 1}^{n+2}\left( \bigwedge_{i = 1}^n (p_i \rightarrow q_i) \rightarrow p_j \right) 
        }
\end{mathpar}
Denote by $V_n^a$ the antecedent and by $V_n^c$ the consequent of the rule.
We will make use of the following result of \citet{Iemhoff2005} to determine admissible rules. 

\begin{theorem}[Iemhoff, {\cite[Theorem 3.9, Corollary 3.10]{Iemhoff2005}}]
    \label{Theorem: Iemhoff}
    If the restricted Visser's rules are propositional admissible for a theory $T$ with the disjunction property, then the propositional admissible rules of $T$ are exactly the propositional admissible rules of intuitionistic propositional logic, ${\vsim_T} = {\vsim_\IPC}$.
\end{theorem}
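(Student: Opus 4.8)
The plan is to prove $\vsim_T = \vsim_\IPC$ by establishing the two inclusions separately, with the hypotheses playing complementary roles: admissibility of the Visser rules yields $\vsim_\IPC \subseteq \vsim_T$, while the disjunction property is what powers the converse.

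For $\vsim_\IPC \subseteq \vsim_T$, I would first observe that $\vsim_T$ is a \emph{structural consequence relation} extending $\IPC$. Unwinding the definition, $\vsim_T$ is reflexive and transitive, and it is closed under substitution because, for a propositional substitution $\rho$ and a translation $\tau$, the composite $\tau \circ \rho$ is again a translation, so $A \vsim_T B$ gives $\rho A \vsim_T \rho B$. Moreover $\vsim_T$ contains every $\IPC$-derivable rule: if $\IPC \vdash A \to B$ then $(A \to B)^\tau = A^\tau \to B^\tau$ is a theorem of $T$ for every $\tau$ (as $T$ is based on intuitionistic logic and proves all translated $\IPC$-theorems), whence $T \vdash A^\tau$ implies $T \vdash B^\tau$. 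By hypothesis $\vsim_T$ also contains each Visser rule $V_n^a / V_n^c$. Since, by the basis theorem of \citet{Iemhoff2001}, $\vsim_\IPC$ is the smallest structural consequence relation extending $\IPC$ and closed under the Visser rules, we conclude $\vsim_\IPC \subseteq \vsim_T$. Note that this half uses neither the disjunction property nor anything about $T$ beyond $T \supseteq \IPC$.

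For the converse $\vsim_T \subseteq \vsim_\IPC$ I would invoke the \emph{projective approximations} underlying \citet{Iemhoff2001} (following Ghilardi): every formula $A$ has a finite set $\Pi(A)$ of projective formulas with $P \vdash_\IPC A$ for each $P \in \Pi(A)$, such that $A \vsim_\IPC B$ holds if and only if $\IPC \vdash P \to B$ for every $P \in \Pi(A)$; and each projective $P$ carries a unifier $\sigma_P$ with $\IPC \vdash \sigma_P P$ and $\IPC \vdash P \to (\sigma_P C \leftrightarrow C)$ for all $C$. Assume $A \vsim_T B$ and fix $P \in \Pi(A)$. From $\IPC \vdash \sigma_P P$ and $\IPC \vdash P \to A$ we obtain $\IPC \vdash \sigma_P A$, so $T \vdash (\sigma_P A)^{\tau_0} = A^{\tau_0 \circ \sigma_P}$ for every translation $\tau_0$; applying $A \vsim_T B$ to the translation $\tau_0 \circ \sigma_P$ gives $T \vdash (\sigma_P B)^{\tau_0}$ for all $\tau_0$, i.e. $\sigma_P B \in \PL(T)$. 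Once we know $\PL(T) = \IPC$ this yields $\IPC \vdash \sigma_P B$, and combining with $\IPC \vdash P \to (\sigma_P B \leftrightarrow B)$ gives $\IPC \vdash P \to B$. As $P$ was arbitrary, $A \vsim_\IPC B$.

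This reduces the converse inclusion to de Jongh's theorem for $T$, namely $\PL(T) = \IPC$, and I expect the main obstacle to lie exactly here. The inclusion $\IPC \subseteq \PL(T)$ is immediate; for $\PL(T) \subseteq \IPC$ one takes $A \in \PL(T)$, uses $A \vsim_\IPC \bigvee \Pi(A)$ together with the inclusion $\vsim_\IPC \subseteq \vsim_T$ just established to get $\bigvee \Pi(A) \in \PL(T)$, and must then extract $\IPC \vdash \bigvee \Pi(A)$, whence $\IPC \vdash A$. The disjunction property delivers, for each translation $\tau$, \emph{some} disjunct that $T$ proves, but this choice is not uniform in $\tau$, and closing this gap is not a formal consequence of the disjunction property alone: as in \citet{vanOosten1991}'s treatment of Heyting arithmetic, it calls for a genuine model construction in which $T$-provability reflects forcing in a universal intuitionistic model. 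In the present paper this is precisely what the Beth realisability models of \Cref{Section: Beth Realisability Models} provide, so that once the disjunction property and the admissibility of the Visser rules for $\CZF$ are verified, \Cref{Theorem: Iemhoff} applies and gives $\vsim_\CZF = \vsim_\IPC$.
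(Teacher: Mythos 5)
This theorem enters the paper as a black box imported from \citet{Iemhoff2005}; the paper offers no proof of it, so there is no in-paper argument to compare you with, and I will judge your reconstruction on its own terms. Your first inclusion, $\vsim_\IPC \subseteq \vsim_T$, is correct and is exactly how the basis theorem is meant to be used: $\vsim_T$ is a structural single-conclusion consequence relation containing $\vdash_\IPC$ (because $T$, being based on intuitionistic logic, proves every translation of an $\IPC$-theorem) and, by hypothesis, the rules $V_n$; since these form a basis for $\vsim_\IPC$ by \citet{Iemhoff2001}, the inclusion follows. For the converse, your detour through projective approximations is unnecessary: once you aim at $\PL(T) \subseteq \IPC$, a direct argument suffices. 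If $A \vsim_T B$ and $\IPC \vdash \sigma A$ for a substitution $\sigma$, then $T \vdash A^{\tau \circ \sigma}$ for every translation $\tau$, hence $T \vdash (\sigma B)^{\tau}$ for every $\tau$, i.e.\ $\sigma B \in \PL(T) = \IPC$, which is exactly what $A \vsim_\IPC B$ asks for.

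The substantive issue is the one you flag at the end, and you are right to flag it: the inclusion $\vsim_T \subseteq \vsim_\IPC$ genuinely needs $\PL(T) = \IPC$, and this is not a consequence of the disjunction property together with admissibility of the restricted Visser rules. Any complete classical theory witnesses this for the literal statement: it has the disjunction property as defined here, it admits every $V_n$ because $V_n^a \rightarrow V_n^c$ is a classical tautology, and yet $(p \rightarrow p) \vsim_T (q \vee \neg q)$ holds while $(p \rightarrow p) \vsim_\IPC (q \vee \neg q)$ fails. So the quoted statement tacitly presupposes de Jongh's theorem for $T$; in the intended application this is harmless, since $\PL(\CZF) = \IPC$ is recorded earlier in the paper via \citet{Passmann2020}, but a self-contained write-up should add $\PL(T) = \IPC$ to the hypotheses rather than try to derive it. Your attempt to squeeze it out of $\bigvee \Pi(A) \in \PL(T)$ fails for exactly the reason you give: the disjunction property selects a disjunct per translation, with no uniformity across translations, so nothing forces a single $P \in \Pi(A)$ to be provable under all of them.
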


Visser \cite{Visser1999} proved that the propositional admissible rules of Heyting Arithmetic $\HA$ are exactly the admissible rules of intuitionistic propositional logic $\IPC$. Using realisability techniques, \citet{CarlGaleottiPassmann2020} determined the propositional admissible rules of $\IKP$ to be exactly the admissible rules of propositional intuitionistic logic. \citet{IemhoffPassmannUnpublished2019} proved that the propositional admissible rules of $\CZF_{\mathrm{ER}}$ and $\IZF_{\mathrm{R}}$ are the admissible rules of intuitionistic propositional logic by using a modification of the so-called blended models (earlier introduced by Passmann \cite{Passmann2020}).\footnote{To obtain $\CZF_{\mathrm{ER}}$ and $\IZF_{\mathrm{R}}$, replace subset collection and (strong) collection by exponentiation and replacement, respectively.} It is possible to consider first-order admissible rules; \citet{vandenBergMoerdijk2012} show that certain constructive principles are first-order admissible rules of $\CZF$ (calling them \textit{derived} rules).

\section{Set Register Machines}
\label{Section: Set Register Machines}

\subsection{Definitions \& Basic Properties}

Let us begin with some intuition for set register machines ($\SRM$s). A set register machine has a finite set of registers $R_0,\dots,R_n$ on which it conducts computations. However, the registers do not contain natural numbers (as in the case of register machines) or ordinal numbers (as in the case of ordinal register or Turing machines) but rather arbitrary sets. Accordingly, $\SRM$s use a different set of operations: for example, adding a set contained in a register to another register, or removing a member of a set contained in a certain register. 

We assume that $<_\tau$ is a global well-ordering such that $\rank(x) < \rank(y)$ implies $x <_\tau y$.\footnote{Whenever $<_\tau$ is a global well-ordering, we can assume that this is the case by defining $x <_\tau' y$ if and only if $\rank(x) < \rank(y)$ or $\rank(x) = \rank(y)$ and $x <_\tau y$. Note that $<_\tau'$ is again a well-order.} This means that we are working under the assumption of the global axiom of choice and extend our set-theoretical language with the symbol $<_\tau$. Note that this extended theory is conservative over $\ZFC$ (see \citet[72-73]{Fraenkel1973}). The reason for using this theory as our meta-theory is that we want SRM-computations to be deterministic, and assuming a global well-ordering is a convenient way to achieve this. For a discussion of alternatives see \Cref{Remark: Alternatives for global well-ordering}.

We will now first define programs by giving the permissible operations, and then computations for set register machines. While defining the permissible operations, we will directly give an intuitive description of what the operation does. 

\begin{definition}
    \label{Definition: SRM program}
    A \textit{set register program} $p$ is a finite sequence $p = (p_0, \dots, p_{n-1})$, where each $p_i$ is one of the following commands:
    \begin{enumerate}
        \item ``$R_i := \emptyset$'': replace the content of the $i$th register with the empty set.
        \item ``$\mathtt{ADD}(i,j)$'': replace the content of the $j$th register with $R_j \cup \{ R_i \}$.
        \item ``$\mathtt{COPY}(i,j)$'': replace the content of the $j$th register with $R_i$.
        \item ``$\mathtt{TAKE}(i,j)$'': replace the content of the $j$th register with the $<_\tau$-least set contained in $R_i$, if $R_i$ is non-empty.
        \item ``$\mathtt{REMOVE}(i,j)$'': replace the content of the $j$th register with the set $R_j \setminus \{ R_i \}$.
        \item ``$\mathtt{IF} \ R_i = \emptyset \ \mathtt{THEN \ GO \ TO} \ k$'': check whether the $i$th register is empty; if so, move to program line $k$, and, if not, move to the next line.
        \item ``$\mathtt{IF} \ R_i \in R_j \ \mathtt{THEN \ GO \ TO} \ k$'': check whether $R_i \in R_j$; if so, move to program line $k$, and, if not, move to the next line.
        \item ``$\mathtt{POW}(i,j)$'': replace the content of the $j$th register with the power set of $R_i$.
    \end{enumerate}
\end{definition}

\begin{definition}
    \label{Definition: SRM computation}
    Let $p$ be a set register program and $k < \omega$ be the highest register index appearing in $p$. A \textit{configuration of $p$} is a sequence $(l,r_0,\dots,r_k)$ consisting of the active program line $l < \omega$ and the current content $r_i$ of register $R_i$. If $c = (l,r_0,\dots,r_k)$ is a configuration of $p$, then its successor configuration $c^+ = (l^+,r_0^+,\dots,r_k^+)$ is obtained as follows:
    \begin{enumerate}
        \item If $p_l$ is $\text{``} R_i := \emptyset \text{''}$, then let $r_i^+ = \emptyset$, $r_n^+ = r_n$ for $n \neq i$, and $l^+ = l + 1$.
        \item If $p_l$ is $\text{``} \mathtt{ADD}(i,j) \text{''}$, then let $r_j^+ = r_j \cup \set{r_i}$, $r_n^+ = r_n$ for $n \neq j$, and $l^+ = l + 1$.
        \item If $p_l$ is $\text{``} \mathtt{COPY}(i,j) \text{''}$, then let $r_j^+ = r_i$, $r_n^+ = r_n$ for $n \neq j$, and $l^+ = l + 1$.
        \item If $p_l$ is $\text{``} \mathtt{TAKE}(i,j) \text{''}$, then let $r_j^+$ be the $<_\tau$-minimal element of $r_i$ (if that exists; if $r_i = \emptyset$, then $r_j^+ = r_j$), $r_n^+ = r_n$ for $n \neq j$, and $l^+ = l + 1$.
        \item If $p_l$ is $\text{``} \mathtt{REMOVE}(i,j) \text{''}$, then let $r_j^+ = r_j \setminus \set{r_i}$, $r_n^+ = r_n$ for $n \neq j$, and $l^+ = l + 1$.
        \item If $p_l$ is $\text{``} \mathtt{IF} \ R_i = \emptyset \ \mathtt{THEN \ GO \ TO} \ m \text{''}$, then $r_i^+ = r_i$ for all $i \leq k$; and, if $r_i = \emptyset$, then $l^+ = m$; if $r_i \neq \emptyset$, then $l^+ = l + 1$.
        \item If $p_l$ is $\text{``} \mathtt{IF} \ R_i \in R_j \ \mathtt{THEN \ GO \ TO} \ m \text{''}$, then $r_i^+ = r_i$ for all $i \leq k$; and, if $r_i \in r_j$, then $l^+ = m$; if $r_i \notin r_j$, then $l^+ = l + 1$.
        \item If $p_l$ is ``$\mathtt{POW}(i,j)$'', then $r_j^+ = \Pow(r_i)$, $r_n^+ = r_n$ for all $n \neq i$, and $l^+ = l + 1$.
    \end{enumerate}
    A \textit{computation of $p$ with input $x_0,\dots,x_j$} is a sequence $d$ of ordinal length $\alpha + 1$ consisting of configurations of $p$ such that: 
    \begin{enumerate}
        \item $d_0 = (1,x_0, \dots, x_j,  \emptyset, \dots, \emptyset)$,
        \item if $\beta < \alpha$, then $d_{\beta + 1} = d_\beta^+$,
        \item if $\beta < \alpha$ is a limit, then $l_\beta = \liminf_{\gamma < \beta} l_\gamma$, and $r_\beta = \liminf_{\gamma < \beta} r_\gamma$, where the limes inferior of a sequence of sets is the set obtained from the limes inferior of the characteristic functions, and,
        \item $d_\alpha^+$ is undefined (i.e., $l_\alpha > m$). 
    \end{enumerate}
\end{definition}

The notion of computability obtained by restricting \Cref{Definition: SRM program,Definition: SRM computation} to clauses (i) -- (vii) will be referred to as $\SRM$; the full notion will be referred to as $\SRM^+$. In other words, $\SRM^+$ is obtained from $\SRM$ by adding the power set operation. We allow $\SRM$s and $\SRM^+$s to make use of finitely many set parameters which will be treated as additional input in a fixed register as specified in the program code.

\begin{remark}
    \label{Remark: Alternatives for global well-ordering}
    There are several alternatives for working with a global well-ordering function $<_\tau$: first, it is possible to develop a theory of non-deterministic SRMs, where the $\mathtt{TAKE}$-command takes an arbitrary set. Second, SRMs could work on well-ordered sets (i.e. sets equipped with a well-order). This approach is not useful for $\SRM^+$ as there is no canonical way in extending the well-ordering of a set to its power set (i.e. a certain degree of non-determinateness is introduced again). A third approach is to make computations dependent on a large enough well-ordering of some initial $V_\alpha$. Finally, one could work in the constructible universe $\mathrm{L}$ where we have a $\Sigma_1$-definable well-ordering $<_\mathrm{L}$. We will, in fact, consider this approach in \Cref{Subsection: Restricting to L} but for different reasons: for our main application, we need computations to be definable in the language of set theory without an additional symbol for the global well-ordering.
\end{remark}

\begin{definition}
    A function $f$ is \textit{$\SRM^{(+)}$-computable} if there is an $\SRM^{(+)}$-program $p$, possibly with parameters, which computes $f(x)$ on input $x$. A predicate is called $\SRM^{(+)}$-computable if its characteristic function is $\SRM^{(+)}$-computable.
\end{definition}

Note that every function with set-sized domain is $\SRM$-computable. Clearly, if a function or predicate is $\SRM$-computable, then it is also $\SRM^+$-computable. The converse does not hold: consider, for example, the power set operation.

\begin{proposition}
    Equality of sets is $\SRM$-computable.
\end{proposition}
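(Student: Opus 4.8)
The plan is to reduce the computation of set-equality to (a) the primitive membership test $\mathtt{IF}\ R_i \in R_j$ and (b) the ability to enumerate the elements of a set, and then to appeal to extensionality. By extensionality, $x = y$ holds if and only if $x \subseteq y$ and $y \subseteq x$, so it suffices to write an $\SRM$-program deciding the subset relation $x \subseteq y$; equality is then decided by running this routine twice with the arguments swapped and conjoining the outcomes (the original inputs are kept in dedicated registers and only ever copied into work registers, so both runs have access to $x$ and $y$). I will represent truth values as sets in the usual way, outputting $\set{\emptyset}$ for \enquote{true} and $\emptyset$ for \enquote{false}; both can be produced using the operation \enquote{$R := \emptyset$} together with $\ADD$.

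To decide $x \subseteq y$, I would keep $y$ fixed in a register $Y$ (set once by $\COPY$ and never altered) and place a working copy $W$ of $x$ in another register. The program then loops as follows: if $W = \emptyset$, the subset relation holds and we exit the loop reporting success; otherwise we use $\TAKE$ to move the $<_\tau$-least element $z$ of $W$ into a register $Z$, perform the test $\mathtt{IF}\ Z \in Y$, and branch accordingly --- if $z \notin y$ we halt immediately reporting failure, and if $z \in y$ we delete $z$ from the working copy via $\REMOVE(Z,W)$ and return to the top of the loop. (Unconditional jumps, where needed, are simulated by keeping a dedicated register permanently empty and testing it with clause (vi).) Since $Y$ is never modified, each membership test faithfully checks $z \in y$, so the loop reports success exactly when every element of $x$ lies in $y$.

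The delicate point --- and the one I expect to be the main obstacle --- is to verify that this loop behaves correctly at limit stages and in fact halts, since $x$ may be infinite. Writing $W_\beta$ for the content of $W$ after $\beta$ logical iterations, we have $W_0 = x$ and $W_{\beta+1} = W_\beta \setminus \set{z_\beta}$, where $z_\beta$ is the $<_\tau$-least remaining element; thus $W_\beta = x \setminus \set{z_\gamma : \gamma < \beta}$ enumerates the elements of $x$ in $<_\tau$-increasing order. At a limit stage the configuration is given by the $\liminf$, and because the sequence $(W_\beta)$ is $\subseteq$-decreasing its $\liminf$ is simply $\bigcap_{\gamma < \beta} W_\gamma = x \setminus \set{z_\gamma : \gamma < \beta}$, so the limit content is again exactly the set of not-yet-processed elements. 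I would arrange the loop body so that the top of the loop (the emptiness test) carries the least line number occurring in the body; then the $\liminf$ of the active line at a limit stage is precisely this line, guaranteeing that at limit stages the program counter lands at the emptiness test with the correct register contents. Since the $W_\beta$ form a strictly decreasing sequence of sets while nonempty, they must reach $\emptyset$ at the stage whose index is the order type of the elements of $x$ under $<_\tau$; at that point the emptiness check succeeds and the routine terminates. Running the two subset routines in sequence and combining their outputs then yields an $\SRM$-program computing the characteristic function of equality.
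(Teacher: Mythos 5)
Your argument is correct, and it organizes the computation differently from the paper. The paper's program makes a single pass that depletes both registers simultaneously: it repeatedly takes the $<_\tau$-least element of $R_0$, tests membership in $R_1$, and removes the element from \emph{both} registers, reporting equality exactly when the two registers become empty at the same time (and inequality as soon as a taken element fails the membership test or one register empties before the other). You instead factor the problem through extensionality into two subset tests, keeping the second argument fixed and depleting only a working copy of the first. Both reductions rest on the same primitive loop ($\TAKE$, membership test, $\REMOVE$, jump back), so the computational content is essentially the same; the paper's version is more compact (one loop, fewer registers), while yours is more modular and additionally yields a reusable subroutine deciding $x \subseteq y$. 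Your explicit verification of the limit-stage behaviour --- that the $\liminf$ of a $\subseteq$-decreasing sequence of register contents is its intersection, and that placing the emptiness test on the least line of the loop body forces the program counter to return there at limits --- is a welcome addition: the paper leaves this entirely implicit for this program (only noting the analogous monotonicity point later, for the program computing $\alpha \mapsto V_\alpha$), and it is exactly the point at which a naive transfinite loop could go wrong.
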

\begin{proof}
    The following SRM-program computes whether the sets contained in registers $R_0$ and $R_1$ are equal: the program successively takes elements of the first set, checks whether they are contained in the second set, and removes the element from both sets. If both registers $R_0$ and $R_1$ are empty at the same time, then the original sets must have been equal. Otherwise, the original sets were not equal. 
    \begin{quote}
    \begin{algorithmic}[1]
        \State $\mathtt{IF} \ R_0 = \emptyset \ \mathtt{THEN \ GO \ TO} \ 3$
        \State $\mathtt{GO \ TO} \ 5$
        \State $\mathtt{IF} \ R_1 = \emptyset \ \mathtt{THEN \ GO \ TO} \ 11$ 
        \State $\mathtt{GO \ TO} \ 14$
        \State $\mathtt{TAKE}(0,2)$
        \State $\mathtt{REMOVE}(2,0)$
        \State $\mathtt{IF} \ R_2 \in R_1 \ \mathtt{THEN \ GO \ TO} \ 9$
        \State $\mathtt{GO \ TO} \ 14$ 
        \State $\mathtt{REMOVE}(2,1)$
        \State $\mathtt{GO \ TO} \ 1$
        \State $R_0 := \emptyset$
        \State $\mathtt{ADD}(0,0)$
        \State $\mathtt{GO \ TO} \ 15$
        \State $R_0 := \emptyset$ 
    \end{algorithmic}
    \end{quote}
    Note that the operation ``$\mathtt{GO \ TO} \ i$'' is a shortcut for ``$\mathtt{IF} \ R_j = \emptyset \ \mathtt{THEN \ GO \ TO} \ i$'' where $j$ is chosen in such a way that the register $R_j$ is not mentioned in any other instruction of the program.
\end{proof}

In view of this proposition, we can use an operation ``$\mathtt{IF} \ R_i = R_j \ \mathtt{THEN \ GO \ TO} \ k$'' by implementing the program of the proof of the proposition as a subroutine. The following lemma shows that many basic operations and predicates are $\SRM^+$-computable. 

\begin{lemma}
    \label{Lemma: basic SRM computable functions and predicates}
    The following functions and predicates are $\SRM^+$-computable:
    \begin{enumerate}
        \item the binary union function $(x,y) \mapsto x \cup y$,
        \item the intersection function $(x,y) \mapsto x \cap y$,
        \item the singleton and pairing functions, $x \mapsto \set{x}$ and $(x,y) \mapsto \set{x,y}$,
        \item the ordered pairing function $(x,y) \mapsto \langle x,y \rangle$,
        \item the first and second projections $\langle x,y \rangle \mapsto x$, $\langle x,y \rangle \mapsto y$,
        \item the predicate ``$x$ is an ordered pair'',
        \item the predicate ``$x$ is a function'',
        \item the union of a set, $x \mapsto \bigcup x$,
        \item the intersection of a set, $x \mapsto \bigcap x$,
        \item the function mapping a function to its domain $f \mapsto \dom(f)$,
        \item function application $(f, x) \mapsto f(x)$,
        \item the predicate ``$x$ is an ordinal'',
        \item the predicate ``$x$ is a sequence of ordinal length'',
        \item the function computing the $<_\tau$-least element $x \in y$ satisfying an $\SRM^+$-computable predicate $P(x)$,
        \item the $\alpha$th projection on a sequence, $\Seq{x_i}{i < \beta} \mapsto x_\alpha$,
        \item the power set function, $x \mapsto \Pow(x)$,
        \item the predicate ``$x$ is the power set of $y$'',
        \item the limes inferior of a sequence of sets.
    \end{enumerate}
\end{lemma}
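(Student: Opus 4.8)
The plan is to establish all eighteen items by bottom-up programming: I build a small library of subroutines and then treat the items in order of dependency, reducing each to the primitive operations of \Cref{Definition: SRM program}, the equality test established above, and items already handled. Almost all of the content is absorbed into a single device, a \emph{for-each loop} over the members of a set. To iterate over the contents of $R_i$, one copies $R_i$ into a scratch register $R_s$ and repeatedly applies $\mathtt{TAKE}(s,t)$ to load the $<_\tau$-least element of $R_s$ into a working register $R_t$, executes the loop body on $R_t$, deletes that element with $\mathtt{REMOVE}(t,s)$, and jumps back to the guard ``$R_s = \emptyset$''. The crucial point — and the only place where transfinite computation is genuinely used — is that this loop also processes infinite sets correctly. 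Since elements are removed in strictly increasing $<_\tau$-order and are never re-added, at a limit stage $\beta$ the liminf rule for register contents leaves in $R_s$ exactly those elements not yet taken, so every member of $R_i$ is visited exactly once; and by laying out the loop so that the liminf of the active program line returns control to the guard, the computation resumes correctly at each limit and halts once $R_s$ is exhausted.

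With this loop available I would proceed as follows. Items (i)--(iii) are immediate: binary union copies $y$ into the result and $\mathtt{ADD}$s each member of $x$ through the loop; intersection additionally guards each $\mathtt{ADD}$ by the membership test ``$R_t \in R_y$''; and the singleton and pairing functions are one or two applications of $\mathtt{ADD}$ to an emptied register. The Kuratowski pair (iv) is $\{\{x\},\{x,y\}\}$, and its projections (v) are obtained by $\mathtt{TAKE}$-ing the at-most-two members of the pair and combining them with the binary union and intersection of (i)--(ii) and the equality test; the predicates (vi)--(vii) are then decided by checking the defining structural conditions with nested for-each loops. The generalised union (viii) and intersection (ix) loop over the members of $x$, folding each in with the binary operations; the domain (x), application (xi), sequence projection (xv), the $<_\tau$-least witness (xiv), and the predicates (xii)--(xiii) are all decided by looping over the graph of the function (or over $y$, for (xiv)) and invoking the pair machinery, equality, and membership. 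The power-set items are the easiest of all: (xvi) is a single $\mathtt{POW}$ instruction and (xvii) compares $\mathcal{P}(y)$ to $x$ with the equality routine.

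The one genuinely delicate case, and where I expect the main obstacle to lie, is the liminf (xviii). Given a sequence $\Seq{x_\gamma}{\gamma < \delta}$ one must compute $\bigcup_{\alpha < \delta}\bigcap_{\alpha \le \gamma < \delta} x_\gamma$, i.e.\ the set of elements that are eventually always present. I would first form the union $\bigcup_{\gamma<\delta} x_\gamma$ of all candidate elements via (viii), and then, for each candidate $z$, collect the index set $B_z = \{\gamma < \delta : z \notin x_\gamma\}$ by a loop over $\dom$ of the sequence and decide ``$z$ is eventually in'' by testing whether $B_z$ is bounded in $\delta$ — using that $\sup B_z = \bigcup B_z$ for ordinals, so this reduces to (viii) together with membership and equality. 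The difficulty here is not any single primitive but the bookkeeping of the nested transfinite iteration: ranging over tails of the index ordinal while keeping the running intersections, their accumulating union, and the candidate loop in separate registers, and verifying that the liminf semantics at limit stages does not corrupt these scratch registers. Once (xviii) is in place the list is complete, and the whole lemma amounts to the observation that $\SRM^+$-machines are expressive enough to carry out these standard set-theoretic constructions.
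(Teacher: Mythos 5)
Your proposal is correct and follows essentially the same route as the paper: explicit take--remove--loop programs for the basic operations, reduction of the later items to the earlier ones, and the same characterisation of the limes inferior as the union of tail intersections (equivalently, the eventually-always-present elements). If anything, you supply more detail than the paper on the one genuinely non-routine point, namely why the for-each loop behaves correctly at limit stages under the liminf rule for registers and program lines.
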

\begin{proof}
    We will give explicit programs for the first few cases and then move to increasingly abstract descriptions of the desired programs:
    \begin{enumerate}
        \item Observe that the following program computes the union of the sets in registers $R_0$ and $R_1$ by adding all elements of $R_1$ to $R_0$:
    \begin{quote}
        \begin{algorithmic}[1]
            \State $\mathtt{IF} \ R_1 = \emptyset \  \mathtt{THEN \ GO \ TO} \ 6$
            \State $\mathtt{TAKE}(1,2)$
            \State $\mathtt{REMOVE}(2,1)$
            \State $\mathtt{ADD}(2,0)$
            \State $\mathtt{GO \ TO} \ 1$
        \end{algorithmic}
    \end{quote}
    
        \item Observe that the intersection of the sets contained in registers $R_0$ and $R_1$ can be computed as follows. Check for each element of $R_1$ whether it is contained in $R_0$ and, if so, save it into a register for the intersection:
        \begin{quote}
            \begin{algorithmic}[1]
                \State $\IFGOTO{R_1 =     \emptyset}{8}$
                \State $\TAKE(1,2)$
                \State $\REMOVE(2,1)$
                \State $\IFGOTO{R_2 \in     R_0}{6}$
                \State $\GOTO 1$
                \State $\ADD(2,3)$
                \State $\GOTO 1$
                \State $\COPY(3,0)$
            \end{algorithmic}
        \end{quote}
    
    \item The functions of (iii) can be easily implemented. 
    
    \item Recall that $\langle x, y \rangle = \set{\set{x},\set{x,y}}$, and this can easily be computed. 
    
    \item Note that $\bigcap \langle x, y \rangle = x$ and $\bigcup \langle x, y \rangle = \set{x,y}$. So we can construct the desired programs by combining the procedures from (i) and (ii) in a straightforward way. 
    
    \item We have to implement a procedure that checks whether $x$ is an ordered pair: use (v) to compute the first and second projection of $x$, say, $y$ and $z$. Then compute $\seq{y,z}$ with (iv) and check whether this equals $x$.
    
    \item Check whether $x$ consists of ordered pairs (using (vi)), and then check that $x$ is functional with (v).
    
    \item Use four registers: $R_0$ contains $x$, $R_1$ for the union of $x$, and $R_2$ and $R_3$ as auxiliary registers. Then proceed as follows: as long as $R_0$ is non-empty, take a set from $R_0$ and save it in $R_2$, then remove it from $R_0$. Then, as long as $R_2$ is non-empty, take an element of $R_2$ and save it in $R_3$, then remove it from $R_2$ and add it to $R_1$. Once $R_0$ is empty, we are done: copy our result from $R_1$ to $R_0$, and stop. 
    
    \item A similar procedure as in the previous item does the job.
    
    \item Take and remove elements from $R_0$ as long as it is non-empty. To each element, apply the first-projection from (v), and add it to $R_1$. Once $R_0$ is empty, $R_1$ contains the domain of $x$. 
    
    \item Search through $f$ until a pair with first coordinate $x$ is found. Then return the second projection of that pair.
    
    \item Observe that it is straightforward to compute whether ``$x$ is a transitive set of transitive sets''.
    
    \item Check whether $x$ is a function whose domain is an ordinal.
    
    \item Given a procedure for checking $P$, take and remove elements from $y$ until some $x$ is found satisfying $P(x)$. By the definition of the $\mathtt{TAKE}$-operation, this will be the $<_\tau$-minimal element of $y$ satisfying $P$.
    
    \item This is just function application.
    
    \item This is straightforward using the $\mathtt{POW}$-operation.
    
    \item Again, straightforward using the $\mathtt{POW}$-operation.
    
    \item Note that the limes inferior of a sequence of sets can be presented as follows:
    $$
        \liminf_{\gamma < \alpha} x_\gamma = \bigcup_{\beta < \alpha} \, \bigcap_{\gamma \in [\beta+1,\alpha)} x_\gamma.
    $$
    This can be straightforwardly implemented by combining the previous items of this lemma.
    \end{enumerate}
\end{proof}

\begin{lemma}
    \label{Lemma: Delta_0 truth decidable}
    Let $\phi(\bar x)$ be a $\Delta_0$-formula. There is an $\SRM$ $p$ such that $p(\godel{\phi},\bar x) = 1$ if $\VV \vDash \phi(\bar x)$ and $p(\godel{\phi},\bar x) =0$ if $\VV \vDash \neg \phi(\bar x)$.
\end{lemma}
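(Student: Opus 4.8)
The plan is to proceed by \emph{recursion on the syntactic build-up of $\phi$}, turning the Tarskian satisfaction clauses for $\Delta_0$-formulas into an $\SRM$-procedure that reads the code $\godel{\phi}$ and the assignment $\bar x$ as input. First I would fix a Gödel coding of formulas as sets that is \emph{$\SRM$-decodable}: there is an $\SRM$ that, given $\godel{\phi}$, determines whether $\phi$ is atomic, a negation, a conjunction/disjunction/implication, or a bounded quantification, and in each case extracts the codes of the immediate subformulas together with the relevant variable indices. Using the pairing, projection, and sequence operations of \Cref{Lemma: basic SRM computable functions and predicates}, such a coding and its decoding routines are routine to set up. An assignment to the free variables is stored as a finite sequence, and substituting a value for a variable is likewise $\SRM$-computable by the same toolkit.

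The base and propositional cases are immediate. For an atomic formula $x_i \in x_j$ I use the built-in membership test $\IFGOTO{R_i \in R_j}{k}$, and for $x_i = x_j$ I invoke the equality-deciding $\SRM$ from the proposition above as a subroutine; both return a truth value in $\set{0,1}$. A negation is handled by flipping the bit, and $\wedge,\vee,\rightarrow$ by the corresponding Boolean combination of the truth values of the immediate subformulas. The only genuinely set-theoretic clauses are the bounded quantifiers: to evaluate $\exists x \in y\,\psi$ under an assignment, I iterate over the elements of $y$ (taking and removing the $<_\tau$-least element in turn, exactly as in the programs of the previous lemmas), recursively evaluate $\psi$ under the assignment extended by $x \mapsto a$ for each $a \in y$, and return $1$ as soon as some call returns $1$; the universal case is dual, returning $0$ as soon as some call returns $0$. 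Because $y$ is a set, each such loop exhausts its elements after set-many steps.

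The subtle point is that an $\SRM$ has only \emph{finitely many} registers, whereas $\Delta_0$-formulas have unbounded quantifier nesting and unboundedly many variables, so the recursion above cannot be unfolded into a fixed sequence of register operations. I therefore implement it with an \emph{explicit stack}, encoded as a sequence in a single register: a frame records a subformula code, the current assignment, and---for quantifier and connective frames---how many child evaluations have been processed and their accumulated truth value. The machine pushes a child frame, descends, and on returning a bit pops and updates the parent frame, using the sequence and function-application operations of \Cref{Lemma: basic SRM computable functions and predicates} to manipulate the stack. This is exactly a recursion along the \emph{evaluation tree} whose nodes are pairs (subformula of $\phi$, assignment to its free variables): every branch has finite length, bounded by the syntactic complexity of $\phi$, and each quantifier node branches over the elements of a bounding set, so the tree is well-founded and set-sized. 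Hence the stack never grows without bound along any branch and the whole computation halts at some ordinal stage, yielding $1$ exactly when $\VV \vDash \phi(\bar x)$ and $0$ exactly when $\VV \vDash \neg\phi(\bar x)$. Note that no $\POW$-operation is ever used, so the procedure is genuinely an $\SRM$ rather than an $\SRM^+$.

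I expect the main obstacle to be precisely this bookkeeping of the recursion and substitution on a finite-register machine: encoding the stack and the assignments as sequences, proving that the decoding and stack-update routines are $\SRM$-computable, and verifying the well-foundedness (hence termination) of the evaluation tree. The atomic, Boolean, and single-quantifier steps are otherwise routine given the toolkit already established.
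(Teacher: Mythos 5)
Your proof takes essentially the same route as the paper's: recursion on the structure of $\phi$, with the built-in membership test and the equality subroutine for atoms, Boolean combinations for the connectives, and a $\TAKE$/$\REMOVE$ search through the bounding set for the bounded quantifiers. The paper simply says the machine ``recursively calls itself'' where you spell out the stack-based implementation of the recursion on a finite-register machine; that extra bookkeeping is correct and fills in a detail the paper leaves implicit.
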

\begin{proof}
    We construct a machine that recursively calls itself. For the base cases, let $p(\godel{x_i = x_j},\bar x)$ be the program that returns $1$ if $x_i = x_j$ and $0$ if $x_i \neq x_j$. Similarly, let $p(\godel{x_i \in x_j},\bar x)$ be the program that returns $1$ if $x_i \in x_j$ and $0$ if $x_i \notin x_j$. The cases for conjunction, disjunction and implication are easily constructed by recursion. For the bounded existential quantifier, $\exists x \in a \, \phi(x)$, the machine $p$ conducts a search through $a$ by consecutively taking and removing elements. If $p$ finds some $b \in a$ such that $p(\godel{\phi},\seq{b,a,x})= 1$, then $p$ returns $1$. If no such $b$ is found, then $a$ does not contain a witness for $\phi$ and $p$ returns $0$. The bounded universal quantifier can be implemented similarly with a bounded search. 
\end{proof}

The next theorem shows that moving from Ordinal Turing Machines to Set Register Machines does not increase the computational strength. We do not give a detailed proof since the result is not used in the remainder of this article. 

\begin{theorem}
    Ordinal Turing machines with parameters (OTMs) and set register machines with parameters (SRMs) can simulate each other.
\end{theorem}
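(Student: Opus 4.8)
The plan is to prove the two simulation directions separately, in each case fixing an encoding of configurations, translating a single computation step, and then verifying that the intrinsic limit rule of the simulating model reproduces that of the simulated one under the chosen encoding.

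For the direction that an $\SRM$ simulates an OTM, I would encode an OTM-configuration as a triple consisting of a state (a natural number), a head position (an ordinal $h$), and the set $T = \Set{\xi}{\text{the } \xi\text{th cell carries } 1}$ of marked tape cells, stored across three dedicated registers. A single OTM step is then simulated by testing $h \in T$ to read the scanned cell, consulting the finite transition table hard-coded into the program, updating $T$ with a single $\ADD(\cdot,\cdot)$ or $\REMOVE(\cdot,\cdot)$, and moving the head by an ordinal successor $h \mapsto h \cup \set{h}$ or predecessor $h \mapsto \bigcup h$ (both computable by \Cref{Lemma: basic SRM computable functions and predicates}). The decisive point is that, because the head and tape are stored as the ordinal $h$ and the set $T$ themselves, the register $\liminf$ of \Cref{Definition: SRM computation} computes exactly $\liminf_\gamma h_\gamma$ and $\liminf_\gamma T_\gamma$, which are by definition the OTM head position and tape content at a limit time. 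To make this agreement genuine I keep the three configuration registers untouched during the (possibly transfinite) scratch computation of a predecessor, updating them only once per simulated step, and arrange the program so that the $\liminf$ of the instruction pointer returns to the top of the main loop at limit stages.

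For the converse, I would use the global well-order $<_\tau$ to fix an order-preserving bijection $\xi \mapsto x_\xi$ between $\Ord$ and $\VV$ and represent the content $r$ of a register not by the single code of $r$ but by the characteristic function of its membership, namely a tape segment whose $\xi$th cell carries $1$ if and only if $x_\xi \in r$. This representation is chosen precisely so that the limit rules match: a cell is eventually constantly $1$ under the OTM $\liminf$ exactly when $x_\xi$ is eventually an element of the register, so the OTM tape limit computes the SRM register $\liminf$ on the nose. Each $\SRM$ command then becomes an OTM-computable manipulation of these segments: $\ADD(i,j)$ marks the cell indexed by the code of $r_i$ in segment $j$; $\TAKE(i,j)$ locates the least marked cell of segment $i$ and writes the membership representation of the corresponding set into segment $j$; the tests for $R_i = \emptyset$ and $R_i \in R_j$ inspect a segment or a single cell; and the remaining commands are analogous.

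The one ingredient these translations require is that the coding map $\xi \mapsto x_\xi$ and its inverse --- equivalently, the passage between a set presented by the codes of its elements and the code of the set itself --- be OTM-computable, and this is where I expect the main difficulty to lie. Over an arbitrary model carrying a global well-order the relation $<_\tau$ need not be OTM-computable; but, as suggested in \Cref{Remark: Alternatives for global well-ordering}, working in the constructible universe makes the canonical well-order $<_{\mathrm{L}}$ of complexity $\Sigma_1$ and the $\mathrm{L}$-hierarchy OTM-computable from ordinal parameters, so the required coding and decoding of hereditarily coded sets become available. Granting this, each translated step runs in ordinally many steps of the simulating machine, and the two limit analyses show that the simulation halts exactly when the original computation does and with the correspondingly encoded output; running the translated program to completion then yields the result.
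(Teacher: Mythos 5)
Your proposal is correct in outline but takes a genuinely different route from the paper in both directions, and since the paper's own proof is only a two-sentence sketch the comparison is worth making. For the direction in which an $\SRM$ simulates an OTM, the paper does not simulate OTMs directly at all: it factors through ordinal register machines, citing the known mutual simulability of OTMs and ORMs and observing that every ORM-program is literally executable by an $\SRM$ (ordinals are sets, and the ORM operations are available). Your direct tape-as-a-set-of-marked-cells encoding is more self-contained, and your analysis of the limit rules --- that the register $\liminf$ of the ordinal $h$ and of the set $T$ agrees with the OTM $\liminf$ of head position and tape content --- is exactly the point that needs checking; one small repair is that the usual OTM convention for moving left from a limit head position is to reset to $0$, so the predecessor step cannot simply be $h \mapsto \bigcup h$ (which fixes limit ordinals) and needs a case distinction. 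For the converse direction your characteristic-function encoding matches the paper's intended ``straightforward but tedious coding argument'', and you correctly isolate the computability of the coding map $\xi \mapsto x_\xi$ as the crux. You resolve it differently, however: retreating to $\LL$ and $<_\LL$ proves the statement for constructible $\SRM$s, not for $\SRM$s over $\VV$ with an arbitrary global well-order $<_\tau$, which is what the theorem asserts. The paper instead exploits the ``with parameters'' clause and hands the OTM a large enough set-sized fragment of $<_\tau$, coded as a set of ordinals, as a parameter; this keeps the full generality of the statement. Your version suffices for the application the paper ultimately needs (the constructible setting of \Cref{Subsection: Restricting to L}), but as a proof of the theorem as stated you should replace the passage to $\LL$ by the parameter trick.
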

\begin{proof}
    For the first direction, recall that OTMs and ordinal register machines (ORMs) can simulate each other (e.g. \citet{Carl2020}). It will, therefore, be enough to show that SRMs simulate ORMs but, in fact, more is true: it is straightforward to see that every ORM-program can be executed by an SRM.
    
    The other direction can be shown by a straightforward but tedious coding argument by using a large enough fragment of the well-order $<_\tau$ as a parameter (\citet{CarlGaleottiPassmann2020} spell out a very similar argument in an appendix; \citet[Section 2.3.2 and Chapter 3]{Carl2020} discusses codings as well).
\end{proof}

\subsection{Oracles and relative computability}

As with other notions of computability, we can enrich $\SRM^+$s with oracles. Let $O: \VV \to \VV$ be a partial class function. We obtain oracle $\SRM^{+,O}$ by extending \Cref{Definition: SRM program} with the following operation:

\begin{quote}
    ``$\mathtt{ORACLE}(i,j)$'': replace the contents of the $j$th register with the result of querying the oracle $O$ with $R_i$.
\end{quote}
We also extend \Cref{Definition: SRM computation}:
\begin{quote}
    If $p_l$ is ``$\mathtt{ORACLE}(i,j)$'', proceed as follows: if $O(r_i)$ is defined, let $r_j^+ = O(r_i)$, $r_n^+ = r_n$ for all $n \neq i$, and $l^+ = l + 1$. If $O(r_i)$ is undefined, let $r_j^+ = r_j$ for all $j \leq k$ and $l^+ = l$.
\end{quote}
The evaluation function is chosen like this to ensure that any $\SRM^{+,O}$ loops whenever the oracle is queried on undefined input. This entails that the oracle is only queried on its domain within a successful computation. Given oracles, we can define a relative notion of computability.

\begin{definition}
    We say that a function $f$ is \textit{$\SRM^+$-computable in $g$} if and only if there is an $\SRM^{+,g}$ program $p$ that computes $f$.
\end{definition}

A function is \textit{$\SRM^+$-computable} if and only if it is $\SRM^+$-computable in the empty function. In fact, a function is \textit{$\SRM^+$-computable} if and only if it is $\SRM^+$-computable in any set-sized function.


We will now work towards generalising a result of \citet{KleenePost1954}, which will be useful later but is also interesting in its own regard. 

\begin{proposition}
    The class function $V_{(\cdot)}: \Ord \to V, \alpha \mapsto V_\alpha$ is $\SRM^+$-computable.
\end{proposition}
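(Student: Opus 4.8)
The plan is to compute $V_\alpha$ by transfinite recursion along the usual clauses $V_0 = \emptyset$, $V_{\beta+1} = \Pow(V_\beta)$ and $V_\lambda = \bigcup_{\beta < \lambda} V_\beta$, and to exploit the limes-inferior semantics of \Cref{Definition: SRM computation} so that the limit clause is handled automatically. Concretely, I would use a register $R_\gamma$ as an ordinal counter (initialised to $\emptyset$) and a register $R_V$ holding the current value $V_\gamma$ (also initialised to $\emptyset$), and maintain the invariant that whenever control is at the top line $\ell_0$ of the main loop one has $R_V = V_{R_\gamma}$. Each pass through the loop first tests whether $R_\gamma = R_\alpha$, copying both registers into scratch registers and invoking the $\SRM$-routine for equality of sets constructed earlier, so that $R_\gamma$ and $R_V$ are left untouched; if the test succeeds the machine copies $R_V$ into the output register and halts, and otherwise it performs a single $\Pow$-step $R_V := \Pow(R_V)$ followed by a single $\ADD$-step $R_\gamma := R_\gamma \cup \set{R_\gamma}$ (turning $\gamma$ into $\gamma + 1$), before returning to $\ell_0$.

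The successor case of the invariant is then immediate: entering the loop with $R_V = V_\gamma$ and $R_\gamma = \gamma$, the $\Pow$-step yields $V_{\gamma+1}$ and the $\ADD$-step yields $\gamma + 1$. The point of the construction is that the limit case comes for free from monotonicity of the von Neumann hierarchy. During the stages of the computation approaching a limit ordinal $\lambda \le \alpha$, the register $R_\gamma$ holds only ordinals $< \lambda$ that are cofinal in $\lambda$, so its limes inferior is $\bigcup_{\gamma < \lambda} \gamma = \lambda$; and $R_V$ holds only sets of the form $V_\gamma$ or $V_{\gamma+1}$ with $\gamma < \lambda$, all contained in $V_\lambda$ and cofinally exhausting every $V_\gamma$, so its limes inferior is $\bigcup_{\gamma < \lambda} V_\gamma = V_\lambda$. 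Hence at the limit stage the invariant $R_V = V_{R_\gamma} = V_\lambda$ is restored, exactly as required, and the counter reaches every ordinal $\le \alpha$ at the top of the loop, so the test eventually succeeds with $R_V = V_\alpha$.

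It remains to ensure that control actually resumes at $\ell_0$ at such limit stages, i.e.\ that $\liminf_{\gamma} l_\gamma = \ell_0$. This is arranged by the standard device of placing the top of the main loop at the least line number occurring in the program, with the finitely many initialisation instructions executed only once at the start and all instructions of the equality subroutine assigned strictly larger line numbers: since $\ell_0$ is visited at the beginning of each of the cofinally many loop passes below $\lambda$, and no smaller line is visited in any tail, its limes inferior is $\ell_0$. Because the equality subroutine, the $\Pow$-step and the $\ADD$-step leave $R_V$ and $R_\gamma$ either untouched or modified by a single atomic operation, no spurious intermediate values can corrupt the relevant limes inferiors, and the scratch registers of the equality test are overwritten before use at the start of each pass. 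The main obstacle is precisely this bookkeeping at limit stages---checking that the $\subseteq$-monotonicity of $V_{(\cdot)}$ together with the arrangement of line numbers makes the $\liminf$-behaviour reproduce the intended recursion clauses---rather than any individual computation step, each of which is either primitive in $\SRM^+$ or already shown computable in \Cref{Lemma: basic SRM computable functions and predicates} and the preceding propositions.
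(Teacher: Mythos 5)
Your proposal is correct and follows essentially the same route as the paper: initialise a counter and a value register to $\emptyset$, loop by testing the counter against the input, applying $\POW$ to the value register and incrementing the counter, and rely on the $\subseteq$-monotonicity of both registers so that the limes-inferior semantics handles limit stages automatically. You supply more detail than the paper on the limit-stage bookkeeping (the $\liminf$ of the line counter and the non-interference of the equality subroutine's scratch registers), which the paper dispatches with the single remark that the relevant registers are monotonically increasing.
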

\begin{proof}
    An $\SRM^+$-program does this by starting with the empty set and consecutively computing power sets while keeping the current rank in an auxiliary register. The program keeps computing until it reaches the desired $\alpha$. 
    
    This procedure is implemented in the following program, where the input $\alpha$ is written into $R_0$; note that the initial configuration of all other registers is $\emptyset$. We use $R_1$ to count our current stage $\beta$ and $R_2$ to save the current $V_\beta$.
    \begin{quote}
    \begin{algorithmic}[1]
        \State $\mathtt{IF} \ R_0 = R_1 \ \mathtt{THEN \ GO \ TO} \ 5$ 
        \State $\mathtt{POW}(2,2)$
        \State $\mathtt{ADD}(1,1)$
        \State $\mathtt{GO \ TO} \ 1$
    \end{algorithmic}
    \end{quote}
    Note that the register $R_0$ remains unchanged, and the registers $R_1$ and $R_2$ are monotonically increasing. Therefore, the program does the job also at limit stages.
\end{proof}

The following proposition can be anticipated from how the evaluation of the $\mathtt{TAKE}$-operation was defined.

\begin{proposition}
    The global well-ordering $<_\tau$ is $\SRM^+$-decidable.
\end{proposition}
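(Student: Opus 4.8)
The plan is to reduce the decision ``$x <_\tau y$'' to a single application of the $\TAKE$-operation, exploiting the fact that $\TAKE$ was defined precisely to return the $<_\tau$-least element of a register's contents. Concretely, suppose the inputs $x$ and $y$ sit in registers $R_0$ and $R_1$. The guiding observation is that if $x \neq y$, then $\set{x,y}$ is a two-element set whose $<_\tau$-minimal element is exactly the smaller of the two in the ordering $<_\tau$; so a single $\TAKE$ reveals which of $x,y$ comes first.

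I would proceed as follows. First, run the equality-checking subroutine established above (the program from the proposition preceding \Cref{Lemma: basic SRM computable functions and predicates}) on $R_0$ and $R_1$. Since $<_\tau$ is a \emph{strict} well-ordering --- recall it refines rank via $\rank(x) < \rank(y) \Rightarrow x <_\tau y$ --- we have $x \not<_\tau x$, so if $x = y$ the program halts and outputs $0$. If $x \neq y$, compute the unordered pair $\set{x,y}$ using the pairing function of \Cref{Lemma: basic SRM computable functions and predicates}(iii), and store it in an auxiliary register $R_2$. Now apply $\TAKE(2,3)$: by the evaluation clause for the $\TAKE$-operation in \Cref{Definition: SRM computation}, the register $R_3$ receives the $<_\tau$-minimal element of $\set{x,y}$, call it $m$. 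Finally, invoke the equality subroutine once more to test whether $m = x$: output $1$ if so (then $x <_\tau y$, since $x \neq y$ and $x$ is the minimum), and output $0$ otherwise (then $m = y$, so $y <_\tau x$ and hence $x \not<_\tau y$).

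Correctness is immediate from the two cases, and every building block --- equality testing, pairing, and the primitive $\TAKE$ --- has already been shown to be $\SRM^{(+)}$-computable, so the composite program witnesses that the characteristic function of $<_\tau$ is $\SRM^+$-computable. There is no serious obstacle here; the only point requiring care is the strictness of the order, which forces the preliminary equality test: without it, the degenerate case $x = y$ (where $\set{x,y}$ collapses to the singleton $\set{x}$ and $\TAKE$ returns $x$) would be misreported as $x <_\tau y$. I would note that this argument uses only the $\TAKE$-operation and no power-set instruction, so in fact $<_\tau$ is already $\SRM$-decidable; I state the weaker $\SRM^+$ version merely to match the ambient setting.
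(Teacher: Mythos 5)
Your proof is correct and follows essentially the same route as the paper: test equality first, then form $\set{x,y}$, apply $\TAKE$, and compare the result with $x$. The paper's version is just terser; your added remark that the argument never uses $\POW$ (so $<_\tau$ is in fact $\SRM$-decidable) is a fair, if minor, observation.
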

\begin{proof}
    This is implemented by an $\SRM^+$ that does the following: given $a$ and $b$, check whether $a = b$. If so, we are done. If not, compute $\set{a,b}$ and use the $\mathtt{TAKE}$-operation to take a set $c \in \set{a,b}$. By the definition of the $\mathtt{TAKE}$-operation, either $c = a$ and then $a <_\tau b$, or $c = b$ and then $b <_\tau a$.
\end{proof}

By the \emph{$\alpha$th element of $V$ according to $<_\tau$}, we denote the unique $x$ such that the order type of $(\Set{y}{y <_\tau x}, <_\tau)$ is $\alpha$.

\begin{proposition}
    \label{Proposition: V re}
    The bijective class function $F_\tau: \Ord \to V$ mapping $\alpha$ to the $\alpha$th element of $V$ according to $<_\tau$ is $\SRM^+$-computable and so is its inverse.
\end{proposition}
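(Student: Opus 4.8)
The plan is to reduce the computation of $F_\tau$ and of its inverse to a computation carried out entirely inside a set, namely some $V_\beta$, where the task becomes that of computing the transitive collapse of a set well-ordering. The key order-theoretic observation is that, since $<_\tau$ refines the rank ordering, each $V_\beta$ is downward closed under $<_\tau$: if $x \in V_\beta$ and $y <_\tau x$, then $\rank(y) \le \rank(x) < \beta$, so $y \in V_\beta$. Hence $(V_\beta, <_\tau)$ is an \emph{initial segment} of the global order $(\VV, <_\tau)$. In particular, for any $x$, all $<_\tau$-predecessors of $x$ lie in $V_{\rank(x)+1}$, so the global $<_\tau$-index of $x$ equals its index inside $V_{\rank(x)+1}$; and $F_\tau(\alpha)$ is the $\alpha$th element of $(V_\beta, <_\tau)$ for \emph{every} $\beta$ large enough that this order has more than $\alpha$ elements. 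This reduces everything to computing, for a given set $s$, the transitive collapse $c_s \colon (s, <_\tau) \to \mathrm{ot}(s)$, $c_s(x) = \Set{c_s(y)}{y \in s,\ y <_\tau x}$, together with its inverse and the order type $\mathrm{ot}(s)$.

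Next I would show that $c_s$, viewed as a set of ordered pairs, is $\SRM^+$-computable uniformly in $s$. Since $<_\tau$ is $\SRM^+$-decidable (by the preceding proposition) and $s$ is a set, the relation $<_\tau \restriction s$ and each predecessor set $\Set{y \in s}{y <_\tau x}$ are computable by filtering $s$. The collapse is then computed by an iteration that processes $s$ in $<_\tau$-increasing order: maintain the remaining set $s'$ (initially $s$), an ordinal counter $\gamma$ (initially $0$), and the accumulated graph $g$ (initially $\emptyset$); at each step $\mathtt{TAKE}$ the $<_\tau$-least element $x$ of $s'$, add $\seq{x,\gamma}$ to $g$, $\mathtt{REMOVE}$ $x$ from $s'$, and increment $\gamma$; halt when $s' = \emptyset$, returning $g = c_s$ and $\gamma = \mathrm{ot}(s)$. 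The inverse collapse $\gamma \mapsto c_s^{-1}(\gamma)$ is then obtained by function inversion and application (both available by \Cref{Lemma: basic SRM computable functions and predicates}), so the \emph{$\gamma$th element of $s$} is $\SRM^+$-computable as a partial function of $(s,\gamma)$, defined exactly when $\gamma < \mathrm{ot}(s)$.

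With these in hand I would assemble the two functions. To compute $F_\tau(\alpha)$, search for the least ordinal $\beta$ such that $(V_\beta, <_\tau)$ has more than $\alpha$ elements, i.e.\ $\alpha < \mathrm{ot}(V_\beta)$; this predicate is decidable because $V_{(\cdot)}$ is $\SRM^+$-computable (preceding proposition), $\mathrm{ot}(V_\beta)$ is computable by the previous paragraph, and comparison of ordinals is decidable. Such a $\beta$ exists (take $\beta = \rank(F_\tau(\alpha)) + 1$), and the minimisation over ordinals terminates exactly as in the computation of $V_{(\cdot)}$; the output is the $\alpha$th element of $(V_\beta, <_\tau)$, which by the first paragraph is $F_\tau(\alpha)$. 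To compute $F_\tau^{-1}(x)$, search for the least $\beta$ with $x \in V_\beta$ — this $\beta$ equals $\rank(x)+1$, the search terminating for the same reason and membership being decidable — and output $c_{V_\beta}(x)$, the $<_\tau$-index of $x$ inside $V_\beta$, which by the first paragraph is the global index of $x$. Bijectivity is immediate from the fact that $<_\tau$ well-orders $\VV$ in order type $\Ord$.

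The main obstacle is the correct behaviour of the collapse iteration at limit stages: one must check that the $\liminf$ evaluation rule reconstructs the intended configuration at a limit machine-stage $\lambda$. This works because all three pieces of data evolve monotonically — $s'$ only shrinks, while $\gamma$ and $g$ only grow — so their limites inferiores are, respectively, the set of not-yet-processed elements, the limit ordinal reached, and the graph of $c_s$ restricted to the elements already processed; this is exactly the intended state, and the loop resumes at its top since that is the program line visited cofinally. This is the same monotone-through-limits phenomenon that makes $V_{(\cdot)}$ computable, and verifying it — together with the routine bookkeeping that aligns the counter with the stage — is where the real work lies.
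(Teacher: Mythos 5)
Your proposal is correct and follows essentially the same route as the paper's proof: both exploit that $<_\tau$ restricted to any $V_\beta$ is an initial segment of the global order, compute a suitable $V_\beta$ with the $\POW$-operation, and then enumerate it in $<_\tau$-order via $\TAKE$/$\REMOVE$ against an ordinal counter. Your version is merely more explicit about the choice of $\beta$ (a minimisation rather than the paper's direct use of $V_{\alpha+1}$) and about the monotonicity needed for the $\liminf$ rule at limit stages, which the paper leaves implicit.
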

\begin{proof}
    Recall our assumption that $\rank(x) < \rank(y)$ implies $x <_\tau y$. Therefore, computing $<_\tau$ on some $V_\alpha$ means to compute an initial segment of $<_\tau$. We can therefore proceed as follows. 
    
    For the forward direction, use the $\mathtt{POW}$-operation to compute $V_{\alpha+1}$. Then take and remove elements from $V_{\alpha+1}$ while running a counter until it reaches $\alpha$. The last element taken is the set we were looking for.
    
    For the other direction, given $a \in V$, compute a $V_\beta$ such that $a \in V_\beta$. Then start a counter and successively take and remove elements from $V_\beta$ until $a$ is reached. The value of the counter is the ordinal $\alpha$ we are looking for.
\end{proof}

\begin{proposition}
    \label{Proposition: SRM Halting problem}
    Let $O$ be a (partial) class function. The $\SRM^{+,O}$ halting problem is $\SRM^{+,O}$ undecidable. 
\end{proposition}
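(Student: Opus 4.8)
The plan is to run the classical diagonal argument for the unsolvability of the halting problem, relativised to the oracle $O$, taking care that the two ingredients it needs make sense in the transfinite setting: a coding of programs as sets, and a \emph{provably} non-halting loop. For the coding, I would fix an assignment $p \mapsto \godel{p}$ sending each $\SRM^{+,O}$-program to a hereditarily finite set; this is unproblematic since a program is a finite sequence of commands, each built from one of finitely many command-shapes with natural-number parameters (register indices and line numbers). For the present argument I do not need a universal machine: the only facts used are that a program may take such a code as an ordinary set input, and that a fixed program can be inserted verbatim as a subroutine into another program, exactly as was done above for the equality test.

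Suppose, for contradiction, that the halting predicate $\mathrm{Halt}(p,x)$, ``the program coded by $p$ halts on input $x$'', were $\SRM^{+,O}$-decidable, say by a program $h$ which halts on every input and outputs $1$ when $\mathrm{Halt}$ holds and $0$ otherwise. Using $h$ as a subroutine, I would build a diagonal program $D$ that, on input $x$, first copies $x$ so as to place it in both the program-slot and the input-slot, then runs $h$ on $(x,x)$, inspects the output register, and finally branches: if the output is $0$ it falls off the end of its code (reaching a line beyond the program length, so that the successor configuration is undefined) and thereby halts; if the output is $1$ it enters a loop.

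The one point genuinely specific to transfinite computation is that this loop must never halt, including through limit stages, where the liminf rule of \Cref{Definition: SRM computation} governs the configuration. I would implement the loop as a single line that unconditionally jumps to itself (the $\GOTO$ shortcut with a permanently empty auxiliary register). Along such a run every configuration equals the one at which the loop was entered; hence at any limit stage $\beta$ the liminf of the constant line numbers and the liminf of the constant register contents reproduce that very configuration. Consequently the active line never exceeds the program length, the successor configuration is always defined, and the computation is a proper-class sequence that never terminates. Thus $D$ halts on $x$ precisely when $h(x,x)=0$, that is, precisely when $\mathrm{Halt}(x,x)$ fails.

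Finally I would diagonalise by letting $d = \godel{D}$ and running $D$ on input $d$. If $D$ halts on $d$, then $\mathrm{Halt}(d,d)$ holds, so $h(d,d)=1$, so $D$ enters its non-halting loop and does \emph{not} halt on $d$, a contradiction; if $D$ does not halt on $d$, then $\mathrm{Halt}(d,d)$ fails, so $h(d,d)=0$, so $D$ falls off its code and \emph{does} halt on $d$, again a contradiction. Hence no such $h$ exists. Since $O$ enters only as a fixed oracle shared by $h$, by $D$, and by all coded programs, the construction relativises uniformly, yielding the $\SRM^{+,O}$-undecidability of the $\SRM^{+,O}$ halting problem. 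I expect the only real obstacle to be the limit-stage analysis of the loop just described; the coding and the subroutine insertion are routine.
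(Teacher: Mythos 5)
Your proposal is correct and follows essentially the same route as the paper, which gives only a two-line sketch of the standard diagonal argument (assume a decider $p$, build $q$ that loops iff $p$ says ``halts'', derive $p(q)=1 \Leftrightarrow p(q)=0$). The extra care you take with the coding of programs and with the liminf behaviour of the self-loop at limit stages is exactly the right detail to check in the transfinite setting, but it does not change the argument.
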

\begin{proof}
    This is proved by contradiction with the usual diagonal argument. Assume that there is a machine $p$ such that $p(x) = 1$ if and only if $x$ is an $\SRM^+$ that halts, and $p(x) = 0$ otherwise. Then define a machine $q$ such that $q(x)$ does not halt if and only if $p(x) = 1$. Then, $p(q) = 1$ if and only if $q(q)$ does not halt if and only if $p(q) = 0$. A contradiction.
\end{proof}

\begin{proposition}
    Let $O$ be a (partial) class function. Then there is an oracle $\tilde O$ such that there is an $\SRM^{+,\tilde O}$-program $u$ which is universal for $\SRM^{+,O}$, i.e. $u(p,x)$ and $p(x)$ are both defined and equal whenever at least one of them is defined. Moreover, there is an $\SRM^{+,\tilde O}$-program $c$ such that $c(p,x) = 1$ if $x$ is a successful computation of $p$ and $c(p,x) = 0$ otherwise. In particular, if $O$ is the empty function, then $\tilde O$ can be taken empty as well.
\end{proposition}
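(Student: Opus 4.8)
The plan is to take $\tilde O$ to be a \emph{total} class function encoding both the graph and the domain of $O$ — concretely, $\tilde O(y) = \seq{1, O(y)}$ when $O(y)$ is defined and $\tilde O(y) = \seq{0,\emptyset}$ otherwise — so that an $\SRM^{+,\tilde O}$-machine can decide, without ever looping, whether a given oracle query is defined and what its value is. (When $O$ is empty every query is undefined and this is known a priori, so no oracle is needed; hence $\tilde O$ may then be taken empty.) The central idea of the construction is that the universal machine must never try to keep the simulated configuration in its own registers across limit stages; instead it maintains in a single register $H$ the entire \emph{history} of the simulated run, i.e.\ the function $\delta \mapsto d_\delta$ recorded so far. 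Since $H$ is only ever extended — one configuration appended per loop iteration — it grows monotonically in the subset order, and therefore at every limit stage of the universal machine's own computation its limes inferior equals $\bigcup_\gamma H_\gamma$, which is exactly the history accumulated so far. This is what makes the transfinite behaviour of the simulation come essentially for free.

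Concretely, $u$ runs a single loop whose head is line $1$. At the head it resets its auxiliary registers to $\emptyset$, computes $\mu = \dom(H)$, and branches according to whether $\mu$ is $0$, a successor, or a limit (decidable by comparing $\mu$ with $\bigcup\mu$, using \Cref{Lemma: basic SRM computable functions and predicates}). If $\mu = 0$ it writes the initial configuration $(1,x_0,\dots,x_j,\emptyset,\dots)$; if $\mu = \nu+1$ it reads $d_\nu$ from $H$ and computes $(d_\nu)^+$ by decoding the command $p_{l_\nu}$ and applying the corresponding register update, each basic operation, the $<_\tau$-based $\mathtt{TAKE}$, and — via $\tilde O$ — the oracle step being available by the earlier lemmas; if $\mu$ is a limit it extracts from $H$ the finitely many component-sequences $\seq{l_\delta : \delta < \mu}$ and $\seq{r_{i,\delta} : \delta < \mu}$ and applies the limes-inferior routine of \Cref{Lemma: basic SRM computable functions and predicates} to each. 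In every case it appends the new pair to $H$ and returns to line $1$, halting exactly when the recorded line $l_\mu$ points past the end of $p$, whereupon it reads off the output from the halting configuration. Because line $1$ is the least line of the program and is revisited at the end of each iteration, and because the successor stages are cofinal below any simulated limit, the limes inferior of the line register at each main-level limit is again $1$; the auxiliary registers are overwritten immediately after the head; and $H$ has the correct limes inferior; so the machine resumes cleanly at the loop head and the recorded history coincides stage-by-stage with the computation of $p$ on $x$ given by \Cref{Definition: SRM computation}. This yields faithfulness: $u(p,x)$ halts with value $p(x)$ precisely when $p(x)$ is defined, and diverges — either through unboundedly many stages, or by deliberately looping once $\tilde O$ reports an undefined oracle query — precisely when $p(x)$ is not.

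The checker $c$ is built from the same apparatus: given $p$ and a candidate $x$ it first verifies, using \Cref{Lemma: basic SRM computable functions and predicates}, that $x$ is a sequence of ordinal length $\alpha+1$, then iterates over $\dom(x)$, checking the initial configuration at $0$, that $x_{\delta+1} = (x_\delta)^+$ at successors, that $x_\delta = \liminf_{\gamma<\delta} x_\gamma$ at limits, and finally that $x_\alpha$ is a halting configuration; it outputs $1$ if all checks pass and $0$ otherwise, accumulating failures in a monotonically growing register so that the verdict survives the limit stages of $c$'s own run. The one place where $\tilde O$ is essential is here: to verify a successor step whose command is $\mathtt{ORACLE}(i,j)$, $c$ must decide whether $O(r_{i,\delta})$ is defined, and it is precisely to keep $c$ \emph{total} — returning $0$ rather than looping on a malformed candidate that misreports an oracle step — that we encoded the domain of $O$ into $\tilde O$. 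When $O$ is empty every oracle step is the looping case, so no such candidate can be a successful computation and $c$ rejects it without consulting any oracle, giving the final clause.

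The main obstacle is exactly the limit-stage bookkeeping of the two middle paragraphs. One must check that the monotonicity of $H$, the reset of the auxiliary registers at the loop head, and the placement of the loop head at the least line together force the machine to reconstruct the precise limes-inferior configuration demanded by \Cref{Definition: SRM computation} at each limit, rather than resuming in the middle of a half-finished transition; and one must verify that this is not disturbed by the fact that a limit-stage iteration itself invokes the transfinite limes-inferior routine — the point being that that routine leaves $H$ untouched and never visits the main loop head, so its own internal limits are handled by its standalone correctness while the cofinality of the successor stages guarantees that the line register returns to line $1$ at every main-level limit. Everything else is a matter of assembling subroutines already shown computable in \Cref{Lemma: basic SRM computable functions and predicates}.
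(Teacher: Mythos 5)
Your proposal is correct, and your choice of $\tilde O$ (totalising $O$ with a definedness flag) and your construction of the checker $c$ coincide with what the paper does. Where you genuinely diverge is in the universal machine: the paper builds $c$ first and then obtains $u$ as an unbounded search through $\mathrm{V}$ (via the enumeration $F_\tau$ of \Cref{Proposition: V re}) for a set that $c$ certifies as a successful computation of $p$ on $x$, returning that computation's output; you instead simulate $p$ stage by stage, keeping the accumulated history in a monotonically growing register and recomputing limit configurations with the limes-inferior routine of \Cref{Lemma: basic SRM computable functions and predicates}. The paper's search concentrates all the transfinite delicacy in the total machine $c$, whose control flow is a set-bounded iteration, so the correctness of $u$ reduces to the observation that $p(x)$ is defined iff a successful computation exists; your direct simulation avoids quantifying over all of $\mathrm{V}$ and tracks the simulated run in real time, but it obliges you to argue that the simulator's own limit stages reconstruct exactly the configuration demanded by \Cref{Definition: SRM computation} --- via the monotone history register, the loop head placed at the least revisited line, and the cofinality of the successor iterations --- which is precisely the bookkeeping you identify as the crux. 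Both routes are sound; yours trades the paper's cleaner reduction for a tighter but more delicate simulation, with the minor structural advantage that $u$ and $c$ are assembled from shared machinery rather than one being defined on top of the other.
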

\begin{proof}
    Let $\tilde O$ be the function such that $\tilde O(x) = \seq{1,O(x)}$ whenever $O(x)$ is defined and $\tilde O(x) = \seq{0,0}$ whenever $O(x)$ is undefined. 
    Using \Cref{Lemma: basic SRM computable functions and predicates} and $\tilde O$, it is straightforward (but tedious) to construct a program $c$ such that $c(p,x) = 1$ if $x$ is a successful computation of $p$ and $c(p,x) = 0$ otherwise. Then note that $p(x)$ is defined if and only if there is a successful computation of $p$ on input $x$. For this reason, the universal machine can be implemented as an unbounded search through $\VV$ that stops if a successful computation for $p$ on input $x$ is found, and returns $p(x)$. 
    In the case where $O$ is the empty function, we can take $\tilde O$ to be the empty function as well because all $\SRM^+$-operations are $\SRM^+$-decidable.
\end{proof}

It is possible to construct an $\SRM^{+,O}$-universal machine for $\SRM^{+,O}$, if one changes the definition of oracle evaluation in such a way that the universal machine can query the oracle without the risk of not halting.

Let $D(x,y)$ be a binary predicate in the language of set theory. Adapting from \citet{KleenePost1954}, we write $D_z(x) := D(x,z)$ and define $D^z$ to be the join of all $D_{y}$ with $y \neq z$, as follows:
$$
    D^z(x,y) := \begin{cases}
        D(x,y), & \text{ if } y \neq z, \\
        0, & \text{ if } y = z.
    \end{cases}
$$
The proof of the following theorem is a generalisation of a result by \citet[Theorem 2]{KleenePost1954}; our proof will be a generalisation of their diagonal argument to the case of $\SRM^+$.

\begin{theorem}
    \label{Theorem: D for SRM^+}
    There is a set-theoretic predicate $D(x,y)$ such that $D_z$ is not $\SRM^+$-computable in $D^z$.
\end{theorem}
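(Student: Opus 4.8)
The plan is to run a Kleene--Post finite-extension (forcing) argument, carried out by transfinite recursion in the metatheory, and to take $D$ to be the ``generic'' predicate obtained in the limit. Conditions will be set-sized partial functions $c \colon \VV \times \VV \rightharpoonup 2$, and I would build an increasing chain $\seq{d_\eta : \eta \in \Ord}$ of conditions, setting $d_0 = \emptyset$, taking unions at limits, and finally putting $D = \bigcup_{\eta \in \Ord} d_\eta$ (completed to a total predicate by filling in $0$ wherever it is still undefined). Note that each $d_\eta$ indexed by an \emph{ordinal} $\eta$ is a set: successors add set-many values, and at a limit $\lambda$ the family $\set{d_\eta : \eta < \lambda}$ is a set by Replacement, so its union is a set; only the full union over all of $\Ord$ is a proper class. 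Since $F_\tau$ (\Cref{Proposition: V re}) gives a definable bijection $\Ord \to \VV$ and $\SRM^+$-programs are coded by natural numbers, I can fix a definable enumeration $\seq{(z_\eta, p_\eta, \bar a_\eta) : \eta \in \Ord}$ in which every triple consisting of a ``column'' $z \in \VV$, an $\SRM^+$-program $p$, and a finite parameter tuple $\bar a$ occurs. Treating parameters explicitly is essential, since by definition $\SRM^+$-computability in an oracle permits set parameters.

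The requirement treated at stage $\eta$, with $(z, p, \bar a) = (z_\eta, p_\eta, \bar a_\eta)$, is that $p$ with parameters $\bar a$ and oracle $D^z$ does \emph{not} compute $D_z$. The crucial structural observation is that $D^z$ ignores column $z$: $D^z(u, z) = 0$ for all $u$, while $D^z(u,w) = D(u,w)$ for $w \neq z$. Hence committing a value $D(x,z)$ in column $z$ never affects the oracle $D^z$ we are fighting, giving a clean separation between the ``oracle part'' of a condition (the columns $w \neq z$) and the diagonal value (column $z$). At stage $\eta$ I first pick the $<_\tau$-least witness $x$ with $(x,z) \notin \dom(d_\eta)$, possible since $\dom(d_\eta)$ is a set. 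Then I decide the dichotomy: does there exist a set-sized condition $c' \supseteq d_\eta$ with $(x,z) \notin \dom(c')$ such that $p$, on input $x$ with parameters $\bar a$ and the oracle determined by $c'$ on columns $\neq z$ (and $0$ on column $z$), halts with some output $v \in 2$? If yes (Case~A), I set $d_{\eta+1} = c' \cup \set{\big((x,z), 1 - v\big)}$; if no (Case~B), I set $d_{\eta+1} = d_\eta \cup \set{\big((x,z), 0\big)}$.

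To verify correctness, in Case~A monotonicity and determinism give $p^{D^z}_{\bar a}(x) \downarrow = v$ for the final $D$ (since $D \supseteq c'$ on the queried column-$\neq z$ values, and these drive a deterministic computation), whereas $D_z(x) = D(x,z) = 1 - v$ by construction, so $p^{D^z}_{\bar a}$ errs at $x$. In Case~B I claim $p^{D^z}_{\bar a}(x) \uparrow$, which again differs from the defined value $D_z(x)$. This is where the main technical point enters, the \emph{use principle}: a halting $\SRM^{+,O}$-computation has set length $\alpha + 1$ and therefore queries its oracle only set-many times. Thus if $p^{D^z}_{\bar a}(x)$ halted, the set of its oracle queries on columns $\neq z$, together with $d_\eta$, would form a set-sized condition $c' \supseteq d_\eta$ with $(x,z)\notin\dom(c')$ reproducing the same deterministic halting computation, contradicting Case~B. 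Committed diagonal values are never overwritten, since every later condition extends $d_{\eta+1}$ and each stage diagonalises at a fresh point of its column, so distinct requirements do not interfere.

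The hard part will not be any single step but the bookkeeping that makes the transfinite recursion legitimate: verifying the use principle so that Case~B really yields divergence, checking that conditions remain set-sized along the whole chain (so the Case~A search quantifies over sets and Replacement applies at limits), and confirming that the recursion is definable — in particular that the Case~A/B dichotomy, an existential quantifier over sets with a definable matrix (``there is a halting computation sequence''), is a definable condition, so that $\seq{d_\eta}$ and hence $D$ is a genuine class predicate definable from $<_\tau$. The dichotomy need not be decidable; definability alone suffices for the metatheoretic recursion. Since every requirement $(z,p,\bar a)$ is met, for each $z$ no $\SRM^+$-program with parameters computes $D_z$ from $D^z$, which is the claim.
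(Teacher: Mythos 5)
Your proposal is correct and is essentially the paper's own argument: the same Kleene--Post finite-extension diagonalisation, with the same two-case dichotomy at each stage and the same use-principle justification (a halting computation has set length, hence queries only set-many oracle values) for the divergence case. The only difference is presentational --- the paper packages the recursion as a total $\SRM^{+,H}$-program with a halting-problem oracle (making the case distinction decidable rather than merely definable), while you run it as a definable metatheoretic transfinite recursion; both yield the required class predicate $D$.
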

\begin{proof}
    We define the predicate by informally describing a total $\SRM^{+,H}$-program that makes use of an oracle $H$ for the $\SRM^+$-halting problem.
    
    Let $R_{init}$ be an auxiliary register which is used to save an initial segment of the predicate we are defining. Let $R_{stage}$ be an auxiliary register that contains an ordinal representing the current stage of the construction.
    
    To ensure the non-computability desired in the theorem, we have to satisfy class-many conditions, for each $\SRM^+$-program $e$ (possibly with parameters) and set $z$:
    \begin{align*}
        \tag{$P_{e,z}$} \text{The program } e \text{ does not witness that } D_z \text{ is $\SRM^+$-computable in } D^z.
    \end{align*}
    Apply the inverse Gödel pairing function to $R_{stage}$ obtain ordinals $\alpha$ and $\beta$. By \Cref{Proposition: V re}, calculate $e := F_\tau^{-1}(\alpha)$ and $z := F_\tau^{-1}(\beta)$. We want to extend $R_{init}$ in such a way that $P_{e,z}$ will hold. To this end, let $x$ be the $<_\tau$-least set for which $R_{init}(x,z)$ is undefined. For convenience, let us say that $E$ is a \textit{$z$-extension of $R_{init}$} if $R_{init} \subseteq E$ and if $R_{init}(w,z)$ is undefined for some $w$ then so is $E(w,z)$. There are two cases to consider.
    
    \textbf{Case 1:} There is a $z$-extension $D_{init}$ of $R_{init}$ such that there is a successful computation of $e$ on input $(x,z)$ using $D_{init}^z$ as an oracle, i.e. the oracle is the predicate obtained from $D_{init}$ by taking $D_{init}^z(w,y) = D_{init}(w,y)$ if $y \neq z$, and $D_{init}^z(w,z) = 0$ for all $w$. Note that our machine can decide whether such an extension exists by using the oracle for the $\SRM^+$-halting problem. Let $y \in \set{0,1}$ be the result of this computation. As $D_{init}$ is a $z$-extension of $R_{init}$, it must be that $D_{init}(x,z)$ is undefined. We can therefore set $R_{init} := D_{init} \cup \set{((x,z), 1 - y)}$. This choice ensures that $e$ does not witness that $D_z$ is computable in $D^z$.
    
    \textbf{Case 2:} For all $z$-extensions $D_{init}$ of $R_{init}$ there is no successful computation of $e$ on input $(x,z)$ with $D_{init}^z$ as oracle. In this case, we let $R_{init} := R_{init} \cup \set{((x,y),0)}$. This (arbitrary) choice works because the final predicate $D$ will be such that there is no successful computation of $e$ on input $(x,z)$ with oracle $D^z$: for contradiction, suppose there was such a successful computation $c$ and consider the $z$-extension $D_{init}$ of $R_{init}$ given by $D_{init}(x,y) = D(x,y)$ for all $(x,y)$, $y \neq z$, for which the oracle is called during the computation $c$. As $D_{init}^z(w,z)$ is defined for all $w$, all oracle calls during the computation $c$ are still the same when using $D_{init}^z$ instead of $D^z$. Hence, there is a successful computation $c$ of $e$ on input $(x,z)$ with oracle $D_{init}^z$. But that is in contradiction to the assumption of this case. 
    
    The program defined this way will eventually give rise to a completely defined predicate $D$ on $V \times V$. The value of $D(x,y)$ can be computed by running the procedure above until the value for $(x,y)$ is known.
\end{proof}

Note that the program described in the proof above does not use any parameters and can thus be coded as a natural number.

\begin{remark}
    In fact, Kleene and Post prove a stronger result which allows to locate $D$ between any two Turing degrees. A similar result is possible here but we leave the proof to the interested reader as we do not need it.
\end{remark}

\subsection{Constructible SRMs}
\label{Subsection: Restricting to L}

For our applications to the first-order logic of CZF, it will be important that we can express the predicate ``$D(x,y)$ holds'' in a way that only uses the language of set theory without introducing an extra relation symbol into our language to refer to the global well-order. This means that we have to circumvent referring to $<_\tau$ as this is an extra symbol that cannot be defined in terms of a set-theoretic formula. Due to the following well-known fact, we will restrict our attention to constructible sets (for reference see, e.g., \citet[Theorem 13.18 \& Lemma 13.19]{Jech2003}):

\begin{fact}
    There is a $\Sigma_1$-definable well-ordering $<_\LL$ of the constructible universe $\LL$.
\end{fact}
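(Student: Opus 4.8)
The plan is to construct $<_\LL$ by transfinite recursion along the constructible hierarchy and then to verify that the resulting relation is $\Sigma_1$-definable. Recall that $\LL_0 = \emptyset$, that $\LL_{\alpha+1} = \mathrm{Def}(\LL_\alpha)$ is the collection of subsets of $\LL_\alpha$ first-order definable over $(\LL_\alpha, \in)$ with parameters, that $\LL_\lambda = \bigcup_{\alpha<\lambda} \LL_\alpha$ for limit $\lambda$, and that $\LL = \bigcup_{\alpha \in \Ord} \LL_\alpha$. The guiding idea is that each constructible set appears for the first time at some level, so one orders sets first by their level of appearance and then, within a level, by the cheapest formula-plus-parameter code that defines them.

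Concretely, I would define by recursion a well-order $<_\alpha$ of each $\LL_\alpha$ so that the family is coherent, i.e.\ $<_\alpha$ restricted to $\LL_\beta$ equals $<_\beta$ for $\beta < \alpha$. Fix a Gödel numbering of first-order $\in$-formulas. Given $<_\alpha$ on $\LL_\alpha$, every $x \in \LL_{\alpha+1}$ is of the form $\Set{u \in \LL_\alpha}{(\LL_\alpha,\in) \vDash \phi(u,\bar a)}$ for some $\godel{\phi}$ and tuple $\bar a$ from $\LL_\alpha$; using $<_\alpha$ to well-order tuples and the Gödel numbering to well-order formulas, assign to each such $x$ its $<$-least defining code, and order the genuinely new elements of $\LL_{\alpha+1}\setminus \LL_\alpha$ by comparing these least codes lexicographically in $(\godel{\phi},\bar a)$, letting old elements retain their $<_\alpha$-order and precede all new ones. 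At limits I would set $<_\lambda = \bigcup_{\beta<\lambda}<_\beta$, which is a well-order by coherence, and finally $<_\LL = \bigcup_{\alpha} <_\alpha$. That $<_\LL$ is a linear order is immediate from coherence, and it is well-founded because any nonempty subclass has a least level $\alpha$ at which some member appears, and the $<_\alpha$-least member appearing at level $\alpha$ is $<_\LL$-least overall.

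The genuinely substantial step --- and where I expect the main work to lie --- is $\Sigma_1$-definability. The construction above is absolute enough to be captured existentially: $x <_\LL y$ holds if and only if there exist an ordinal $\alpha$ and a set $w$ with $w = \LL_\alpha$, $x,y \in w$, and $x$ preceding $y$ in the canonical well-order of $w$ computed internally. The crux is that the map $\alpha \mapsto \LL_\alpha$ is uniformly $\Sigma_1$-definable, which in turn rests on the satisfaction relation for set structures being $\Delta_1$ and on the $\mathrm{Def}$ operation together with the recursion building $(<_\alpha)_\alpha$ being $\Sigma_1$; one packages the whole construction as a single $\Sigma_1$ recursion and appeals to the usual $\Sigma_1$-recursion theorems, exactly as in \citet[Theorem 13.18 \& Lemma 13.19]{Jech2003}. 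Once $w = \LL_\alpha$ is available as a witness, everything else --- that $x,y$ lie in $w$ and that $x$ comes first under the internally computed order --- is bounded in $w$, so the whole statement is $\Sigma_1$. (Since $<_\LL$ is moreover a linear order, $x <_\LL y \leftrightarrow x \neq y \wedge \neg(y <_\LL x)$ upgrades it to $\Delta_1$, though only $\Sigma_1$ is claimed here.) The sole genuine obstacle is thus the bookkeeping of the uniform $\Sigma_1$-definability of the hierarchy; the order-theoretic verification that $<_\LL$ is a well-ordering is routine.
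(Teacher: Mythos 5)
Your proposal is correct and is exactly the standard argument the paper relies on: the paper states this as a Fact without proof, citing \citet[Theorem 13.18 \& Lemma 13.19]{Jech2003}, and your construction (ordering by level of first appearance, then by least defining code, with $\Sigma_1$-definability via the uniform $\Sigma_1$ recursion for $\alpha \mapsto \LL_\alpha$ and the $\Delta_1$ satisfaction relation) is precisely that cited proof.
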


So if we restrict our attention to $\SRM^+$s that work only on constructible sets, we can replace $<_\tau$ with $<_\LL$ in \Cref{Definition: SRM computation}. The resulting notion of $\SRM$ will be called \textit{constructible $\SRM^+$} and denoted, in short, by $\SRM^+_\LL$. Note that all of the results obtained so far about $\SRM^+$s can be relativised to $\LL$ and thus transferred to $\SRM^+_\LL$. In particular, we get the following versions of \Cref{Lemma: Delta_0 truth decidable} and \Cref{Theorem: D for SRM^+}:

\begin{lemma}
    \label{Lemma: L Delta_0 truth decidable}
    Let $\phi(\bar x)$ be a $\Delta_0$-formula. There is an $\SRM_\LL$-program $p$ such that $p(\godel{\phi},\bar x) = 1$ if $L \vDash \phi$ and $p(\godel{\phi},\bar x) =0$ if $L \vDash \neg \phi$.
\end{lemma}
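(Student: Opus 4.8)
The plan is to observe that the construction in the proof of \Cref{Lemma: Delta_0 truth decidable} relativises verbatim to $\LL$. Concretely, I would run the very same recursive machine, with the well-order $<_\tau$ replaced everywhere by $<_\LL$, and argue that on constructible inputs $\bar x$ it correctly decides whether $\LL \vDash \phi(\bar x)$.

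First I would reinstate the recursion on the structure of the $\Delta_0$-formula $\phi$. The base cases $\godel{x_i = x_j}$ and $\godel{x_i \in x_j}$ are decided by the equality- and membership-subroutines, which are $\SRM_\LL$-computable; the propositional connectives are handled by recursive calls exactly as before; and a bounded quantifier $\exists x \in a\,\psi(x)$ (dually $\forall x \in a\,\psi(x)$) is handled by a bounded search through $a$, successively applying $\mathtt{TAKE}$ and $\mathtt{REMOVE}$ and recursively evaluating $\psi$ on each element taken. Since \Cref{Lemma: Delta_0 truth decidable} already establishes that this machinery halts and performs the search correctly on arbitrary sets, the only genuinely new points are (a) that every intermediate register value remains constructible, so that the program is a legitimate $\SRM_\LL$-program, and (b) that what the machine computes is truth in $\LL$ rather than in $\VV$.

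For (a), note that the only set-forming operations applied to register contents are $\mathtt{ADD}$ (yielding $R_j \cup \{R_i\}$), $\mathtt{REMOVE}$ (yielding $R_j \setminus \{R_i\}$), $\mathtt{TAKE}$ (the $<_\LL$-least element of $R_i$), together with the pairing operations from \Cref{Lemma: basic SRM computable functions and predicates}. Each of these sends constructible inputs to constructible outputs: $\LL$ is transitive, so $\mathtt{TAKE}$ returns a constructible set, and $\LL$ is closed under adjoining singletons, set difference, and pairing. Hence, starting from constructible inputs, all register values stay inside $\LL$. For (b), the crucial observation --- and the main (if mild) obstacle --- is that, by transitivity of $\LL$, every element $b$ of a constructible set $a$ is itself constructible, so the bounded quantifier $\exists x \in a$ ranges over exactly the same set whether interpreted in $\VV$ or in $\LL$. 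This is precisely $\Delta_0$-absoluteness for the transitive class $\LL$, and it is the step that turns the $\VV$-decision of \Cref{Lemma: Delta_0 truth decidable} into an $\LL$-decision here. Correctness then follows by induction on the build-up of $\phi$, using absoluteness at each quantifier step; termination and the bookkeeping of register indices are inherited unchanged from the proof of \Cref{Lemma: Delta_0 truth decidable}. I expect no real difficulty, as the lemma is exactly the promised relativisation to $\LL$.
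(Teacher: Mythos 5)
Your proposal is correct and matches the paper's approach: the paper gives no explicit proof, simply asserting that all earlier results (including \Cref{Lemma: Delta_0 truth decidable}) relativise to $\LL$ once $<_\tau$ is replaced by $<_\LL$. You fill in exactly the details that relativisation requires --- closure of $\LL$ under the register operations and $\Delta_0$-absoluteness for the transitive class $\LL$ --- so there is nothing to object to.
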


\begin{corollary}
    \label{Theorem: D for SRM^+_L}
    There is a non-$\SRM^+_\LL$-computable set-theoretic predicate $D(x,y)$, expressible in the language of set theory, such that $D_z$ is not $\SRM^+_\LL$-computable in $D^z$.
\end{corollary}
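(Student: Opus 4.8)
The plan is to relativise the proof of \Cref{Theorem: D for SRM^+} to the constructible universe, replacing the global well-order $<_\tau$ everywhere by the $\Sigma_1$-definable well-order $<_\LL$. As already indicated above, every ingredient of that proof---the inverse Gödel pairing, the bijection $F_\tau$ of \Cref{Proposition: V re} (now built from $<_\LL$), the decidability of $\Delta_0$-truth, and the diagonal stage-by-stage extension of the partial predicate stored in $R_{init}$---relativises verbatim to $\SRM^+_\LL$, using \Cref{Lemma: L Delta_0 truth decidable} and the $\LL$-version of \Cref{Proposition: V re} in place of their unrelativised counterparts. This already yields a predicate $D$ on $\LL \times \LL$ with the relative non-computability property: for every $z$, the predicate $D_z$ is not $\SRM^+_\LL$-computable in $D^z$.

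The genuinely new requirement is \emph{expressibility in the language of set theory}, i.e. that $D$ be a class definable by an $\{\in\}$-formula with no auxiliary symbol for the well-order. Here two observations do the work. First, since $<_\LL$ is $\Sigma_1$-definable, the $\LL$-version of $F_\tau$ and, more generally, the graph of any $\SRM^+_\LL$-computation are definable over $\LL$ by set-theoretic formulas. Second---and this is the point where the construction genuinely departs from the oracle-based description in \Cref{Theorem: D for SRM^+}---the appeal to the halting oracle $H$ can be dispensed with at the level of definability: the statement ``there is a successful computation of $e$ on input $(x,z)$ relative to $D_{init}^z$'' is itself expressible, since a successful computation is a \emph{set} (a sequence of configurations of ordinal length), so both the existential quantifier over such computations and the existential quantifier over the set-sized $z$-extensions $D_{init}$ are first-order over $\LL$. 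At each stage the register $R_{init}$ holds only a set of values (each stage enlarges it by at most set-many), so every step of the construction is definable, and $D(x,y)$ is then expressed by ``running the definable construction until the value of $(x,y)$ is decided, and that value is $1$''. This provides the required $\{\in\}$-formula.

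Finally, the non-$\SRM^+_\LL$-computability of $D$ itself follows from the relative statement by a trivial reduction: were $D$ an $\SRM^+_\LL$-computable binary predicate, then for any fixed $z$ the unary predicate $D_z(x) = D(x,z)$ would be $\SRM^+_\LL$-computable (with $z$ as a parameter), and hence \emph{a fortiori} $\SRM^+_\LL$-computable in $D^z$ (an oracle machine may simply never query its oracle), contradicting the property established in the first paragraph.

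I expect the main obstacle to lie not in the diagonalisation---which is a faithful copy of the argument for \Cref{Theorem: D for SRM^+}---but in carefully checking the definability bookkeeping of the second paragraph: that replacing the halting oracle by the $\Sigma_1$ statement ``a successful computation exists'' leaves each stage of the construction, and therefore the whole predicate $D$, uniformly definable over $\LL$, and that $R_{init}$ indeed remains set-sized throughout so that all the relevant quantifiers stay first-order.
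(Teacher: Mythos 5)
Your proposal is correct and follows essentially the same route as the paper, which derives this corollary simply by remarking that all preceding results about $\SRM^+$s (in particular \Cref{Theorem: D for SRM^+}) relativise to $\LL$ once $<_\tau$ is replaced by the $\Sigma_1$-definable well-order $<_\LL$. Your additional bookkeeping---observing that the halting oracle reduces to a definable existential over set-sized computations, so that $D$ is given by an $\{\in\}$-formula, and that non-computability of $D$ itself follows from the relative statement by a trivial reduction---is exactly the (unwritten) content the paper relies on.
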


\section{Realisability}
\label{Section: Realisability}

We will now define a notion of realisability based on $\SRM^+$s, and observe a few proof-theoretic consequences for $\CZF$.

\begin{definition}
    We define the realisability relation $\Vdash$ recursively for an $\SRM^{(+),(O)}_{(\LL)}$ $r$ as follows:
    \begin{enumerate}
        \item $r \Vdash a \in b$ if and only if $a \in b$;
        \item $r \Vdash a = b$ if and only if $a = b$;
        \item $r \Vdash \phi_0 \wedge \phi_1$ if and only if $r(0) \Vdash \phi_0$ and $r(1) \Vdash \phi_1$;
        \item $r \Vdash \phi_0 \vee \phi_1$ if and only if $r(1) \Vdash \phi_{r(0)}$;
        \item $r \Vdash \phi_0 \rightarrow \phi_1$ if and only if whenever $s \Vdash \phi_0$, then $r(s) \Vdash \phi_1$;
        \item $r \Vdash \exists x \phi(x)$ if and only if $r(1) \Vdash \phi(r(0))$;
        \item $r \Vdash \forall x \phi(x)$ if and only if $r(a) \Vdash \phi(a)$ for every set $a$.
    \end{enumerate}
    We say that $\phi$ is $\SRM$-realisable if and only if there is an $\SRM$ realising $\phi$. Similarly, we say that $\phi$ is $\SRM^+$-realisable if and only if there is an $\SRM^+$ realising $\phi$; and so for $\SRM^{+,O}$, $\SRM^+_\LL$, and $\SRM^{+,O}_\LL$.
\end{definition}

This could be extended to infinitary languages as done by \citet{CarlGaleottiPassmann2020}. Analogously to (i) and (ii), one could give realisability semantics to the global well-order $<_\tau$. 

\begin{theorem}
    $\SRM^{(+),(O)}_{(\LL)}$-realisability is sound for intuitionistic logic.
\end{theorem}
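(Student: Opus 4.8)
The plan is to prove soundness by establishing the standard collection of ``realiser construction'' lemmas and then proceeding by induction on the length of a derivation in intuitionistic first-order logic, using any convenient proof calculus (say, natural deduction or a Hilbert-style system). The crucial point underlying the whole argument is that the realisability clauses are phrased entirely in terms of applying an $\SRM^{(+),(O)}_{(\LL)}$ $r$ to inputs and reading off components $r(0)$, $r(1)$, $r(s)$, $r(a)$; since \Cref{Lemma: basic SRM computable functions and predicates} guarantees that ordered pairing, projections, and function application are all $\SRM^+$-computable, the manipulations needed to combine and decompose realisers are themselves carried out by $\SRM^{(+)}$s. Thus I first record a short lemma to the effect that the standard combinatory operations---forming pairs $\seq{r,s}$, composing machines, currying and uncurrying, and in particular $\SRM$-analogues of the combinators $\mathsf{k}$ and $\mathsf{s}$---are available as $\SRM^{(+),(O)}_{(\LL)}$-programs, so that ``$\lambda$-abstraction'' over an input is a definable operation on codes.

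With this combinatory machinery in place, the plan is to verify that each axiom scheme of $\IQC$ is realised and that the inference rules preserve realisability. Concretely, for modus ponens I observe that if $r \Vdash \phi \rightarrow \psi$ and $s \Vdash \phi$, then by clause (v) the machine computing $r(s)$ realises $\psi$, and this composite is itself an $\SRM^{(+)}$ by function application. For the implicational axioms $\phi \rightarrow (\psi \rightarrow \phi)$ and the $\mathsf{s}$-axiom I build the realiser directly from the $\mathsf{k}$- and $\mathsf{s}$-combinators of the first lemma. The conjunction, disjunction, and quantifier axioms are handled by the evident pairing/projection constructions: for $\wedge$-introduction the realiser is $\lambda$-abstracted pairing $s \mapsto t \mapsto \seq{s,t}$; for $\vee$-introduction one tags with $0$ or $1$ via clause (iv); for the $\exists$-introduction axiom $\phi(t) \rightarrow \exists x\, \phi(x)$ the realiser maps $s$ to $\seq{t,s}$ by clause (vi); and the $\forall$-elimination and the universal/existential quantifier rules use clause (vii) and clause (vi) together with the fact that a machine can take an arbitrary set $a$ as input. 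The rule for $\exists$-elimination and the universal generalisation rule require that a realiser uniform in the witness or the universally quantified variable can be produced, which is exactly what abstraction over an input provides. Finally, the $\bot$-axiom $\bot \rightarrow \phi$ holds vacuously since $\bot$ has no realiser, so any machine realises it.

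The main obstacle, and the place where genuine care is needed, is ensuring that all the ``$\lambda$-abstracted'' realisers constructed in the inductive step are genuinely \emph{total} $\SRM^{(+),(O)}_{(\LL)}$-programs of the right form, rather than merely informal recipes: the clauses (v) and (vii) quantify over \emph{all} potential realisers $s$ and \emph{all} sets $a$, so the witnessing program must be a single fixed code that behaves correctly on every input. This is precisely why the combinatory completeness lemma (the existence of $\mathsf{s}$, $\mathsf{k}$, and hence abstraction) is isolated first; once it is available, each case reduces to exhibiting a closed combinatory term built from the basic computable operations of \Cref{Lemma: basic SRM computable functions and predicates}, and totality is inherited from the totality of those basic operations together with the fact that pairing and projection never loop. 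A secondary point to check is that the argument is uniform across the variants $\SRM$, $\SRM^+$, $\SRM^{+,O}$, $\SRM^+_\LL$, and $\SRM^{+,O}_\LL$: since none of the logical axioms or rules invokes the power-set operation, the oracle, or the restriction to $\LL$, the same combinatory terms work in every variant, so a single induction establishes the theorem in all cases simultaneously.
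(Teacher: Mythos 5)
Your proposal is correct and follows essentially the same route as the paper, which simply notes that soundness is ``a standard argument'' established by realising each axiom of a Hilbert-style formalisation of $\IQC$ and checking that \emph{modus ponens} preserves realisability. Your elaboration via combinatory completeness ($\mathsf{k}$, $\mathsf{s}$, and $\lambda$-abstraction over codes) and the explicit attention to totality and to uniformity across the variants $\SRM$, $\SRM^+$, $\SRM^{+,O}$, $\SRM^+_\LL$, $\SRM^{+,O}_\LL$ is exactly the detail the paper leaves implicit.
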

\begin{proof}
    This is a standard argument and can be established, for example, by providing a realiser for every axiom in a Hilbert-style formalisation of $\IQC$ and showing that \textit{modus ponens} is valid. The latter follows immediately from the definition of the relisability relation.
\end{proof}

\begin{lemma}
    \label{Lemma: Sigma_1 realisability is truth}
    Let $\phi(\bar x)$ be a $\Sigma_1$-formula. Then there is some realiser $r \Vdash \phi(\bar x)$ if and only if $\VV \vDash \phi(\bar x)$.
\end{lemma}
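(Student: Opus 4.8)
The plan is to argue by induction on the structure of the $\Sigma_1$-formula $\phi$, recalling that the $\Sigma_1$-formulas are generated from $\Delta_0$-formulas by the connectives $\wedge$ and $\vee$, the bounded quantifiers $\exists x \in a$ and $\forall x \in a$, and the unbounded existential quantifier $\exists x$. For each connective I would prove the two implications together. Note that the argument is essentially insensitive to the machine class: for the forward (``soundness'') direction I only use that the relevant applications of the realiser are defined, so it applies verbatim to every $\SRM^{(+),(O)}_{(\LL)}$; for the backward direction the realisers I construct already live in the weakest class $\SRM$, and hence in all the others.

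For the forward direction, realisability implies truth, I would unfold the clauses of $\Vdash$. The atomic cases are immediate from clauses (i) and (ii). If $r \Vdash \phi_0 \wedge \phi_1$ then $r(0) \Vdash \phi_0$ and $r(1) \Vdash \phi_1$, so both conjuncts are true by the induction hypothesis; the disjunction case is similar, using that $r(0) \in \set{0,1}$ selects a disjunct $\phi_{r(0)}$ which is realised, hence true. For an unbounded existential, $r \Vdash \exists x\, \phi(x)$ gives $r(1) \Vdash \phi(r(0))$, so $\phi(r(0))$ is true and witnesses $\exists x\, \phi(x)$. Reading $\exists x \in a$ as $\exists x(x \in a \wedge \cdot)$ and $\forall x \in a$ as $\forall x(x \in a \to \cdot)$, the bounded cases reduce to these; for bounded $\forall$ one observes that whenever $b \in a$ the atom $b \in a$ is realised by anything, so the implication clause hands back a realiser of $\phi(b)$, which is true by the induction hypothesis, and this holds for every $b \in a$.

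For the backward direction, truth implies realisability, I would build a realiser by recursion on $\phi$, selecting all witnesses \emph{non-uniformly} from the (classical) fact that $\VV \vDash \phi(\bar x)$. A true atom is realised by any machine. For $\phi_0 \wedge \phi_1$ I pair realisers of the true conjuncts supplied by the induction hypothesis; for $\phi_0 \vee \phi_1$ I pick the disjunct $i$ that actually holds in $\VV$ and output $\seq{i,s}$ with $s \Vdash \phi_i$. For $\exists x\, \phi(x)$ I fix a witness $b$ with $\VV \vDash \phi(b)$, take $s \Vdash \phi(b)$ from the induction hypothesis, and hard-code $b$ and $s$ as parameters so that $r(0) = b$ and $r(1) = s$; the bounded existential is the same with $b$ chosen inside $a$. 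All of the required bundling (ordered pairs, constant maps, fixing set parameters) is $\SRM$-computable by \Cref{Lemma: basic SRM computable functions and predicates}, and emphatically no \emph{search} or decision of truth is needed, because we are only asserting the \emph{existence} of a realiser.

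The one case that needs more than hard-coding a single witness is the bounded universal quantifier $\forall x \in a\, \phi(x)$. Here I must produce a \emph{single} machine $r$ with $r(b) \Vdash (b \in a \to \phi(b))$ for every set $b$ simultaneously. For $b \notin a$ the antecedent $b \in a$ is never realised, so any (defined) output works and I let $r$ test membership, which is primitive, before returning some default; for $b \in a$ the induction hypothesis gives some $s_b \Vdash \phi(b)$, and I need $r(b)$ to be a code for the constant map returning $s_b$. The crucial point is that the assignment $b \mapsto s_b$ has domain the \emph{set} $a$, so it is a set-sized function and therefore $\SRM$-computable, even though the $s_b$ were chosen non-uniformly; composing this with the constant-map construction yields the desired total $r$. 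I expect this interplay — reducing the apparently class-sized, non-uniform choice of realisers under a bounded universal to a set-sized, hence computable, function — to be the main obstacle, and it is exactly the place where the restriction to \emph{bounded} universal quantifiers in the definition of $\Sigma_1$ is essential.
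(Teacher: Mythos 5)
Your proposal is correct and follows essentially the same route as the paper: the paper's proof of this lemma simply defers to the "more intricate" Beth-semantics version (\Cref{Lemma: Sigma_1 Beth-realisability is truth in L}), whose induction proceeds exactly as you describe, including the key step of realising a bounded universal $\forall x \in a\,\phi(x)$ by hard-coding the set-sized (hence $\SRM$-computable) assignment $b \mapsto s_b$ of non-uniformly chosen realisers as a parameter. The only cosmetic difference is that the paper treats implication as a separate induction case rather than only in the guarded form $x \in a \to \phi$, but your simultaneous two-directional induction handles that case by the same argument you give for the bounded universal.
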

\begin{proof}
    This is a straightforward induction on $\Sigma_1$-formulas. We will prove a more intricate version of this lemma below, see \Cref{Lemma: Sigma_1 Beth-realisability is truth in L}.
\end{proof}

\begin{theorem}
    \label{Theorem: CZF realisable}
    The axioms (and schemes) of extensionality, pairing, union, infinity, collection, $\in$-induction, and $\Delta_0$-separation are $\SRM$-realisable. The axiom of choice, $\mathrm{AC}$, is $\SRM$-realisable. The axioms of power set and strong collection are $\SRM^+$-realisable.
    In conclusion, $\IKP + \mathrm{AC}$ is $\SRM$-realisable, and $\CZF + \mathrm{PowerSet} + \mathrm{AC}$ is $\SRM^+$-realisable. Moreover, $\IKP + \mathrm{AC}$ is $\SRM_\LL$-realisable, and $\CZF + \mathrm{PowerSet} + \mathrm{AC}$ is $\SRM^+_\LL$-realisable.
\end{theorem}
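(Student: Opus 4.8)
The plan is to construct, for each axiom and for each instance of a scheme, an explicit (parameter-free or set-parametrised) realiser, using the catalogue of basic computable operations of \Cref{Lemma: basic SRM computable functions and predicates}, the decidability of $\Delta_0$-truth from \Cref{Lemma: Delta_0 truth decidable}, and the coincidence of $\Sigma_1$-realisability with truth from \Cref{Lemma: Sigma_1 realisability is truth}. Because the atomic clauses (i) and (ii) make realisability of $a \in b$ and $a = b$ agree with genuine truth in $\VV$, a realiser for an implicational or universally quantified statement only has to \emph{produce} a correct witnessing set together with a realiser for the (typically $\Delta_0$ or already-handled) matrix; correctness of the witness itself is then inherited from truth in $\VV$. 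I would organise the axioms into two groups, according to whether their realisers must run arbitrary sub-realisers or merely decide atomic matters.

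For the first group -- extensionality, pairing, union, infinity, $\Delta_0$-separation, $\in$-induction, collection and $\AC$ -- plain $\SRM$s suffice. Extensionality is realised by a machine that ignores its input and returns a fixed value: if $s \Vdash \forall x (x \in a \leftrightarrow x \in b)$, then unwinding the atomic clauses shows $x \in a \leftrightarrow x \in b$ holds in $\VV$ for all $x$, whence $a = b$ is true and any output realises it. For pairing, union and infinity I would compute the required set ($\{a,b\}$, $\bigcup a$, $\omega$) by \Cref{Lemma: basic SRM computable functions and predicates} and realise the defining $\Delta_0$-biconditional through \Cref{Lemma: Delta_0 truth decidable}; $\Delta_0$-separation is handled by forming $\{x \in a : \varphi(x)\}$ via the same decidability. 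For $\in$-induction one recurses along the well-founded membership relation, feeding the step-realiser its argument and the recursively-built realiser for $\forall y \in x\, \varphi(y)$; since an $\SRM$-computation can be simulated step-by-step (with $\liminf$ at limits) by an $\SRM$, this running of sub-realisers stays within $\SRM$. Collection is realised by iterating over the set-sized bound $a$, running the antecedent realiser on each $x \in a$ to extract a witness $y_x$, and accumulating $b = \{y_x : x \in a\}$; for the bounded matrix occurring in $\IKP$ this remains $\SRM$-computable. Finally $\AC$ is realised using the $\SRM$-decidability of $<_\tau$: a choice function is obtained by sending each inhabited member of $a$ to its $<_\tau$-least element via $\TAKE$.

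The second group -- power set and strong collection -- is where $\SRM^+$ is genuinely needed, and strong collection is the main obstacle. Power set is immediate: the realiser computes $\Pow(a)$ with the $\POW$-operation and realises the membership biconditional by $\Delta_0$-decidability. For strong collection one must, given a realiser $s$ of $\forall x \in a\, \exists y\, \varphi(x,y)$ for an \emph{arbitrary} formula $\varphi$, run $s$ on every $x \in a$ to obtain both a witness $y_x$ and a realiser $t_x$ of $\varphi(x,y_x)$, assemble the image set $b = \{y_x : x \in a\}$, and then realise both conjuncts of the consequent -- the second, $\forall y \in b\, \exists x \in a\, \varphi(x,y)$, forcing one to invert the map $x \mapsto y_x$ and return the appropriate $t_x$. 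Because $\varphi$ is unrestricted, the sub-realisers $s$ are arbitrary $\SRM^+$-programs, so running them requires the universal $\SRM^+$-machine, and collecting their outputs into the set $b$ by a transfinite iteration over $a$ is precisely the step unavailable to a plain $\SRM$; this is what pins strong collection at the level of $\SRM^+$.

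Combining the individual realisers then yields that $\IKP + \AC$ is $\SRM$-realisable and that $\CZF + \mathrm{PowerSet} + \AC$ is $\SRM^+$-realisable -- note that subset collection, the remaining $\CZF$-axiom, need not be realised directly, since $\CZF^- + \mathrm{PowerSet}$ proves it by \Cref{Fact: PowerSet implies SubsetCollection} and realisability is sound for intuitionistic logic. For the final ``moreover'' claim I would re-run all of the above with $<_\tau$ replaced by the $\Sigma_1$-definable order $<_\LL$, letting the quantifiers range over $\LL$ and invoking the constructible counterpart \Cref{Lemma: L Delta_0 truth decidable} (together with the $\LL$-relativised basic operations) in place of their unrelativised versions. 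Since each construction above is absolute between $\VV$ and $\LL$ for the $\Delta_0$ matrices involved and $\LL$ satisfies all the axioms in question, the same realisers work over $\LL$, giving $\SRM_\LL$- and $\SRM^+_\LL$-realisability respectively.
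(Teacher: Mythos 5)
Your proposal is correct and follows essentially the same route as the paper's own proof: explicit realisers assembled from \Cref{Lemma: basic SRM computable functions and predicates}, $\Delta_0$-decidability (\Cref{Lemma: Delta_0 truth decidable}) together with \Cref{Lemma: Sigma_1 realisability is truth} for $\Delta_0$-separation, recursion along $\in$ for $\in$-induction, the $\TAKE$-operation for $\AC$, the $\POW$-operation for power set, subset collection obtained from power set via \Cref{Fact: PowerSet implies SubsetCollection} and soundness, and relativisation/absoluteness for the $\LL$-versions. One small quibble: your diagnosis that the ``transfinite iteration over $a$'' is what forces strong collection up to $\SRM^+$ is misplaced---iterating over a set-sized register content is an ordinary $\SRM$ operation (it is how equality and unions are computed); what matters is that in $\SRM^+$-realisability the sub-realisers $s(y)$ being executed are themselves $\SRM^+$-programs---but this does not affect the correctness of your construction.
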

\begin{proof}
    It is straightforward to construct a realiser for the extensionality axiom.  
    For the empty set axiom, let $r$ be an $\SRM$ that returns the empty set on input $0$ and the identity function on input $1$. Then $r(1) \Vdash \forall y (y \in r(0) \rightarrow \bot)$ because $\not \Vdash_w y \in \emptyset$ for all $w \in P$ and $y \in \VV$. Hence, $r \Vdash \exists x \forall y (y \notin x)$.
    A realiser for the union axiom is an $\SRM$ $r$ such that, for every $a \in \VV$, $r(a)(0) = \bigcup a$, using \Cref{Lemma: basic SRM computable functions and predicates}, $r(a)(1)(x)(0) = \id$, and $r(a)(1)(x)(1) = \id$ for every $x$. The infinity axiom is realised by an $\SRM$ $r$ with $r(0) = \omega$, $r(1)(x)(0) = \id$, and $r(1)(x)(1) = \id$ for every $x \in \VV$. Using the power set operation provided by $\SRM^+$-programs, it is straightforward to construct a realiser of the power set axiom. Note that the subset collection schema is a consequence of the power set axiom.
    
    Let us consider $\Delta_0$-separation next, i.e. the schema consisting of
    $$
        \forall x \exists y \forall z (z \in y \leftrightarrow z \in x \wedge \phi(x)),
    $$
    where $\phi(x)$ is a $\Delta_0$-formula. By combining \Cref{Lemma: L Delta_0 truth decidable,Lemma: Sigma_1 realisability is truth}, we know that $\Vdash \phi(x)$ if and only if $p(\godel{\phi},x) = 1$, and $p(\godel{\phi},x) = 0$ in case $\not \Vdash \phi(x)$. Hence, we can compute the witnessing set $y$ by conducting a bounded search through $x$ and collecting all $z \in x$ such that $p(\godel{\phi},z) = 1$. It is then trivial to realise $\forall z (z \in y \leftrightarrow z \in x \wedge \phi(x))$ because $\phi$ is a $\Delta_0$-formula.
    
    Consider the schema of $\in$-induction next:
    $$
        \forall x (\forall y \in x \phi(z) \rightarrow \phi(x)) \rightarrow \forall x \phi(x).
    $$
    An $\SRM$ $r$ is a realiser for this if and only if, if $s \Vdash \forall x (\forall y \in x \phi(z) \rightarrow \phi(x))$, then $r(s) \Vdash \forall x \phi(x)$. Now, in this situation, $s$ allows us to iteratively construct realisers for every $x \in \VV$ by successively building realisers for every $\VV_\alpha$. Hence, given $x \in \VV$, we just compute realisers until we reach $x$ and then output the realiser for $\phi(x)$.
    
    Next, we consider the strong collection schema:
    $$
        \forall x [(\forall y \in x \exists z \phi(y,z)) \rightarrow \exists w (\forall y \in x \exists z \in w \phi(y,z) \wedge \forall z \in w \exists y \in x \phi(y,z))],
    $$
    for all formulas $\phi(x,y)$ for which $w$ is not free. Given $x \in \VV$, let $r(x)(s)$, for $s \Vdash \forall y \in x \exists z \phi(z,y)$, be an $\SRM$ that computes a set consisting of all $s(y)(0)$ for every $y \in x$, and returns this set on input $0$. Using $s$, it is straightforward to construct a realiser $r(x)(s)(1) \Vdash \forall y \in x \exists z \in r(x)(s)(0) \ \phi(y,z) \wedge \forall z \in r(x)(s)(0) \ \exists y \in x \phi(y,z))$. 
    
    Finally, consider the axiom of choice,
    $$
        \forall x ((\forall y \in x \exists z \ z \in y) \rightarrow \exists f \forall y \in x \ f(y) \in y).
    $$
    This axiom states that whenever $x$ consists of non-empty sets, then there is a choice function $f$ on $x$. Using \Cref{Lemma: basic SRM computable functions and predicates}, it is straightforward to construct an $\SRM$ that computes such a choice function: for every element of $y \in x$, use the $\mathtt{TAKE}$-operation to obtain some $z \in y$. Then add $(x,y)$ to the register in which we build the choice function. 
    
    The corresponding results for $\SRM_\LL$ and $\SRM^+_\LL$ are obtained through relativisation and absoluteness properties (or by observing that the exact same realisers still do the job).
\end{proof}

It turns out that $\IZF$ is not $\SRM^+$-realisable.

\begin{theorem}
    There is an instance of the separation axiom that is not $\SRM^+$-realisable. In conclusion, $\IZF$ is not $\SRM^+$-realisable.
\end{theorem}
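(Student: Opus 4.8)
The plan is to exhibit a single instance of the separation scheme whose realisability would let an $\SRM^+$ decide a predicate we already know to be $\SRM^+$-undecidable, namely the halting problem of \Cref{Proposition: SRM Halting problem}. Concretely, I would fix a $\Sigma_1$-formula $\phi(z)$ expressing ``the $\SRM^+$ coded by $z$ halts'', say $\phi(z) \equiv \exists c\,(c \text{ is a successful computation of the program coded by } z)$; the matrix is decidable and its quantifiers are bounded by $c$, so $\phi$ is genuinely $\Sigma_1$, and by \Cref{Lemma: Sigma_1 realisability is truth} we have ${\Vdash}\,\phi(z)$ if and only if $\VV \vDash \phi(z)$. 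Let $I$ denote the corresponding separation instance
$$
    \forall x \exists y \forall z\,(z \in y \leftrightarrow z \in x \wedge \phi(z)),
$$
and suppose, towards a contradiction, that some $\SRM^+$ $r$ realises $I$.

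The heart of the argument is to read membership in the separated set directly off the realiser. Given a set $a$, I apply $r$ to the singleton $\set a$ (computable by \Cref{Lemma: basic SRM computable functions and predicates}) to obtain $r(\set a) \Vdash \exists y \forall z\,(z \in y \leftrightarrow z \in \set a \wedge \phi(z))$; I write $y := r(\set a)(0)$ for the witnessing set and $w := r(\set a)(1)(a)$ for the realiser of the biconditional at $a$, so that $w(0) \Vdash (a \in y) \to (a \in \set a \wedge \phi(a))$ and $w(1) \Vdash (a \in \set a \wedge \phi(a)) \to (a \in y)$. The decisive point is that, by clauses (i) and (ii) of the realisability definition, atomic statements are realised exactly when they are true, so any fixed $\SRM^+$ realises the true statements $a \in \set a$ and (when it holds) $a \in y$. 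Unwinding the two implications: if $a \in y$ holds --- which is $\SRM^+$-decidable, since membership is a primitive operation --- then feeding a trivial realiser of $a \in y$ into $w(0)$ yields a realiser of $a \in \set a \wedge \phi(a)$, whence $\phi(a)$ is realisable; conversely, if $\phi(a)$ is realisable then, pairing a realiser of it with a trivial realiser of $a \in \set a$ and applying $w(1)$, I obtain a realiser of $a \in y$, forcing $a \in y$ to be true. Combining both directions with \Cref{Lemma: Sigma_1 realisability is truth} gives, for every set $a$,
$$
    a \in y \iff {\Vdash}\,\phi(a) \iff \VV \vDash \phi(a).
$$

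This equivalence is the contradiction I am after: the right-hand side is the halting predicate, while the left-hand side is computed by an $\SRM^+$ that, on input $a$, forms $\set a$, runs $r$ to obtain $y = r(\set a)(0)$, and tests the primitive membership $a \in y$. Using $r$ as a subroutine, this is an $\SRM^+$-decision procedure for $\Set{z}{\VV \vDash \phi(z)}$, contradicting \Cref{Proposition: SRM Halting problem}. Hence no such $r$ exists, so $I$ is a non-$\SRM^+$-realisable instance of separation; since $I$ is an axiom of $\IZF$ and $\SRM^+$-realisability is sound for intuitionistic logic, $\IZF$ is not $\SRM^+$-realisable. The step that will require the most care is the bookkeeping in the middle paragraph --- producing the trivial realisers of the true atomic statements $a \in \set a$ and $a \in y$ uniformly, pairing them correctly, and feeding them to the right components of $w$ --- together with verifying that the halting formula is genuinely $\Sigma_1$ so that \Cref{Lemma: Sigma_1 realisability is truth} applies; once these are in place, the non-computability is immediate from the undecidability of the halting problem.
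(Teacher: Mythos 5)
Your proposal is correct and follows essentially the same route as the paper: both take a separation instance whose defining formula expresses the $\SRM^+$-halting predicate, extract the separated set from a hypothetical realiser, and derive a decision procedure contradicting \Cref{Proposition: SRM Halting problem}. The only quibble is your claim that the halting formula is ``genuinely $\Sigma_1$'': the $\POW$-clause in the definition of a successful $\SRM^+$-computation is not $\Delta_0$, so instead of invoking \Cref{Lemma: Sigma_1 realisability is truth} directly one needs the slightly more careful argument showing that the formula is realised iff true (this is exactly why the paper points to the proof of \Cref{Lemma: negative formula for D} rather than to the $\Sigma_1$-lemma).
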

\begin{proof}
    Consider the predicate $H(x,y)$ expressing that ``$x$ is an $\SRM^+$ that halts on input $y$''. One can easily construct a formula $\phi(x,y)$ such that $\phi(x,y)$ is realised if and only if $H(x,y)$ is true (see also the proof of \Cref{Lemma: negative formula for D} for a similar argument). Then let $s$ be a realiser of the following instance of the separation axiom:
    $$
        \forall x \forall y \forall z \exists w \forall u (u \in w \leftrightarrow (u \in z \wedge \phi(x,y))).
    $$
    We can then construct an $\SRM$ $r$ that does the following. Given $x$ and $y$, compute $w := s(x)(y)(1)(0)$ and return the result. By construction, $r(x,y) = 1$ just in case $H(x,y)$ holds, and $r(x,y) = 0$ otherwise. So $r$ is an $\SRM^+$ solving the $\SRM^+$ halting problem but this is impossible, see \Cref{Proposition: SRM Halting problem}.
\end{proof}

In fact, we have just seen that $\CZF + \mathrm{PowerSet}$ is $\SRM^+$-realisable. The following proposition shows that we cannot be more fine-grained: if there is an $\SRM$ realising the exponentiation axiom (possibly using an oracle), then we can already compute power sets. Recall that the axiom of exponentiation is a consequence of subset collection (\Cref{Fact: PowerSet implies SubsetCollection}).

\begin{proposition}
    Let $r$ be an $\SRM$, possibly using an oracle, such that $r$ realises the axiom of exponentiation, then there is an $\SRM$, using $r$ as an oracle, that computes power sets. 
\end{proposition}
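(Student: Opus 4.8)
The plan is to reduce computing $\Pow(x)$ to computing the function set ${}^{x}2$, where $2 = \set{\emptyset,\set{\emptyset}}$: subsets of $x$ correspond bijectively to characteristic functions $x \to 2$, via $S \mapsto \chi_S$ and $f \mapsto \Set{u \in x}{f(u) = \set{\emptyset}}$. Since $r$ realises exponentiation, it witnesses that ${}^{x}2$ exists, and I would turn this witness into $\Pow(x)$ using only basic operations.

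First I would extract the witness. Writing out the exponentiation axiom,
$$
    \forall a \forall b \exists c \forall f (f \in c \leftrightarrow \text{``$f$ is a function from $a$ to $b$''}),
$$
and unfolding the realisability clauses for $\forall$ and $\exists$, the hypothesis that $r$ realises this axiom gives that, for our fixed $x$, the set $c := r(x)(2)(0)$ is the proposed witness while $r(x)(2)(1)$ realises $\forall f (f \in c \leftrightarrow \text{``$f\colon x \to 2$''})$. An $\SRM$ with $r$ as an oracle can compute $c$: the constants $2$ and $0$ are $\SRM$-computable, a query to $r$ yields $r(x)$, and the further applications to $2$ and then to $0$ are carried out by simulating the resulting $\SRM$s step by step (this is the bookkeeping part of the argument).

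The mathematical core is to verify that $c$ is \emph{literally} equal to ${}^{x}2$. The predicate ``$f$ is a function from $x$ to $2$'' is $\Delta_0$ (unfold ordered pairs, domain, totality and single-valuedness using only bounded quantifiers), hence a $\Sigma_1$-formula, so by \Cref{Lemma: Sigma_1 realisability is truth} it is realisable precisely when true; likewise $f \in c$ is realisable precisely when true by clause (i) of the definition of $\Vdash$. Reading off the two implications realised by $r(x)(2)(1)(f)$, and using that true $\Delta_0$-facts are realisable, one obtains for every $f$ that $f \in c$ holds if and only if $f$ is a genuine function from $x$ into $2$; that is, $c = {}^{x}2$ on the nose.

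Finally I would reconstruct $\Pow(x)$ from $c$. Iterating through the set-many elements $f \in c$ by repeated $\TAKE$ and $\REMOVE$, an $\SRM$ computes for each $f$ the subset $\Set{u \in x}{f(u) = \set{\emptyset}}$ using function application and a bounded search (\Cref{Lemma: basic SRM computable functions and predicates}), and collects the results in an auxiliary register; by the bijection above, this register ends up holding exactly the subsets of $x$, i.e.\ $\Pow(x)$. The main obstacle is precisely the verification $c = {}^{x}2$: everything hinges on the passage from ``the exponentiation biconditional is realised'' to ``$c$ contains exactly the functions $x \to 2$'', which is the $\Delta_0$-realisability-equals-truth argument above. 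Once $c$ is in hand, the remaining manipulations are routine given \Cref{Lemma: basic SRM computable functions and predicates}.
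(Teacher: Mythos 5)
Your proposal is correct and follows essentially the same route as the paper: extract the witness $c = r(x)(2)(0)$ for the function set ${}^{x}2$, use the fact that ``$f$ is a function from $x$ to $2$'' is $\Delta_0$ together with \Cref{Lemma: Sigma_1 realisability is truth} to conclude $c = {}^{x}2$ exactly, and then recover $\Pow(x)$ from the characteristic functions. The paper's proof is the same argument (with $\set{0,1}$ in place of $2$), only stated more tersely.
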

\begin{proof}
    Let $r$ be a realiser of the axiom of exponentiation:
    $$
        \forall x \forall y \exists z \forall f (f \in z \leftrightarrow \text{``$f$ is a function from $x$ to $y$''}),
    $$
    where ``$f$ is a function from $x$ to $y$'' is expressed as a $\Delta_0$-formula. Then, given a set $a$, the set $b := r(a)(\set{0,1})(0)$ contains all $f$ for which there is a realiser of ``$f$ is a function from $x$ to $y$''. As this is a $\Delta_0$-formula, \Cref{Lemma: Sigma_1 realisability is truth} implies that $b$ consists of all functions from $a$ to $2$. It is now easy to compute the power set of $a$ as follows: for each element $f$ of $b$, compute the set consisting of exactly those $x \in a$ for which $f(a) = 1$. This results in the power set of $a$ because each subset of $a$ gives rise to its characteristic function contained in $b$.
\end{proof}

Our realisability semantics also allow to give an upper bound for $\Pi_2$-formulas provable in $\CZF$ in terms of the computable strength of $\SRM^+$.

\begin{theorem}
    Let $\phi$ be a $\Sigma_1$-formula. If $\CZF \vdash \forall x \exists y \phi(x,y)$, then there is an $\SRM^+$ $p$ such that $\VV \vDash \phi(x,p(x))$.
\end{theorem}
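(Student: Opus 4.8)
The plan is to combine the soundness of realisability with the fact that $\CZF$ is $\SRM^+$-realisable to extract a realiser of $\forall x \exists y \phi(x,y)$, then read a witnessing function off that realiser, and finally invoke \Cref{Lemma: Sigma_1 realisability is truth} to turn the realisability of the (still $\Sigma_1$) witnessing formula back into genuine truth in $\VV$.

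First I would note that, since all axioms and schemes of $\CZF$ are $\SRM^+$-realisable (\Cref{Theorem: CZF realisable}) and $\SRM^+$-realisability is sound for intuitionistic logic, every theorem of $\CZF$ is $\SRM^+$-realisable. In particular, from $\CZF \vdash \forall x \exists y \phi(x,y)$ we obtain an $\SRM^+$ $r$ with $r \Vdash \forall x \exists y \phi(x,y)$. By clause (vii) of the definition of $\Vdash$, for every set $a$ we have $r(a) \Vdash \exists y \phi(a,y)$, and hence, by clause (vi), $r(a)(1) \Vdash \phi(a, r(a)(0))$. Because a realiser of a universally quantified statement must be defined on every input, $r(a)$ and $r(a)(0)$ are defined for all $a$.

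Next I would define $p(x) := r(x)(0)$ and check it is an $\SRM^+$. The function $p$ is total by the previous remark, and it is $\SRM^+$-computable: with the fixed program $r$ hard-coded, $p$ first runs $r$ on input $x$ to obtain a machine code and then evaluates that machine on input $0$, both of which are available via function application (\Cref{Lemma: basic SRM computable functions and predicates}, item (xi)) together with a universal machine. Thus $p$ is an $\SRM^+$.

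Finally I would verify that $\VV \vDash \phi(x,p(x))$ for every set $a$: we have already exhibited the realiser $r(a)(1)$ of $\phi(a,p(a))$, and since $\phi$ is $\Sigma_1$ — and substituting the set $p(a)$ for the quantified variable $y$ preserves being $\Sigma_1$ — \Cref{Lemma: Sigma_1 realisability is truth} yields that $\phi(a,p(a))$ is realised if and only if $\VV \vDash \phi(a,p(a))$, giving $\VV \vDash \phi(a,p(a))$ as required. The argument is essentially a standard witness-extraction, so the only points needing genuine care, and the place I would expect the main (if modest) obstacle, are confirming that $p$ is total and $\SRM^+$-computable from the structure of the realiser, and checking that $\phi(a,p(a))$ remains $\Sigma_1$ so that \Cref{Lemma: Sigma_1 realisability is truth} can be applied.
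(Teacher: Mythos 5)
Your proposal is correct and follows the same route as the paper: extract a realiser $r \Vdash \forall x \exists y\, \phi(x,y)$ from \Cref{Theorem: CZF realisable} and soundness, set $p(x) := r(x)(0)$, and apply \Cref{Lemma: Sigma_1 realisability is truth} to convert realisability of the $\Sigma_1$-formula $\phi(x,p(x))$ into truth in $\VV$. The paper's proof is just a terser version of yours; your added care about totality and $\SRM^+$-computability of $p$ is a reasonable filling-in of details the paper leaves implicit.
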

\begin{proof}
    If $\CZF \vdash \forall x \exists y \phi(x,y)$, then, by \Cref{Theorem: CZF realisable}, there exists an $\SRM^+$ $r \Vdash \forall x \exists y \phi(x,y)$. Take $p(x)$ to be the $\SRM^+$ to compute $r(x)(0)$. Then, for all $x$, $\phi(x,p(x))$ is realisable. As $\phi$ is a $\Sigma_1$-formula, it follows with \Cref{Lemma: Sigma_1 realisability is truth} that $\VV \vDash \phi(x,p(x))$.
\end{proof}

Finally, we can use $\SRM^+$-realisability to easily determine the admissible rules of $\CZF$. A proof of Carl, Galeotti and Passmann \cite[Theorem 56]{CarlGaleottiPassmann2020} can be adapted to work here.

\begin{theorem}
    The propositional admissible rules of $\CZF$ are exactly the propositional admissible rules of intuitionistic logic.
\end{theorem}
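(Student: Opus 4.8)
The plan is to derive the statement from Iemhoff's criterion, \Cref{Theorem: Iemhoff}: it suffices to verify that $\CZF$ has the disjunction property and that the restricted Visser's rules $V_n$ are propositionally admissible in $\CZF$, i.e.\ $V_n^a \vsim_{\CZF} V_n^c$ for every $n$. Both obligations I would discharge using $\SRM^+$-realisability, adapting the argument of \citet[Theorem~56]{CarlGaleottiPassmann2020} for $\IKP$; by \Cref{Theorem: CZF realisable} the semantics is sound for $\CZF$, which is the only input about the theory that the whole scheme needs.

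For the disjunction property, suppose $\CZF \vdash \phi \vee \psi$ for sentences $\phi,\psi$. By \Cref{Theorem: CZF realisable} the sentence $\phi \vee \psi$ is realisable, and by tracing the soundness proof one extracts from a given $\CZF$-derivation a \emph{concrete}, parameter-free $\SRM^+$ $e$ (codeable as a natural number) for which $\CZF$ itself proves the arithmetised statement ``$e$ realises $\phi \vee \psi$''; the key point is that soundness of $\Vdash$ can be formalised inside $\CZF$. Since $e$ is closed, $\CZF$ decides the value $e(0) \in \set{0,1}$ and hence proves that $e(1)$ realises the selected disjunct. To descend from realisability back to provability I would pass to the truth-tracking variant of $\Vdash$ — its atomic clauses (i)--(ii) already build in truth, cf.\ \Cref{Lemma: Sigma_1 realisability is truth} — for which ``$x$ realises $\chi$'' provably implies $\chi$ for \emph{every} $\chi$, including formulas with implications. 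This yields $\CZF \vdash \phi$ or $\CZF \vdash \psi$. (Alternatively, the disjunction property of $\CZF$ may be imported as a known result, but obtaining it from the present semantics keeps the proof self-contained.)

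For the admissibility of $V_n$, assume $\CZF \vdash (V_n^a)^\tau$ for a propositional translation $\tau$, and write $A := \bigl(\bigwedge_{i=1}^n (p_i \to q_i)\bigr)^\tau$ and $P_j := \tau(p_j)$, so that $(V_n^a)^\tau = A \to (P_{n+1} \vee P_{n+2})$ and $(V_n^c)^\tau = \bigvee_{j=1}^{n+2}(A \to P_j)$. By the same formalised-soundness mechanism together with the truth-tracking descent of the previous paragraph, it is enough to produce a single concrete $\SRM^+$ that $\CZF$ provably recognises as a realiser of $(V_n^c)^\tau$: deciding the disjunct index $j_0$ and descending then gives $\CZF \vdash A \to P_{j_0}$, whence $\CZF \vdash (V_n^c)^\tau$. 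In this way the entire problem reduces to a purely realisability-level statement, namely that \emph{from a realiser of $(V_n^a)^\tau$ one can effectively construct a realiser of $(V_n^c)^\tau$}.

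The main obstacle is exactly this last reduction. Realising the disjunction $(V_n^c)^\tau$ forces a commitment to one disjunct $A \to P_{j_0}$, whereas a realiser of $(V_n^a)^\tau$ only indicates how to choose between $P_{n+1}$ and $P_{n+2}$ \emph{once a realiser of $A$ has been supplied}; closing this gap is the delicate combinatorial core of Visser's theorem. Following \citet{CarlGaleottiPassmann2020}, I would resolve it by a self-referential construction that feeds suitably chosen ``test'' realisers into the given realiser of $(V_n^a)^\tau$ and diagonalises over the finitely many antecedents $p_i \to q_i$, which is made possible by the effective, self-interpreting nature of $\SRM^+$ (in particular the universal machines of \cref{Section: Set Register Machines}). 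Once the realiser-level validation of every $V_n$ is in place, \Cref{Theorem: Iemhoff} applies and yields ${\vsim_{\CZF}} = {\vsim_{\IPC}}$, as required.
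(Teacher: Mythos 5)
Your skeleton---\Cref{Theorem: Iemhoff} plus a realisability argument for the disjunction property and for the restricted Visser's rules, following \citet{CarlGaleottiPassmann2020}---is exactly the route the paper takes (its proof is essentially a one-line pointer to that earlier work, naming \emph{glued realisability} as the tool). The genuine gap is the bridge you build from realisability back to $\CZF$-provability. The intended device is a hybrid ``glued'' relation, defined in the classical meta-theory, whose clauses for $\vee$ and $\exists$ additionally demand $\CZF$-provability of the chosen disjunct (respectively, of the instantiated formula); one shows in the meta-theory that every $\CZF$-theorem has a glued realiser, and then the disjunction property and the admissibility of each $V_n$ are read off directly from the gluing clauses. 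Your substitute for this---formalising soundness of $\SRM^+$-realisability \emph{inside} $\CZF$, having $\CZF$ decide $e(0)$, and then ``descending'' from realisability to truth---does not work. First, $\SRM^+$-computation is a transfinite recursion over $\mathrm{V}$ (or $\mathrm{L}$) using power sets and a global well-ordering, carried out in a classical meta-theory; there is no reason $\CZF$ proves the arithmetised statement ``$e$ realises $\phi\vee\psi$,'' nor that it can evaluate $e(0)$, which is the output of a transfinite computation whose convergence is itself a classical fact. Second, ``realisability implies truth for every $\chi$'' is false: the paper has this only for $\Sigma_1$-formulas (\Cref{Lemma: Sigma_1 realisability is truth}), and $\SRM^+$-realisability validates principles (e.g.\ $\mathrm{AC}$, hence excluded middle for $\Delta_0$-formulas) that $\CZF$ does not prove, so no truth-tracking atomic clause will convert a realiser of $\phi$ into a $\CZF$-proof of $\phi$. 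Truth in $\mathrm{V}$ is in any case the wrong target: you need provability, and that is precisely what the gluing supplies.

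The same issue affects your treatment of the Visser rules: reducing the problem to ``from a realiser of $(V_n^a)^\tau$ effectively construct a realiser of $(V_n^c)^\tau$'' is not sufficient, because a plain realiser of $(V_n^c)^\tau$ carries no provability information; the construction of \citet{CarlGaleottiPassmann2020} that you defer to operates on \emph{glued} realisers, where committing to a disjunct $A \to P_{j_0}$ automatically comes with $\CZF \vdash (A \to P_{j_0})^\tau$. So the missing idea is not the combinatorial core (which you correctly outsource) but the gluing of realisability with provability itself.
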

\begin{proof}
    Using the fact that $\CZF$ is $\SRM^+$-realisable, we can prove this with glued realisability using \Cref{Theorem: Iemhoff}; almost exactly as we did in earlier joint work with \citet{CarlGaleottiPassmann2020}.
\end{proof}

\section{Beth Realisability Models}
\label{Section: Beth Realisability Models}

\subsection{Fallible Beth models}

In this section, we will make use of so-called \textit{fallible} Beth models because they satisfy a particular handy universal model theorem. 

\begin{definition}
    A \emph{fallible Beth frame} $(P,U)$ consists of a tree $P$ and an upwards closed set $U \subseteq P$ such that if every path through $p \in P$ meets $U$, then $p \in U$.
\end{definition}

\begin{definition}[Fallible Beth model]
    A \textit{fallible Beth model} $(P,U,D,I)$ for first-order logic consists of a fallible Beth tree $(P,U)$, domains $D_p$ for $p \in P$, and an interpretation $I_p$ of the language of first-order logic for each $p \in P$ such that:
    \begin{enumerate}
        \item $I_v(R) \subseteq I_w(R)$ for all $w \geq v$,
        \item $I_v(R) = D_v$ for all $v \in U$, and,
        \item if $R$ is an $n$-ary relation symbol, $\bar x \in D_v^n$ and on every path through $v$ there is some $w$ such that $\bar x \in I_w(R)$, then $\bar x \in I_v(R)$.
    \end{enumerate}
    A \textit{Beth model} is a fallible Beth model where $U = \emptyset$. If $p \in P$, then a \textit{bar for $p$} is a set $B \subseteq P$ such that every path through $p$ meets $B$. A \textit{$U$-bar for $p$} is a set $B \subseteq P$ such that $B \cup U$ is a bar for $p$. 
\end{definition}

\begin{definition}
    Let $(P,U,D,I)$ be a fallible Beth model and $v \in P$. We define by recursion on sentences in the language of first-order logic:
    \begin{enumerate}
        \item $v \Vdash \bot$ if and only if $v \in U$;
        \item $v \Vdash R(d_1,\dots,d_n)$ if and only if $(d_1,\dots,d_n) \in I_v(R)$;
        \item $v \Vdash A_0 \wedge A_1$ if and only if $v \Vdash A_0$ and $v \Vdash A_1$;
        \item $v \Vdash A_0 \vee A_1$ if and only if there is a bar $B$ for $v$ such that for every $w \in B$, $w \Vdash A_0$ or $w \Vdash A_1$;
        \item $v \Vdash A_0 \rightarrow A_1$ if and only if for every $w \geq v$, if $w \Vdash A_0$, then $w \Vdash A_1$;
        \item $v \Vdash \exists x A(x)$ if and only if there is a bar $B$ for $v$ such that for all $w \in B$, there is some $a \in D_w$ with $w \Vdash A(a)$;
        \item $v \Vdash \forall x A(x)$ if and only if for every $w \geq v$ and $a \in D_w$, $w \Vdash A(a)$.
    \end{enumerate}
\end{definition}

Note that, by this definition, if $v \in U$, then $v$ forces every formula trivially, i.e. the relation $\Vdash$ trivialises in $U$. By definition of $U$, it follows that if $v \notin U$ and $B$ is a $U$-bar for $v$, then $B \setminus U$ is non-empty. The following result of \citet[Chapter 13, Remark 2.6 and Theorem 2.8]{TroelstraVanDalen1988II} will be a crucial ingredient of our proof. 

\begin{theorem}
    \label{Theorem: Canonical fallible Beth model}
    Let $J$ be a recursively enumerable theory in intuitionistic first-order logic. Then there is a fallible Beth model $\mathcal{B}_J$ with constant domain $\omega$, based on the full binary tree of height $\omega$, such that $\mathcal{B} \Vdash A$ if and only if $J \vdash A$ for every sentence $A$ of first-order logic.
\end{theorem}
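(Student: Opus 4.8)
The plan is to prove this as a completeness theorem by constructing a universal fallible Beth model directly, along the lines of a Henkin-style saturation argument adapted to the binary tree and to the bar semantics. First I would fix a countable expansion of the language by fresh constants $c_0, c_1, \dots$ serving as Henkin witnesses, and identify the constant domain with $\omega = \set{c_n \mid n < \omega}$. Since $J$ is recursively enumerable, I would enumerate all sentences of the expanded language together with all ``tasks'' to be met---deciding disjunctions and providing existential witnesses---in a single fair enumeration.

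Second, I would define, by recursion on the nodes $p$ of the full binary tree $2^{<\omega}$, a finite set of sentences $\Gamma_p$, monotone in the tree order ($p \leq q$ implies $\Gamma_p \subseteq \Gamma_q$), with $\Gamma_{\seq{}}$ containing the axioms of $J$. The bookkeeping drives the construction so that along every branch each task is eventually treated: when a disjunction $A \vee B$ becomes derivable from $\Gamma_p$, the two immediate successors of some node above $p$ are committed respectively to $A$ and to $B$, so that $A \vee B$ is realised on a bar rather than at a single successor; an existential $\exists x\, C(x)$ is handled by adding a witness $C(c_n)$ for a fresh $c_n$. A node $p$ is placed into $U$ exactly when $\Gamma_p$ proves $\bot$, and interpretations are read off atomically, $\bar c \in I_p(R)$ iff $R(\bar c) \in \Gamma_p$, trivialised on $U$.

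Third, I would verify that $(2^{<\omega}, U, D, I)$ is a genuine fallible Beth model: the closure condition on $U$ (if every branch through $p$ meets $U$, then $p \in U$) follows from compactness, since if $\bot$ is eventually derived on every branch, then by König's lemma a single finite stage already derives it; the atomic bar condition (iii) is arranged analogously by the construction. The heart of the argument is the \emph{Truth Lemma}: for every sentence $A$ and node $p$, $p \Vdash A$ if and only if $\Gamma_p \vdash A$. This is proved by induction on $A$; the cases for $\vee$ and $\exists$ are where the bar semantics and the fairness of the bookkeeping do the real work, and matching the forcing clause for $\forall$ against the bar-based clauses for $\vee$ and $\exists$ is exactly what permits the \emph{constant} domain $\omega$ without validating the constant-domain principle---the feature that distinguishes Beth from Kripke semantics here. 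Granting the Truth Lemma, taking $p$ to be the root yields $\mathcal{B}_J \Vdash A$ iff $J \vdash A$.

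The main obstacle I anticipate is the coordination between the binary branching and the enumeration of tasks, so that every derivable disjunction and existential is realised on a bar through each node while keeping the domain constant; closing the $\vee$- and $\exists$-cases of the Truth Lemma, and confirming the fallible-frame closure condition via compactness, is the delicate part. In practice this completeness theorem is classical, and I would cite \citet[Chapter 13]{TroelstraVanDalen1988II} rather than reproduce the full construction.
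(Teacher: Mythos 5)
The paper does not prove this theorem at all: it is imported verbatim from \citet[Chapter 13, Remark 2.6 and Theorem 2.8]{TroelstraVanDalen1988II}, which is exactly the citation you fall back on in your final sentence. Your Henkin-style sketch (fair enumeration of disjunction/witness tasks on $2^{<\omega}$, inconsistent nodes collected into $U$, truth lemma with the $\vee$/$\exists$ cases discharged on bars) is the standard construction underlying that reference, and the one genuinely delicate point you flag—deriving the $U$-closure and bar conditions from a finite bar via König's lemma plus pushing derivability down through the $\vee$-elimination splits—is handled there; so your proposal is consistent with, and strictly more informative than, what the paper offers.
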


In what follows, we will refer to $\mathcal{B}_J$ as the \textit{universal Beth model for $J$}.

\subsection{Beth realisability models}

Inspired by \citet{vanOosten1991}, we now combine our notion of $\SRM^{+,O}_\LL$-realisability with Beth semantics. To make coherent use of oracles, we need the following definition.

\begin{definition}
    Let $P$ be a partial order. A \textit{system of oracles} $(O_v)_{v \in P}$ consists of partial class functions $O_v: \VV \to \VV$ such that, for all $w \geq v$, we have that $\dom(O_v) \subseteq \dom(O_w)$ and $O_v(x) = O_w(x)$ for all $x \in \dom(O_v)$.
\end{definition}

We need some notation to work with oracles. Given an $\SRM^{+,O}_\mathrm{L}$-program $r$, we write $r(x_1,\dots,x_n;O)$ for the result of the successful computation (if it exists) of $r$ on input $x_1,\dots,x_n$ and oracle $O$. If we work with a system of oracles $(O_v)_{v \in P}$, we also write $r(x_1,\dots,x_n;v)$ to mean $r(x_1,\dots,x_n;O_v)$. Finally, we write $r(x_1,\dots,x_n)$ to mean $r(x_1,\dots,x_n;\emptyset)$, i.e. the output (if it exists) of $r$ run with the empty oracle.

\begin{definition}
    Let $(P,U)$ be a fallible Beth frame, $(O_v)_{v \in P}$ be a system of oracles. We define recursively for sentences $\phi$ and $\psi$ in the language of set theory, for $a,b \in \mathrm{L}$, $v \in P$ and an $\SRM^{+,O}_\mathrm{L}$-program $r$:
    \begin{enumerate}
        \item $r \Vdash_v \bot$ if and only if $v \in U$;
        \item $r \Vdash_v a = b$ if and only if $a = b$ or $v \in U$;
        \item $r \Vdash_v a \in b$ if and only if $a \in b$ or $v \in U$;
        \item $r \Vdash_v \phi \wedge \psi$ if and only if $r(0;v) \Vdash_v \phi$ and $r(1;v) \Vdash_v \psi$;
        \item $r \Vdash_v \phi \vee \psi$ if and only if there is a $U$-bar $B$ for $v$ such that, for every $w \in B$, either $r(0;w) = 0$ and $r(1;w) \Vdash_w \phi$, or $r(0;w) = 1$ and $r(1;w) \Vdash \psi$;
        \item $r \Vdash_v \phi \rightarrow \psi$ if and only if for every $w \geq v$, if $s \Vdash_w \phi$, then $r(s;w) \Vdash_w \psi$;
        \item $r \Vdash_v \exists x \phi(x)$ if and only if there is a $U$-bar $B$ for $v$ such that for all $w \in B$, $r(1;w) \Vdash_w \phi(r(0;w))$;
        \item $r \Vdash_v \forall x \phi(x)$ if and only if for every $a$, $r(a;v) \Vdash_v \phi(a)$.
    \end{enumerate}
\end{definition}

If $v \in U$, then $r \Vdash_v \phi$ for every realiser $r$ and set-theoretic sentence $\phi$. The following is established by a standard argument.

\begin{theorem}
    Beth-realisability is sound for the axioms and rules of intuitionistic first-order logic.
\end{theorem}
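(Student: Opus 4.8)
The plan is to verify soundness by the usual two-step route: first establish that Beth-realisability is monotone along the tree order, and then check that each axiom and rule of a Hilbert-style formalisation of $\IQC$ admits a realiser, with \emph{modus ponens} preserving realisability. Monotonicity is the statement that if $r \Vdash_v \phi$ and $w \geq v$, then some (uniformly computable modification of) $r$ realises $\phi$ at $w$. This should follow by induction on $\phi$: the atomic cases use that $U$ is upwards closed and that $a = b$, $a \in b$ are absolute; the $\rightarrow$ and $\forall$ clauses quantify over all $w' \geq v$ (resp. all $w' \geq w$), so persistence is immediate since $\{w' : w' \geq w\} \subseteq \{w' : w' \geq v\}$; and for $\wedge$, $\vee$, $\exists$ one uses that the system of oracles satisfies $O_v(x) = O_w(x)$ on $\dom(O_v)$, so that a computation valid with oracle $O_v$ remains valid with $O_w$, together with the fact that a $U$-bar for $v$ restricts to a $U$-bar for $w$ (every path through $w$ is a path through $v$).

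Next I would produce realisers for the logical axioms. For a Hilbert system it suffices to realise the axiom schemes governing $\wedge, \vee, \rightarrow, \bot, \forall, \exists$. The key observation is that each clause of the definition packages the required data into the inputs and outputs of an $\SRM^{+,O}_\LL$-program, so the realisers are the evident combinators: for $\phi \rightarrow (\psi \rightarrow \phi)$ one builds $r$ with $r(s;w)$ the constant-$s$ program; for $(\phi \rightarrow \psi \rightarrow \chi) \rightarrow (\phi \rightarrow \psi) \rightarrow \phi \rightarrow \chi$ one composes applications, using \Cref{Lemma: basic SRM computable functions and predicates} to assemble and decode the relevant pairs; the $\bot$-axiom $\bot \rightarrow \phi$ is realised because if $s \Vdash_w \bot$ then $w \in U$, and then every realiser works at $w$ by the remark following the definition. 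The quantifier axioms, e.g. $\forall x\, \phi(x) \rightarrow \phi(t)$ and $\phi(t) \rightarrow \exists x\, \phi(x)$, are handled by reading off the witness from the $\forall$/$\exists$ clauses and exhibiting a trivial $U$-bar ($B = \{v\}$) for the existential case. For the inference rules, \emph{modus ponens} is immediate from the $\rightarrow$-clause, and the generalisation rules for $\forall$ (and the corresponding rule for $\exists$) follow by collecting the uniform realisers over all $a \in \LL$ into a single program, which is possible since the construction is itself $\SRM^{+,O}_\LL$-computable.

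I would expect the main obstacle to be the interaction between the $U$-bars appearing in the $\vee$ and $\exists$ clauses and the tree structure, specifically in verifying the disjunction- and existence-related axioms where realisers must be glued along a bar rather than produced at a single node. The delicate point is that a realiser at $v$ of, say, $\phi \vee \psi$ supplies, on a $U$-bar $B$ for $v$, a \emph{local} decision $r(0;w) \in \{0,1\}$ at each $w \in B$, and to build a realiser of a consequence one must search the tree to locate the bar and then recombine the per-node data coherently across different oracles $O_w$. Making this search effective relies on the fact that the underlying tree (the full binary tree of height $\omega$ from \Cref{Theorem: Canonical fallible Beth model}) and the $U$-bars are sufficiently definable, and that the system of oracles is coherent, so that an $\SRM^{+,O}_\LL$ can enumerate candidate bars and, using the fallibility condition on $(P,U)$, conclude the desired forcing at $v$. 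Once this bar-management is carried out uniformly, the remaining verifications are the routine combinator constructions indicated above, and I would state them as such rather than writing out each program in full.
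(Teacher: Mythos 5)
Your overall skeleton --- a monotonicity lemma followed by realisers for a Hilbert-style axiomatisation and the verification of \emph{modus ponens} --- is exactly the ``standard argument'' the paper appeals to (the paper gives no further details), and most of it is fine; note only that monotonicity holds with the \emph{same} realiser, since a successful computation with oracle $O_v$ queries the oracle only on $\dom(O_v)$, where every $O_w$ with $w \geq v$ agrees with it. The genuine problem is your third paragraph. The delicate step is disjunction (and existential) \emph{elimination}: from $u \Vdash_{w} \phi \vee \psi$ together with realisers of $\phi \rightarrow \chi$ and $\psi \rightarrow \chi$ one obtains a single program that, run with oracle $O_x$, outputs a realiser of $\chi$ at $x$ for every $x$ in some $U$-bar $B$ for $w$, and one must conclude $\Vdash_w \chi$. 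Your plan --- have the $\SRM^{+,O}_\LL$ ``search the tree to locate the bar'' and ``enumerate candidate bars'' --- cannot work and is not needed. It cannot work because the bar $B$ is merely asserted to exist in the $\vee$-clause and is not computable data; the realiser has no access to the tree or to its own position in it, only to the oracle; and the theorem concerns an arbitrary fallible Beth frame with an arbitrary system of oracles, so no definability of $P$, $U$, or the bars may be assumed (you are importing the binary tree of \Cref{Theorem: Canonical fallible Beth model}, which belongs to a later, specific application).

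What is actually needed is a \emph{pasting lemma}, proved by induction on $\chi$: if $B$ is a $U$-bar for $v$ and $q$ is a single program whose output with oracle $O_x$ realises $\chi$ at $x$ for every $x \in B \setminus U$, then a uniformly computable modification of $q$ realises $\chi$ at $v$. The atomic and $\bot$ cases use the fallibility condition to produce some $x \in B \setminus U$ whenever $v \notin U$; the $\vee$ and $\exists$ cases compose the bar $B$ with the bars supplied at its nodes; the $\rightarrow$ and $\forall$ cases restrict $B$ to a $U$-bar for the relevant $w \geq v$ and apply the induction hypothesis; and throughout one uses oracle coherence (the output of $q$ with oracle $O_y$ equals its output with $O_x$ for $y \geq x$ whenever the latter converges), so that the one program does the right thing everywhere above the bar without ever knowing where it is. With this lemma in hand, disjunction and existential elimination are realised by the evident case-split combinators, and the rest of your proposal goes through as written.
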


\begin{lemma}
    \label{Lemma: Sigma_1 Beth-realisability is truth in L}
    Let $\phi(\bar x)$ be a $\Sigma_1$-formula and $v \notin U$. Then there is some realiser $r \Vdash_v \phi(\bar x)$ if and only if $L \vDash \phi(\bar x)$.
\end{lemma}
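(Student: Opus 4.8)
The plan is to prove the biconditional by treating the two directions separately and strengthening the forward direction so that the recursion goes through. For the converse, (a realiser at $v$) $\Rightarrow$ truth, I would argue by induction on the build-up of $\Sigma_1$-formulas (atomic; closed under $\wedge$, $\vee$, the bounded quantifiers, and unbounded $\exists$). Only two facts are needed: first, for $v \notin U$ the atomic clauses reduce to genuine truth, and since $a,b \in \LL$, equality and membership agree with $\LL \vDash$ by absoluteness; second, whenever $v \notin U$ and $B$ is a $U$-bar for $v$, the set $B \setminus U$ is non-empty (as recorded after the Beth definitions). In the $\vee$- and $\exists$-cases I would pick a node $w \in B \setminus U$ and apply the induction hypothesis there; in the $\wedge$- and bounded-$\forall$-cases I read off the subrealisers at $v$ itself, instantiating the implication clause at $w = v$ with the trivial realiser of the true atom $c \in a$.

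For the forward direction I would prove the \emph{stronger} statement that if $\LL \vDash \phi(\bar a)$ then there is an $\SRM_\LL$-program, without oracle, realising $\phi(\bar a)$ simultaneously at \emph{every} node $w \in P$, produced uniformly from $\bar a$. This strengthening is forced by two clauses: the implication clause quantifies over all $w \geq v$, so a realiser of $c \in a \to \psi(c)$ must deliver a realiser of $\psi(c)$ valid at every $w \geq v$; and the bounded-universal case needs these realisers uniformly in the bound variable $c$. Validity at the nodes of $U$ is free, since there every formula is realised trivially.

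The main obstacle is manufacturing these uniform, all-node realisers for true $\Sigma_1$ instances: the existential clauses demand witnesses, and the naive approach would semi-decide the disjuncts and existentials and dovetail the searches — which is not available for transfinite machines. I would circumvent this using $\Sigma_1$-reflection in $\LL$: $\LL \vDash \phi(\bar a)$ if and only if $\LL_\gamma \vDash \phi(\bar a)$ for some ordinal $\gamma$. Since the constructible hierarchy $\gamma \mapsto \LL_\gamma$ is $\SRM_\LL$-computable (exactly as $\alpha \mapsto V_\alpha$ is $\SRM^+$-computable) and, once all quantifiers are relativised to the set $\LL_\gamma$, truth of $\phi(\bar a)$ in $\LL_\gamma$ becomes a bounded and hence $\SRM_\LL$-decidable question (by \Cref{Lemma: L Delta_0 truth decidable} together with \Cref{Lemma: basic SRM computable functions and predicates}), a single unbounded search over $\gamma$ locates a witnessing $\LL_\gamma$ and halts precisely when $\phi(\bar a)$ is true. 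This trades all dovetailing for one ordinal search.

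From a witnessing $\LL_\gamma$ I would then read off the realiser by recursion on $\phi$, drawing every existential witness and every disjunct-choice from inside $\LL_\gamma$ (all such data are bounded by $\LL_\gamma$ and computable there): at a $\Delta_0$-leaf use the uniform decision procedure of \Cref{Lemma: L Delta_0 truth decidable}; at $\wedge$ pair the two subrealisers; at $\vee$ and $\exists$ output the branch or witness found in $\LL_\gamma$ and take the singleton $U$-bar $\set{w}$ at each node $w$; at a bounded $\forall z \in a\,\psi$ use that $a \subseteq \LL_\gamma$, so for every $c \in a$ one has $\LL_\gamma \vDash \psi(c)$ and the same recursion, now parametrised by $c$ and the fixed $\gamma$, yields the required realisers uniformly in $c$. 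Keeping $\gamma$ fixed throughout the recursion is exactly what secures the uniformity needed in the bounded-universal case. Combining the strengthened forward direction with the converse induction then gives the stated biconditional at any $v \notin U$; I expect the reflection step and the bookkeeping of $U$-bars to be the only genuinely delicate points, with everything else routine.
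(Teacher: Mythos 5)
Your proposal is correct, and its backward direction ((realiser at $v$) $\Rightarrow$ truth) is essentially the paper's: the paper likewise reduces atoms to genuine truth for $v \notin U$, uses the non-emptiness of $B \setminus U$ to pick a node at which to apply the induction hypothesis in the $\vee$- and $\exists$-cases, and feeds a (trivial) realiser of a true atom into the implication clause to handle bounded quantifiers. The forward direction, however, is genuinely different. The paper makes no attempt at an effective or uniform construction: it simply hard-codes the needed data as set parameters chosen non-effectively via the metatheoretic well-ordering --- a witness $a$ together with a realiser $s \Vdash_v \phi(a)$ in the $\exists$-case, and a set-sized function $f : y \to \LL$ selecting a realiser $f(z) \Vdash_v \phi(z)$ for each $z \in y$ in the bounded-$\forall$ case. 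Your construction instead runs a $\Sigma_1$-reflection argument ($\LL \vDash \phi(\bar a)$ iff $\LL_\gamma \vDash \phi(\bar a)$ for some $\gamma$) followed by an unbounded ordinal search, reading all witnesses off a fixed $\LL_\gamma$, and thereby produces a single oracle-free program, uniform in $\bar a$ and valid at every node. This costs more: in particular, the computability of $\gamma \mapsto \LL_\gamma$ is not quite ``exactly as'' for $\alpha \mapsto V_\alpha$, since plain $\SRM$s lack the $\POW$-operation and you must compute definable power sets by enumerating formulas and parameters and evaluating satisfaction (doable via \Cref{Lemma: basic SRM computable functions and predicates} and a relativisation of \Cref{Lemma: L Delta_0 truth decidable}, or by appeal to the OTM-simulation theorem, but it deserves a sentence). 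What it buys is that it makes explicit the requirement --- which you correctly identify and which the paper passes over silently, e.g.\ when it declares that ``the $\SRM$ that returns $s$ on any input'' realises an implication at $v$ --- that the realisers handed to the implication and bounded-universal clauses must work at all $w \geq v$, not just at $v$; in the paper this is harmless only because its parameter-coded realisers are node-independent (equivalently, because realisability of these formulas is monotone along the tree), but that is never stated. Either route proves the lemma; yours is the more robust, and its uniformity is closer in spirit to what the rest of the paper actually needs from this lemma.
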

\begin{proof}
    As $v \notin U$, we know that any $U$-bar $B$ for $v$ satisfies $B \setminus U \neq \emptyset$. We prove this by induction. The cases for equality and set-membership are trivial.
    
    Suppose that $\Vdash_v \phi(\bar a) \wedge \psi(\bar a)$. By definition, this is equivalent to $\Vdash_v \phi(\bar a)$ and $\Vdash_v \psi(\bar a)$. Applying the induction hypothesis, this holds if and only if $\LL \vDash \phi(\bar a)$ and $\LL \vDash \psi(\bar a)$. This is, of course, equivalent to $\LL \vDash \phi(\bar a) \wedge \psi(\bar a)$.
    
    For disjunction, first suppose that $r \Vdash_v \phi(\bar a) \vee \psi(\bar a)$. By definition, this means that there is a $U$-bar $B$ for $v$ such that for all $w \in B$ we have either $r(0;w) = 0$ and $r(1;w) \Vdash_w \phi(\bar a)$, or $r(0;w) = 1$ and $r(1;w) \Vdash_w \psi(\bar a)$. Recall that $B \setminus U$ is non-empty. So take any $w \in B \setminus U$, then $\Vdash_w \phi(\bar a)$ or $\Vdash_w \psi(\bar a)$. By induction hypothesis, $\LL \vDash \phi(\bar a)$ or $\LL \vDash \psi(\bar a)$. Hence $\LL \vDash \phi(\bar a ) \vee \psi(\bar a)$.
    Conversely, assume that $\LL \vDash \phi(\bar a) \vee \psi(\bar a)$. Then $\LL \vDash \phi(\bar a)$ or $\LL \vDash \psi(\bar a)$. It follows, by induction hypothesis, that $\Vdash_v \phi(\bar a)$ or $\Vdash_v \psi(\bar a)$, but then $\Vdash_v \phi(\bar a) \vee \psi(\bar a)$.
    
    For implication, assume that $r \Vdash_v \phi \rightarrow \psi$. If $\LL \not \vDash \phi$, then trivially $\LL \vDash \phi \rightarrow \psi$. So assume that $\LL \vDash \phi$. By induction hypothesis, we know that there is a realiser $s \Vdash_v \phi$. Hence, $r(s) \Vdash_v \psi$. Applying the induction hypothesis once more, we get $\LL \vDash \psi$. Conversely, assume that $\LL \vDash \phi \rightarrow \psi$. If $\LL \not \vDash \phi$, then, by induction hypothesis, $\not \Vdash_w \phi$ for all $w \geq v$. So $\Vdash_v \phi \rightarrow \psi$ holds trivially. If $\LL \vDash \phi$, then $\LL \vDash \psi$. So, by induction hypothesis, there is a realiser $s \Vdash_v \psi$. Hence, a realiser for $\phi \rightarrow \psi$ is the $\SRM$ $p$ that returns $s$ on any input.
    
    For bounded universal quantification, assume that $\LL \vDash \forall x \in y \phi(x)$. Then, by induction hypothesis, we can find a function $f: y \to \LL$ such that $f(z) \Vdash_v \phi(z)$. Let $p$ be the $\SRM$ with parameter $f$ that returns $f(z)$ on input $z$. Then $p \Vdash_v \forall x \in y \phi(x)$. Conversely, note that $\Vdash_v \forall x \in y \phi(x)$ entails that $\Vdash_v \phi(x)$ for every $x \in y$. An application of the induction hypothesis yields $\LL \vDash \forall x \in y \phi(x)$.
    
    For unbounded existential quantification, assume that $\LL \vDash \exists x \phi(x)$. Then there is some $a \in \LL$ such that $\LL \vDash \phi(a)$. By induction hypothesis, there is a realiser $s \Vdash_v \phi(a)$. Let $p$ be an $\SRM$ such that $p(1) = s$ and $p(0) = a$ (by using, if necessary, parameter $a$). Then $p \Vdash_v \exists x \phi(x)$. Conversely, if $p \Vdash_v \exists x \phi(x)$, then there is a $U$-bar $B$ for $v$ such that for all $w \in B$, $p(1;w) \Vdash_w \phi(p(0;w))$. Take any $w \in B$ and the induction hypothesis implies that $\LL \vDash \phi(p(0;w))$, and, hence, $\LL \vDash \exists x \phi(x)$.
\end{proof}

\begin{theorem}
    The Beth realisability model satisfies $\CZF + \mathrm{PowerSet} + \mathrm{AC}$.
\end{theorem}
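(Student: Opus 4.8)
The plan is to adapt the proof of \Cref{Theorem: CZF realisable}, which establishes plain $\SRM^+$-realisability of $\CZF + \mathrm{PowerSet} + \AC$, to the present Beth setting. The realisers will be essentially the same $\SRM^+_\LL$-programs as in that proof: they are the honest programs that never call the oracle on their own behalf and invoke it only indirectly, by simulating their input realisers, so that they function uniformly at every node $v \in P$. The verification now differs in only two respects: at nodes $v \in U$ everything is realised trivially and may be ignored, and the clauses for disjunction and existential quantification route witnesses through $U$-bars. The key tool throughout is \Cref{Lemma: Sigma_1 Beth-realisability is truth in L}, which tells us that for $\Sigma_1$-matrices and $v \notin U$ the relation $\Vdash_v$ coincides with truth in $\LL$; since $\LL$ is a model of $\ZFC$ and hence of $\CZF + \mathrm{PowerSet} + \AC$, this reduces most of the work to computing the right witnesses.

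First I would dispatch the axioms whose defining matrix is $\Delta_0$ or $\Sigma_1$: extensionality, empty set, pairing, union, infinity, power set, and choice. For each, the realiser computes the relevant witness with the programs of \Cref{Lemma: basic SRM computable functions and predicates} -- for instance $\bigcup a$ for union, $\omega$ for infinity, the $\POW$-operation for power set, and a $\TAKE$-based selection for $\AC$ -- and the membership or biconditional condition in the matrix is $\Delta_0$, so \Cref{Lemma: Sigma_1 Beth-realisability is truth in L} shows it is realised at $v \notin U$ exactly when it holds in $\LL$. For $\Delta_0$-separation the realiser additionally runs the decision procedure of \Cref{Lemma: L Delta_0 truth decidable}: given $x$ it performs a bounded search through $x$, collecting those $z \in x$ with $p(\godel{\phi},z) = 1$, and the resulting set witnesses the instance because \Cref{Lemma: L Delta_0 truth decidable,Lemma: Sigma_1 Beth-realisability is truth in L} together identify realisability of $\phi$ with its truth in $\LL$. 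Subset collection then comes for free from power set by \Cref{Fact: PowerSet implies SubsetCollection}.

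The two remaining schemes, $\in$-induction and strong collection, require genuine bookkeeping. For $\in$-induction, given $s \Vdash_v \forall x (\forall y \in x\, \phi(y) \rightarrow \phi(x))$, I would build realisers of $\phi(a)$ by recursion on $\rank(a)$, feeding $s$ the realisers already constructed for the members of $a$; since the Beth clause for $\forall$ is pointwise (for every $a$, $r(a;v) \Vdash_v \phi(a)$), this transfinite recursion proceeds exactly as in the plain case. Strong collection is the crux. Given $s \Vdash_{w'} \forall y \in x\, \exists z\, \phi(y,z)$ at some $w' \geq v$, I must produce, along a $U$-bar for $w'$, a collection set together with realisers of the two conjuncts. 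The point that makes this work is that the existential clause evaluates a realiser at each bar node $w''$ with \emph{its own} oracle $O_{w''}$; since $O_{w'} \subseteq O_{w''}$, simulating $s$ at $w''$ is consistent with the oracle available there. The collection set is then assembled by running $s$ over the (set-many) nodes of the tree $P$ above $w'$ and over the elements $y \in x$, gathering every witness $z$ that appears; as $x$ is a set and $P$ has only set-many nodes, this is a genuine element of $\LL$ computable by an $\SRM^+_\LL$.

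The main obstacle is precisely this last step: one must check that the witnesses scattered across a bar can be amalgamated into a single collection set while simultaneously realising both the forward condition $\forall y \in x\, \exists z \in w\, \phi(y,z)$ and the backward condition $\forall z \in w\, \exists y \in x\, \phi(y,z)$ on a common $U$-bar, and that every oracle query remains consistent with the node at which each sub-realiser is evaluated. Once the bar-indexed evaluation of realisers is set up carefully, these conditions follow by the same manipulations as in \Cref{Theorem: CZF realisable}, now mediated at each node by \Cref{Lemma: Sigma_1 Beth-realisability is truth in L}.
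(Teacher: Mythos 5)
Your proposal takes essentially the same route as the paper, whose entire proof is: construct the realisers (almost exactly) as in \Cref{Theorem: CZF realisable}, replace the use of \Cref{Lemma: Sigma_1 realisability is truth} by \Cref{Lemma: Sigma_1 Beth-realisability is truth in L} (notably for $\Delta_0$-separation), and note that nodes $v \in U$ are trivial. You supply considerably more detail on strong collection and the $U$-bar bookkeeping than the paper does, but the strategy and the key lemmas invoked are identical.
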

\begin{proof}
    Realisers for the axioms and schemas can be constructed (almost exactly) as in the proof of \Cref{Theorem: CZF realisable}. For the case of $\Delta_0$-separation, observe that the use of \Cref{Lemma: Sigma_1 realisability is truth} has to be replaced with \Cref{Lemma: Sigma_1 Beth-realisability is truth in L}. (Note that we only need to consider the cases for $v \notin U$, as the other case is trivial.)
\end{proof}

\subsection{Constructing a model for a given logic}

The goal of this section is to construct a Beth-realisability model that matches the truth in the universal Beth model $\mathcal{B}_J = (P,U, D,I)$ for a given logic $J$. To begin with, we define the two systems of oracles $(F_v)_{v \in P}$ and $(G_v)_{v \in P}$. If $a$ is a set, let $\rank_{\omega}(a)$ be the unique natural number such that $\rank(a) = \alpha + \rank_{\omega}(a)$ for a maximal (possibly $0$) limit ordinal $\alpha$.
\begin{enumerate}
    \item We define $F_v: \mathrm{V} \times \mathrm{V} \to \mathrm{V}$ by recursion on $v \in P$ ($P$ being the binary tree of height $\omega$) such that:
    $$
        F_v(m,\seq{b_0,\dots,b_n}) = \begin{cases}
            a, 
                & \text{if } m = \godel{\exists x A(x,y_0,\dots,y_n)} \\ &\phantom{if} \text{and } w \leq v \text{ is least such that }
                 a \in \omega \text{ is least with } \\ &\phantom{if} \mathcal{B}_J, w \Vdash A(a,\rank_\omega(b_0),\dots,\rank_\omega(b_n)), \\
            i, 
                & \text{if } m = \godel{(A_0 \vee A_1)(y_0,\dots,y_n)} \\ &\phantom{if} \text{and }
                 w \leq v \text{ is least such that } i \in \omega \text{ is least with } 
                \\ &\phantom{if} \mathcal{B}_J, w \Vdash A_i(\rank_\omega(b_0),\dots,\rank_\omega(b_n)), \\
            \text{undefined}, 
                & \text{otherwise.}
        \end{cases}
    $$
    \item We define $G_v: \mathrm{V} \times \mathrm{V} \to \mathrm{V}$ such that:
    $$
    G_v(a,b) = \begin{cases}
        1, 
            & \text{if } b = \seq{i,b_0,\dots,b_n}, 
                \\& \phantom{if} \ \mathcal{B}_J, v \Vdash P_i(\rank_\omega(b_0),\dots,\rank_\omega(b_n)),
                \\& \phantom{if} \text{ and } D(a,b) = 1, \\
        0, 
            & \text{if } b = \seq{i,b_0,\dots,b_n},
                \\& \phantom{if} \ \mathcal{B}_J, v \Vdash P_i(\rank_\omega(b_0),\dots,\rank_\omega(b_n)),
                \\& \phantom{if} \text{ and } D(a,b) = 0, \\
        \text{undefined,} 
            & \text{otherwise.}
    \end{cases}
    $$
\end{enumerate}

\begin{lemma}
    The sequences $(F_v)_{v \in P}$ and $(G_v)_{v \in P}$ form systems of oracles. \qed
\end{lemma}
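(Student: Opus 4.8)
The plan is to verify, for each of the two sequences, the two defining clauses of a system of oracles: for every $v \le w$ in $P$ one must check that $\dom(O_v) \subseteq \dom(O_w)$ and that $O_v(x) = O_w(x)$ for all $x \in \dom(O_v)$. I would treat $(G_v)_{v \in P}$ first, since it is the more transparent case, and then $(F_v)_{v \in P}$.

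For $(G_v)$, fix $v \le w$ and suppose $G_v(a,b)$ is defined. By definition $b = \seq{i,b_0,\dots,b_n}$, we have $\mathcal{B}_J, v \Vdash P_i(\rank_\omega(b_0),\dots,\rank_\omega(b_n))$, and the output equals $D(a,b) \in \set{0,1}$. Since $\mathcal{B}_J$ is a fallible Beth model, its interpretation is monotone, i.e. $I_v(P_i) \subseteq I_w(P_i)$ by clause (i) of the definition of a fallible Beth model, so the forcing condition persists from $v$ to $w$ and hence $G_w(a,b)$ is defined as well; this yields $\dom(G_v) \subseteq \dom(G_w)$. As the predicate $D$ does not depend on the node, $G_w(a,b) = D(a,b) = G_v(a,b)$, giving agreement. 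Here I use that $D$ is a totally defined predicate on $\VV \times \VV$, so that the definedness of $G_v(a,b)$ is governed entirely by the form of $b$ together with the forcing condition, and not by $D$ itself.

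For $(F_v)$ the key point is structural rather than about persistence of forcing. Fix $v \le w$ and suppose $F_v(m,\seq{b_0,\dots,b_n})$ is defined; say $m = \godel{\exists x A(x,y_0,\dots,y_n)}$, the disjunction case being identical with $a$ replaced by $i$. Then there is a least node $u \le v$ admitting a witness, and the output is the least $a \in \omega$ with $\mathcal{B}_J, u \Vdash A(a,\rank_\omega(b_0),\dots,\rank_\omega(b_n))$. Because $P$ is a tree, the chain $\Set{u}{u \le v}$ of predecessors of $v$ is a linearly ordered initial segment of the chain $\Set{u}{u \le w}$ of predecessors of $w$: every predecessor of $w$ that is not a predecessor of $v$ lies strictly above $v$, and hence strictly above the minimal witnessing node $u$. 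Consequently the least witnessing predecessor of $w$ coincides with that of $v$, and so does the associated least witness; in particular $F_w(m,\seq{b_0,\dots,b_n})$ is defined and equal to $F_v(m,\seq{b_0,\dots,b_n})$. This establishes both domain inclusion and agreement.

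The only point requiring genuine care is this stability of the minimal witnessing predecessor in the definition of $F_v$: one must rule out that passing from $v$ to a descendant $w$ introduces a new, root-closer witnessing node that would change the selected value. This is precisely what the initial-segment property of predecessor chains in a tree prevents, so no real obstacle arises. In summary, the coherence of $(F_v)$ is a purely order-theoretic consequence of the tree structure, while the coherence of $(G_v)$ uses only the monotonicity of the interpretation built into the definition of a fallible Beth model together with the node-independence and totality of $D$.
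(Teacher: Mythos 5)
Your verification is correct and is precisely the argument the paper leaves implicit (the lemma is stated with no proof): for $(G_v)$ everything reduces to the monotonicity clause $I_v(P_i) \subseteq I_w(P_i)$ of the fallible Beth model together with the node-independence and totality of $D$, and for $(F_v)$ to the fact that in a tree the predecessors of $v$ form an initial segment of the predecessors of any $w \geq v$, so the least witnessing node (and hence the selected least witness) cannot change when passing to a descendant. You correctly isolate the latter as the only point needing care, and your argument for it is sound.
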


From now on, we consider the Beth-realisability based on these systems of oracles. Note that, without loss of generality, we can combine two systems of oracles into one by, e.g., taking $O_v(\seq{0,x}) = F_v(x)$ and $O_v(\seq{1,x}) = G_v(x)$ for all $v \in P$.

\begin{lemma}
    \label{Lemma: negative formula for D}
    Let $v \notin U$. There is a negative formula $\psi(x,y)$ such that there is a realiser $r \Vdash_v \psi(x,y)$ is realised if and only if $D(x,y) = 1$.
\end{lemma}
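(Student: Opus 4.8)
The plan is to sidestep the oracle entirely and exploit that, since $D$ is expressible in the language of set theory (\Cref{Theorem: D for SRM^+_L}) and $\LL$ is a classical model of $\ZFC$, we may work with a \emph{negative} formula that is merely classically equivalent to ``$D(x,y)=1$''. Concretely, I would fix a set-theoretic formula $\delta(x,y)$ with $\LL \vDash \delta(x,y)$ if and only if $D(x,y)=1$, and set $\psi(x,y) := \delta(x,y)^{N}$, its G\"odel--Gentzen negative translation; thus $\psi$ is negative, i.e.\ built from (negated) atomic formulas using only $\wedge$, $\to$, and $\forall$. Since classical logic proves $\delta \leftrightarrow \delta^{N}$, we get $\LL \vDash \psi(x,y)$ iff $\LL \vDash \delta(x,y)$ iff $D(x,y)=1$. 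It therefore suffices to prove the sublemma that, for $v \notin U$, a negative formula is Beth-realised at $v$ exactly when it is true in $\LL$.

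I would establish this sublemma by induction on negative formulas, in direct analogy with \Cref{Lemma: Sigma_1 Beth-realisability is truth in L}; the difference is that the induction now passes through unbounded $\forall$ and arbitrary $\to$ rather than through $\exists$ and bounded $\forall$. The base cases are immediate: for $v \notin U$ the clauses for $=$ and $\in$ reduce to genuine truth, and $\bot$ is never realised. Conjunction is handled by pairing and projecting realisers. For $\chi_0 \to \chi_1$, the forward direction uses that, assuming $\LL \vDash \chi_0$, the induction hypothesis supplies some $s \Vdash_v \chi_0$, whence $r(s;v) \Vdash_v \chi_1$ and the hypothesis again gives $\LL \vDash \chi_1$. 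The converse direction, and the universal case, require producing a \emph{single} machine that works uniformly, and here I would strengthen the induction hypothesis to record that true negative formulas carry no computational content: whenever such a formula is true in $\LL$ it is realised by a fixed, trivial realiser, independent of the node $w \geq v$ with $w \notin U$ and of the quantified parameter. The uniform realiser for $\forall x\, \chi_0(x)$, or for the consequent of an implication, can then simply dispatch this trivial realiser on every input. Observe that, because negative formulas contain neither $\vee$ nor $\exists$, no $U$-bars enter the argument and the system of oracles $(O_v)_{v \in P}$ is never queried; this is exactly why negativity is the correct hypothesis, and it also means the equivalence is insensitive to the choice of oracle system.

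Granting the sublemma, the proof concludes at once: for $v \notin U$ there is a realiser $r \Vdash_v \psi(x,y)$ if and only if $\LL \vDash \psi(x,y)$, which by the choice of $\psi$ holds if and only if $D(x,y)=1$. The main obstacle is the uniformity in the converse directions of the $\forall$- and $\to$-clauses: one must check that the canonical trivial realiser genuinely realises every true negative subformula \emph{simultaneously} at all relevant nodes and for all parameter values, rather than only instance-by-instance. This is the sole delicate point, and it is precisely the content of the strengthened induction hypothesis that true negative formulas admit fixed, parameter-free realisers.
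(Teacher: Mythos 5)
Your proof is correct, but it takes a genuinely different route from the paper's. The paper works directly with the computation predicate: it writes out ``$c$ is a successful computation of $D(x,y)$ with result $0$'' as a formula $\chi(c,x,y)$, checks clause by clause (via \Cref{Lemma: Sigma_1 Beth-realisability is truth in L}, with a separate argument for the power-set and oracle clauses, which are not $\Sigma_1$) that $\chi$ is realised iff true in $\LL$, and then sets $\psi := \neg\exists c\,\chi(c,x,y)$, using the totality of $D$ to convert ``no computation with output $0$'' into ``$D(x,y)=1$''. You instead take an arbitrary $\LL$-definition $\delta$ of $D$ (which \Cref{Theorem: D for SRM^+_L} supplies --- note you do need definability \emph{over $\LL$}, not merely over $\VV$, but this is exactly what the paper itself uses when it evaluates $\chi$ in $\LL$), pass to its G\"odel--Gentzen translation, and prove a general absoluteness sublemma: negative formulas are Beth-realised at $v \notin U$ iff true in $\LL$, witnessed by fixed, parameter- and node-independent realisers. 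That sublemma is not stated anywhere in the paper but is correct; the uniformity issue you isolate in the $\forall$- and $\to$-clauses is the genuine crux, and your strengthened induction hypothesis (true negative formulas have a single trivial realiser valid at every node and for every parameter) resolves it, since no $U$-bars or oracle queries ever arise in the negative fragment. What each approach buys: the paper's argument reuses an already-proved lemma and keeps $\psi$ syntactically close to the computation predicate, at the cost of an ad hoc case analysis; yours is more modular and entirely insensitive to the logical complexity of the definition of $D$, at the cost of proving one extra (standard) metatheorem. Both constructions deliver what \Cref{Lemma: Translation Base Case} actually needs downstream, namely that $\psi$ and $\neg\psi$ are decided by truth and carry trivial realisers.
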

\begin{proof}
    Except for the power set case, every clause of the definition of successful computation (\Cref{Definition: SRM computation}), adapted for $\SRM^+_\LL$, can be written as a $\Sigma_1$-formula. For the $\TAKE$-operation, recall that $<_\LL$ is $\Sigma_1$-definable. Now consider the predicate ``$x = \Pow(y)$'' which is needed for the $\POW$-operation and can be formalised as ``$\forall z (z \in x \leftrightarrow \forall w \in z \, w \in y)$''. As the part in brackets is a $\Delta_0$-formula, it follows with \Cref{Lemma: Sigma_1 Beth-realisability is truth in L} that this predicate is realised if and only if it is true. Note, in particular, that also the successor case for the halting problem oracle is realised if and only if it is true in $\LL$. This is because the existence of a successful computation is absolute, as we have just seen. 

    Applying \Cref{Lemma: Sigma_1 Beth-realisability is truth in L} once more, these observations show that we can construct a formula $\chi$ expressing ``$c$ is a successful computation of $D(x,y)$ with result $0$'' such that $\chi(c,x,y)$ is realised if and only if it is true in $\LL$. Take $\psi(x,y)$ to be $\neg \exists c \chi(c,x,y)$. It follows that $\psi(x,y)$ is realised if and only if $D(x,y) = 1$ because $D$ halts on every input with either $0$ or $1$ as output.
\end{proof}

\begin{lemma}
    \label{Lemma: Translation Base Case}
    Let $P_i(y_0,\dots,y_n)$ be a predicate in the language of first-order logic. There is a set-theoretic formula $\phi_i(y_0,\dots,y_n)$ and a realiser $r$ such that for all $b_0,\dots,b_n \in \mathrm{L}$, $r(b_0,\dots,b_n) \Vdash_v \phi_i(b_0,\dots,b_n)$ if and only if $\mathcal{B}_J, v \Vdash P_i(\rank_{\omega}(b_0),\dots,\rank_{\omega}(b_n))$.
\end{lemma}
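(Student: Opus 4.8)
The plan is to produce the set-theoretic formula $\phi_i$ as a $G$-oracle query wrapped inside the negative formula from \Cref{Lemma: negative formula for D}. The idea is that the oracle $G_v$ has been designed precisely so that $G_v(a,b)$ is defined exactly when $\mathcal{B}_J, v \Vdash P_i(\rank_\omega(b_0),\dots,\rank_\omega(b_n))$ (where $b = \seq{i,b_0,\dots,b_n}$), and in that case its value records $D(a,b)$. So the first step is to encode the arguments: given $b_0,\dots,b_n \in \LL$, the realiser computes the code $b := \seq{i,b_0,\dots,b_n}$ using the pairing and sequence functions provided by \Cref{Lemma: basic SRM computable functions and predicates}. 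The constant $i$ and the arity $n$ are fixed by the predicate $P_i$, so they can be hard-wired into the program (or supplied as parameters).

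Next I would exploit \Cref{Lemma: negative formula for D}. That lemma gives a negative formula $\psi(x,y)$ with $r \Vdash_v \psi(x,y)$ iff $D(x,y) = 1$. The key observation is that the realiser witnessing $\psi$ must, during its computation, query the $G$-oracle on pairs $(x,y)$; and by the definition of oracle evaluation in the Beth-realisability setting, such a query is \emph{defined} precisely when $\mathcal{B}_J, v \Vdash P_i(\rank_\omega(b_0),\dots,\rank_\omega(b_n))$. Thus I would take $\phi_i(y_0,\dots,y_n)$ to be essentially $\psi(c, \seq{i,y_0,\dots,y_n})$ for a suitable witness term $c$ (or an existentially-quantified version thereof), so that a successful computation of the realiser forces the oracle to be queried on the relevant code. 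When the forcing condition $\mathcal{B}_J, v \Vdash P_i(\dots)$ holds, the oracle is defined, the computation of $\psi$ can complete, and the realiser succeeds; when it fails, the oracle query loops (is undefined), so by the evaluation convention the whole computation loops and no realiser exists at $v$. This gives the desired biconditional between realisability of $\phi_i$ at $v$ and $\mathcal{B}_J, v \Vdash P_i(\rank_\omega(\bar b))$.

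The main obstacle I anticipate is handling the interaction between the two oracles $F$ and $G$ and making the argument uniform in $v$ while respecting the system-of-oracles coherence. Concretely, I must ensure that the realiser $r$ is a \emph{single} program whose behaviour across all $v \in P$ tracks $\mathcal{B}_J, v \Vdash P_i$ correctly, using the fact that $(G_v)_{v \in P}$ is a genuine system of oracles (monotone in $v$) so that definedness propagates upward consistently with the Beth forcing of $P_i$, which is itself monotone. A second delicate point is that $\psi$ from \Cref{Lemma: negative formula for D} is a \emph{negative} formula $\neg \exists c\, \chi(c,x,y)$; I must check that querying the $G$-oracle genuinely occurs along the relevant path of the computation of $\chi$ (rather than being bypassed), so that the undefinedness of the oracle at a failing $v$ actually blocks realisation rather than being swallowed by the negation. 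I would address this by arranging that the oracle call on the code $\seq{i,\bar b}$ is the very first instruction the realiser executes, guaranteeing that definedness of the query is a necessary precondition for any successful computation. Given that setup, the equivalence follows by combining the oracle's design with \Cref{Lemma: negative formula for D} and \Cref{Lemma: Sigma_1 Beth-realisability is truth in L}.
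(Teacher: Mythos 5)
There is a genuine gap, and it lies at the heart of your construction. You assume that ``the realiser witnessing $\psi$ must, during its computation, query the $G$-oracle on pairs $(x,y)$.'' This is not so: the formula $\psi(x,y)$ of \Cref{Lemma: negative formula for D} is $\neg\exists c\,\chi(c,x,y)$, where $\chi$ talks about computations of the program $D$ (a program defined using a halting-problem oracle, not $G$), and by that lemma $\psi(a,b)$ is realised at any $v \notin U$ \emph{if and only if} $D(a,b)=1$ --- a condition entirely independent of $v$ and of whether $G_v$ is defined at $(a,b)$. Consequently, taking $\phi_i$ to be $\psi(c,\seq{i,y_0,\dots,y_n})$ (or an existentially quantified version) cannot work: whenever $D(c,\seq{i,\bar b})=1$, the formula is realised at every node, including nodes where $P_i$ is not forced, and you also give no way to choose the witness $c$ so that $D(c,\seq{i,\bar b})=1$ in the first place. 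Hard-wiring a $G$-query as the first instruction of \emph{your} program $r$ does not repair this, because the direction actually needed downstream (in \Cref{Lemma: Translation}(i)) is that \emph{no} realiser of $\phi_i(\bar b)$ exists at $v$ when $\mathcal{B}_J, v \not\Vdash P_i$; the paper's argument for this direction quantifies over arbitrary realisers, not over a single program whose control flow you get to design.

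The missing idea is the Kleene--Post-style independence result. The paper takes
$\phi_i(\bar y) := \forall x\,\bigl(\psi(x,\seq{i,\bar y}) \vee \neg\psi(x,\seq{i,\bar y})\bigr)$,
i.e.\ the assertion that $D_z$ is \emph{decidable} for $z = \seq{i,\bar b}$. If some realiser realised this at a node $v$ where $P_i$ is not forced, then $G_v$ is undefined on all pairs $(\cdot,z)$, so the realiser can never successfully query $G$ there; it would therefore yield an $\SRM^{+}_\LL$-decision procedure for $D_z$ using only $D^z$ (the rest of $G_v$, together with $F_v$, being absorbable into a set-sized parameter), contradicting \Cref{Theorem: D for SRM^+_L}. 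Conversely, when $\mathcal{B}_J, v \Vdash P_i(\rank_\omega(b_0),\dots,\rank_\omega(b_n))$, the oracle $G_v(\cdot,z)$ is total and hands the realiser the values of $D(\cdot,z)$, and since $\psi$ is negative the corresponding realisers of $\psi$ or $\neg\psi$ are trivial to produce. Your proposal correctly identifies that the definedness of $G_v$ is the mechanism linking realisability to the Beth forcing of $P_i$, but without routing that mechanism through the undecidability of $D_z$ relative to $D^z$ the biconditional does not hold.
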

\begin{proof}
    Let $\psi(x,y)$ be the negative formula from \Cref{Lemma: negative formula for D} expressing that $D(x,y) = 1$. As $\psi$ is negative, we know that, for every $v$ and $a, b \in \mathrm{L}$, either $\Vdash_v \psi(a,b)$ or $\Vdash_v \neg \psi(a,b)$. Then take:
    $$
        \phi_i(y_0, \dots, y_n) \quad := \quad \forall x (\psi(x,\seq{i,y_0,\dots,y_n}) \vee \neg \psi(x,\seq{i,y_0,\dots,y_n})).
    $$
    Suppose there was a realiser $r \Vdash_v \phi_i(b_0,\dots,b_n)$ but $\mathcal{B}_J, v \not \Vdash P_i(\rank_\omega(b_0),\dots,\rank_\omega(b_n))$. In this situation, we can decide $D_{\seq{i,b_0,\dots,b_n}}$ from $r$ for every $a$: if $r(a,b_0,\dots,b_n)$ returns a realiser for $\psi(a,\seq{i,b_0,\dots,b_n})$, then $D_{\seq{i,b_0,\dots,b_n}}(a) = 1$; if $r(a,b_0,\dots,b_n)$ returns a realiser for $\neg \psi(a,\seq{i,b_0,\dots,b_n})$, then $D_{\seq{i,b_0,\dots,b_n}}(a) = 0$. However, by our assumption, $G_v(c,\seq{i,b_0,\dots,b_n})$ is undefined for all $c \in \mathrm{L}$. This means that $r$ cannot query the oracle $G_v$ on elements of the form $(c,\seq{i,b_0,\dots,b_n})$ because then the computation would not be successful. Hence, using $r$, we can construct a witnesses that $D_{\seq{i,b_0,\dots,b_n}}$ is computable in $D^{\seq{i,b_0,\dots,b_n}}$ but that is a contradiction to \Cref{Theorem: D for SRM^+_L}. (Note that $F$ does not matter here because the information contained in $F$ could be saved in a set-sized parameter.)
    
    Conversely, assume that $\mathcal{B}_J, v \Vdash P_i(\rank_\omega(b_0),\dots,\rank_\omega(b_n))$. By definition of $G$, it follows that $G_v(a, \seq{i,b_0,\dots,b_n})$ is defined for all $a \in \mathrm{L}$. Hence, a realiser for $\phi_i$ can be easily obtained by querying the oracle $G(a, \seq{i,b_0,\dots,b_n})$: if the result is $1$, then return a realiser of $\psi(a, \seq{i,b_0,\dots,b_n})$. If the result is $0$, then return a realiser of $\neg \psi(a,\seq{i,b_0,\dots,b_n})$. In both cases, the computation of the corresponding realiser is trivial because the formulas are negative. 
\end{proof}

Let $\tau(P_i) = \phi_i$ and extend $\tau$ to a translation of all formulas in the language of first-order logic in the obvious way. Note that the formulas $\phi_i$ are $\Pi_3$-formulas.

\begin{lemma}
    \label{Lemma: Translation}
    Let $A(y_0,\dots,y_n)$ be a formula in the language of first-order logic. Then:
    \begin{enumerate}
        \item If there is a realiser $r \Vdash_v A^\tau(b_0,\dots,b_n)$, then $\mathcal{B}_J,v \Vdash A(\rank_\omega(b_0),\dots,\rank_\omega(b_n))$.
        \item There is a realiser $r_A$ such that for all $b_0,\dots,b_n \in \mathrm{L}$, if $\mathcal{B}_J, v \Vdash A(\rank_\omega(b_0),\dots,\rank_\omega(b_n))$, then $r_A(b_0,\dots,b_n) \Vdash_v A^\tau(b_0,\dots,b_n)$.
    \end{enumerate}
\end{lemma}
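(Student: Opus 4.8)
The plan is to prove (1) and (2) by a \emph{simultaneous} induction on the structure of the first-order formula $A$, the atomic case being exactly \Cref{Lemma: Translation Base Case}. The two halves must be proved together because implication links them with a polarity switch: realising $(B\to C)^\tau$ invokes direction (2) for the consequent $C$ and direction (1) for the antecedent $B$, and dually for direction (1) of the implication. Before the induction I would fix three standing facts. First, I read the lemma as asserting, for each $A$, a \emph{single} realiser $r_A$ that works uniformly at every node, the node-dependence being absorbed into the oracle system. Second, I record the persistence of $\mathcal{B}_J$-forcing ($\mathcal{B}_J,v\Vdash E$ and $w\geq v$ imply $\mathcal{B}_J,w\Vdash E$), that $\rank_\omega\colon\mathrm{L}\to\omega$ is surjective, and that $\rank_\omega(n)=n$ for a finite ordinal $n$, so $\omega$-indexed data from $\mathcal{B}_J$ transports freely to and from $\mathrm{L}$. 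Third, since every node of $U$ both forces and realises everything, it suffices to treat $v\notin U$; then any $U$-bar $B$ for $v$ has $B\setminus U\neq\emptyset$ and $B\cup U$ is an ordinary Beth bar, which lets me pass between the two bar notions.

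For the connectives the bookkeeping is routine. Conjunction reads the components off $r(0;v),r(1;v)$ and applies the induction hypothesis at $v$ in both directions; in (2) one combines $r_B(\bar b)$ and $r_C(\bar b)$ on inputs $0,1$. For disjunction, direction (1) extracts the $U$-bar from the realisability clause, applies (1) at each barring node, and observes that one disjunct is forced along the Beth bar $B\cup U$ (trivially on $U$), giving $\mathcal{B}_J,v\Vdash(B\vee C)(\rank_\omega\bar b)$; direction (2) is where the oracle $F$ is essential: from $\mathcal{B}_J,v\Vdash(B\vee C)(\rank_\omega\bar b)$ take the witnessing Beth bar and let the realiser at each node $w$ query $F_w$ on $\godel{(B\vee C)(\bar y)}$ and $\seq{\bar b}$ to obtain the forced disjunct index $i$, returning $i$ together with $r_{A_i}(\bar b)$. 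For implication, direction (1) takes $w\geq v$ with $\mathcal{B}_J,w\Vdash B(\rank_\omega\bar b)$, manufactures a realiser of $B^\tau$ at $w$ via (2), feeds it to $r$, and concludes with (1) for $C$; direction (2) returns the realiser that discards its input $s$ and always outputs $r_C(\bar b)$, which is correct since any $s\Vdash_w B^\tau$ forces $B$ at $w$ by (1), whence $\mathcal{B}_J,w\Vdash C(\rank_\omega\bar b)$ by the Beth implication clause, whence $r_C(\bar b)\Vdash_w C^\tau(\bar b)$ by (2).

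The quantifier cases mirror disjunction. For the universal quantifier, direction (1) applies (1) at $v$ to every $a\in\mathrm{L}$, covers all of the constant domain $D_v=\omega$ by surjectivity of $\rank_\omega$, and lifts to every $w\geq v$ by persistence of $\mathcal{B}_J$-forcing; direction (2) simply sets $r_{\forall z B}(\bar b)(a)=r_B(a,\bar b)$, valid because $\mathcal{B}_J,v\Vdash\forall z\,B(z,\rank_\omega\bar b)$ yields $\mathcal{B}_J,v\Vdash B(\rank_\omega(a),\rank_\omega\bar b)$ for each $a$. For the existential quantifier, direction (1) reads the witness $a_w=r(0;w)\in\mathrm{L}$ off the $U$-bar and notes that $\rank_\omega(a_w)\in D_w$ witnesses the Beth existential along $B\cup U$; direction (2) again invokes $F$: along the Beth bar witnessing $\mathcal{B}_J,v\Vdash\exists z\,B(z,\rank_\omega\bar b)$, the realiser at $w$ queries $F_w$ on $\godel{\exists z\,B(z,\bar y)}$ and $\seq{\bar b}$ to obtain the least natural-number witness $a$, and returns the ordinal $a$ (so $\rank_\omega(a)=a$) as $r(0;w)$ together with $r_B(a,\bar b)$ as $r(1;w)$.

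I expect the crux to be the part-(2) arguments for $\vee$ and $\exists$, where one must match the \emph{ordinary} Beth bar supplied by $\mathcal{B}_J$ against the \emph{$U$-bar} demanded by the realisability clause and transport disjunct indices and witnesses from $\omega$ into $\mathrm{L}$ uniformly in the node. The oracle system $F$ was defined precisely for this: the value of $F_w$ locates both the barring ancestor and the least index or witness, and one must verify that $F_w$ is defined at exactly the nodes of the chosen bar. The subtlety to check carefully is that the datum $F$ returns is forced at the bar node $w$ itself and not merely at the ancestor where $F$ found it, which is exactly where persistence of $\mathcal{B}_J$-forcing is applied.
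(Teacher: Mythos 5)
Your proposal is correct and follows essentially the same route as the paper's proof: a simultaneous induction with the base case supplied by \Cref{Lemma: Translation Base Case}, the implication case linking the two directions, the constant realiser $r_{B}(\bar b)$ for implications in part (ii), and the oracle $F$ supplying disjunct indices and least numerical witnesses for $\vee$ and $\exists$. Your closing remarks on reconciling Beth bars with $U$-bars and on using persistence to pull the datum returned by $F_w$ up to $w$ itself correctly identify the points the paper treats only implicitly.
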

\begin{proof}
    We prove (i) and (ii) simultaneously by induction so that both directions are available in the induction hypothesis. We begin with proving the cases for (i). The base case follows from \Cref{Lemma: Translation Base Case}. For conjunction, $A \wedge B$, note that $\Vdash_v A^\tau \wedge B^\tau$ entails $\Vdash_v A^\tau$ and $\Vdash_v B^\tau$. Hence, by induction hypothesis, $\mathcal{B}_J, v \Vdash A$ and $\mathcal{B}_J, v \Vdash B$. So, $\mathcal{B}_J, v \Vdash A \land B$. For disjunction, $A \vee B$, we have that $r \Vdash_v A^\tau \lor B^\tau$ entails that there is a $U$-bar $B$ for $v$ such that for every $w \in B$, either $r^w(0) = 0$ and $r^w(1) \Vdash_w A^\tau$ or $r^w(0) = 1$ and $r^w(1) \Vdash_w B^\tau$. By induction hypothesis, this means that there is a $U$-bar $B$ for $v$ such that for every $w \in B$, $w \Vdash A$ or $w \Vdash B$. Hence $v \Vdash A \lor B$. The case for implication is similar (making use of (ii) as well), and the cases for universal and existential quantification follow with the induction hypothesis.
    
    For the cases for (ii), we recursively construct the required realisers $r_A(b_0,\dots,b_n)$, uniform in $b_0,\dots,b_n \in \mathrm{L}$, for each formula $A$. Once more, the base case, $r_{P_i}(y_0,\dots,y_n)$, was established in \Cref{Lemma: Translation Base Case}. To keep notation light, we will write $\bar y$ for $y_0,\dots,y_n$ (or, potentially, a subsequence of this), and similarly for $\bar b$.
    
    For conjunction $(A \land B)(\bar y)$, take $r_{(A \land B)(\bar y)}(\bar b)(0) = r_A(\bar b)$ and $r_{(A \land B)(\bar y)}(\bar b)(1) = r_B(\bar b)$. An application of the induction hypothesis shows that $r_{(A \land B)(\bar y)}$ does the job. 
    
    For implication $(A \rightarrow B)(\bar y)$, we know by our induction hypothesis---for both (i) and (ii)---that $r_{B(\bar y)}(\bar b) \Vdash_w B(\bar b)$ if and only if $w \Vdash B(\bar b)$ for all $w \geq v$. Hence, let $r_{A \rightarrow B(\bar y)}(\bar b,s) = r_{B}(\bar b)$. It is straightforward to check that this does the job.

    For disjunction, define $r_{A \vee B}(\bar y)$ to be the $\SRM^{+,O}$ that, on input $\bar b$, returns a code $s$ for an $\SRM^{+,O}$ with parameters $\bar b$ that does the following. On input $0$, $s$ calls the oracle $F$ on $(\godel{(A \vee B)(\bar y)}, \seq{\bar b})$ and returns this value. On input $1$, $s$ returns $r_A(\bar b)$ if $F(\godel{(A \vee B)(\bar y)}, \seq{\bar b}) = 0$ and it returns $r_B(\bar b)$ otherwise. To see that $r_{(A \vee B)(\bar y)}$ does the job, assume that there is a $U$-bar $B$ such that for every $w \in B$, $w \Vdash A(\bar b)$ or $w \Vdash B(\bar b)$. Equivalently, by induction hypothesis, for every $w \in B$, $r_A(\bar b;w) \Vdash_w A(\rank_\omega(b_0),\dots,\rank_\omega(b_n))$ or $r_B(\bar b;w) \Vdash_w B(\rank_\omega(b_0),\dots,\rank_\omega(b_n))$. By definition of $r_{(A \vee B)(\bar y)}$, it follows that $r_{(A \vee B)(\bar y)}(\bar b;w)(0) = r_A$ or $r_{(A \vee B)(\bar y)}(\bar b;w)(0) = r_B$. Hence, $r_{(A \vee B)(\bar y)}(\bar b;w) \Vdash_w (A \vee B)(\bar b)$.
    
    For existential quantification, define $r_{\exists x A(x,\bar y)}$ to be the function that, on input $\bar b$, calls the oracle $F$ on input $(\godel{\exists x A(x,y)},\seq{\bar b})$. Let the result of this query be $n \in \omega$. Then let $r_{\exists x A(x,\bar y)}(0) = n$ and $r_{\exists x A(x,\bar y)}(1) = r_{A(n,\bar y)}$. Note here that we do not require the use of parameters because the realiser $r_{A(n,\bar y)}$ is uniform in $n,\bar y$. To check that $r_{\exists x A(x,\bar y)}$ does the job, let $\bar b \in \mathrm{L}$ and assume that there is a $U$-bar $B$ for $v$ such that, for every $w \in B$, there is some $n_w \in \omega$ such that $w \Vdash A(n_w, \rank_\omega(\bar b))$. By induction hypothesis, it follows that $r_{A(n_w,\bar y)}(\bar b;w) \Vdash_w A^\tau(n_w,\bar b)$ (as $\mathcal{B}_J$ has constant domain $\omega$ and $\rank_\omega(n_w) = n_w$), i.e. $r_{\exists x A(x,\bar y)}(\bar b, 0; w) \Vdash_w A^\tau(r_{\exists x A(x,\bar y)}(\bar b, 1; w),\bar b)$. Hence, $r_{\exists x A(x,\bar y)}(\bar b; v) \Vdash_v \exists x A^\tau(x, \bar b)$.
    
    For universal quantification, define $r_{\forall x A(x,\bar y)}(\bar y)$ to be the function that returns $r_{A(x,\bar y)}(x,\bar y)$.
\end{proof}

If $J$ is a set of formulas in first-order logic, we write $J^\tau$ for the image of $J$ under $\tau$ (i.e. $J^\tau = \tau[J]$).

\begin{theorem}
    \label{Theorem: Main result}
    Let $J$ be a recursively enumerable theory in intuitionistic first-order logic, and $T \subseteq \CZF + \mathrm{PowerSet} + \mathrm{AC}$.
    Then $T + J^\tau \vdash A^\tau$ if and only if $J \vdash_\IQC A$.
\end{theorem}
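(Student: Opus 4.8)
The plan is to derive the theorem from the dictionary between the universal Beth model $\mathcal{B}_J$ and the Beth-realisability model established in \Cref{Lemma: Translation}. The key observation is that the translation $\tau$ sends every predicate $P_i$ to a set-theoretic formula $\phi_i$ whose realisability at a node $v$ exactly mirrors the forcing of $P_i$ in $\mathcal{B}_J$ at $v$, with the correspondence mediated by the map $b \mapsto \rank_\omega(b)$ between constructible sets and the constant domain $\omega$ of $\mathcal{B}_J$. I would first reduce the biconditional to two implications and handle each using the soundness theorem for the realisability model together with \Cref{Theorem: Canonical fallible Beth model} on one side and \Cref{Lemma: Translation} on the other.

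**Forward direction ($T + J^\tau \vdash A^\tau \Rightarrow J \vdash_\IQC A$).**
Assume $T + J^\tau \vdash A^\tau$. Since every theorem of $T \subseteq \CZF + \mathrm{PowerSet} + \mathrm{AC}$ is realised in the Beth-realisability model (by the theorem that the model satisfies $\CZF + \mathrm{PowerSet} + \mathrm{AC}$), and since every $\phi \in J^\tau$ is realised at the root whenever the corresponding formula of $J$ holds in $\mathcal{B}_J$ (using part (ii) of \Cref{Lemma: Translation} together with $\mathcal{B}_J \Vdash J$), soundness of Beth-realisability for intuitionistic logic yields a realiser $r \Vdash_v A^\tau(\bar b)$ at every node $v \notin U$, for all parameters $\bar b$. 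By part (i) of \Cref{Lemma: Translation}, this gives $\mathcal{B}_J, v \Vdash A(\rank_\omega(\bar b))$ for all such $v$ and $\bar b$; since the domain of $\mathcal{B}_J$ is $\omega$ and every natural number is $\rank_\omega(b)$ for some $b \in \LL$, we conclude $\mathcal{B}_J \Vdash A$, and hence $J \vdash_\IQC A$ by \Cref{Theorem: Canonical fallible Beth model}.

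**Backward direction ($J \vdash_\IQC A \Rightarrow T + J^\tau \vdash A^\tau$).**
Suppose $J \vdash_\IQC A$. The cleanest route is to argue that $\tau$ is a genuine translation commuting with the connectives and quantifiers, so that $\vdash_\IQC$ is preserved: since intuitionistic first-order logic is closed under the translation $\tau$ and $T$ is based on intuitionistic logic, it suffices to show $T + J^\tau \vdash \phi^\tau$ for every axiom $\phi$ of $J$ (immediate, as $J^\tau$ is an axiom set of the theory on the left) and that the logical rules of $\IQC$ are respected by $\tau$. Because $\tau$ is defined to commute with $\wedge, \vee, \rightarrow$ and with $\forall, \exists$, a derivation of $A$ in $\IQC$ from $J$ translates step-by-step into a derivation of $A^\tau$ in $\IQC$ from $J^\tau$, which is a fortiori a derivation in $T + J^\tau$.

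**Main obstacle.**
The delicate point is the forward direction, specifically ensuring that the passage from ``realised at every $v \notin U$ for every constructible parameter $\bar b$'' to ``$\mathcal{B}_J \Vdash A$'' is airtight. One must check that the root of $\mathcal{B}_J$ lies outside $U$ (so that \Cref{Lemma: Sigma_1 Beth-realisability is truth in L} and part (i) of \Cref{Lemma: Translation} are applicable there) and that quantifying over all $b \in \LL$ with the right $\rank_\omega$-value genuinely covers every element of the constant domain $\omega$ at every node. The uniformity of the realisers $r_A$ in the parameters $\bar b$, already secured in \Cref{Lemma: Translation}, is what makes this quantifier manipulation legitimate; I expect the remaining bookkeeping to be routine once that uniformity is in hand.
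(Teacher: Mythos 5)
Your proof is correct and takes essentially the same route as the paper: the forward direction combines soundness of Beth-realisability, realisability of $T$ and of $J^\tau$, \Cref{Lemma: Translation}, and \Cref{Theorem: Canonical fallible Beth model} (the paper phrases this contrapositively, starting from $J \not\vdash A$ and concluding that $A^\tau$ has no realiser). Your backward direction, argued syntactically from the fact that $\tau$ commutes with the connectives and quantifiers, is the easy half that the paper dispatches in one line.
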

\begin{proof}
    The backwards direction is straightforward with the soundness of the Beth realisability model. For the forward direction, assume that $J \not \vdash A$. Then, by \Cref{Theorem: Canonical fallible Beth model}, we know that $\mathcal{B}_J \not \vdash A$. In this situation, \Cref{Lemma: Translation} implies that there is no realiser of $A^\tau$. But the same lemma implies that $B^\tau$ is realised for every $B \in J$. Hence, $T + J^\tau \not \vdash A^\tau$. 
\end{proof}

The following corollary follows immediately by taking $J = \emptyset$.

\begin{corollary}
    \label{Theorem: FOL of CZF is intuitionistic}
    Let $T \subseteq \CZF + \mathrm{PowerSet} + \mathrm{AC}$ be a set theory. Then the first-order logic of $T$ is intuitionistic first-order logic, $\QL(T) = \IQC$. In particular, $\QL(\CZF) = \IQC$.
\end{corollary}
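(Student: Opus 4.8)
The plan is to read off this corollary from the Main Theorem (\Cref{Theorem: Main result}) by specialising to the empty theory $J = \emptyset$, combined with the elementary fact that intuitionistic validity transfers across every first-order translation. The asserted equality $\QL(T) = \IQC$ splits into two inclusions, which I would treat separately. For the inclusion $\IQC \subseteq \QL(T)$, suppose $\vdash_\IQC A$. I would show that $T \vdash A^\sigma$ for \emph{every} first-order translation $\sigma$: fix a derivation of $A$ in $\IQC$ and replace each atomic formula $R(\bar x)$ occurring in it by $\sigma(R)(\bar x)$. Since $\sigma$ commutes with $\wedge, \vee, \rightarrow$ and with the quantifiers and satisfies $\sigma(\bot) = \bot$, the image of this derivation is again a correct $\IQC$-derivation, now of $A^\sigma$. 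As $T$ is based on intuitionistic first-order logic, $T \vdash A^\sigma$; as $\sigma$ was arbitrary, $A \in \QL(T)$.

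For the converse inclusion $\QL(T) \subseteq \IQC$, I would invoke the substantive content of the Main Theorem. The empty theory is trivially recursively enumerable and $\emptyset^\tau = \tau[\emptyset] = \emptyset$, so \Cref{Theorem: Main result} with $J = \emptyset$ specialises to $T \vdash A^\tau$ if and only if $\vdash_\IQC A$, where $\tau$ is the single fixed translation built from the universal Beth model $\mathcal{B}_\emptyset$ via $\tau(P_i) = \phi_i$ (this $\tau$ is a bona fide first-order translation by \Cref{Lemma: Translation Base Case}). Now let $A \in \QL(T)$; by definition $T \vdash A^\sigma$ for every translation $\sigma$, so in particular $T \vdash A^\tau$ for this distinguished $\tau$. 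The forward direction of the specialised Main Theorem then yields $\vdash_\IQC A$, i.e. $A \in \IQC$. Combining the two inclusions gives $\QL(T) = \IQC$, and since $\CZF \subseteq \CZF + \mathrm{PowerSet} + \mathrm{AC}$, taking $T = \CZF$ gives the headline case $\QL(\CZF) = \IQC$.

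There is essentially no remaining difficulty once the Main Theorem is available; the only point requiring care is the logical asymmetry between the two directions. The easy inclusion must hold for all translations $\sigma$ simultaneously, so it rests on general soundness of intuitionistic logic under translation rather than on the model construction, whereas the hard inclusion needs only the single explicitly constructed $\tau$—it suffices to exhibit one translation that refutes $A$ in $T$ whenever $A$ is not intuitionistically valid. The key observation making the specialisation clean is that this $\tau$ depends only on $\mathcal{B}_\emptyset$ and not on $A$, so it lies uniformly within the family of translations quantified over in the definition of $\QL(T)$ and witnesses failure of membership for every non-theorem $A$ at once.
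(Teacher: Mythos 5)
Your proposal is correct and follows exactly the paper's route: the paper proves this corollary in one line by setting $J = \emptyset$ in \Cref{Theorem: Main result}, and your two inclusions (the easy one from soundness of $\IQC$ under arbitrary translations, the hard one from the single distinguished translation $\tau$ built from the universal Beth model) are precisely the unpacking of that specialisation.
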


\begin{remark}
    \citet{Rathjen2002} points out that ``the combination of $\CZF$ and the general axiom of choice has no constructive justification in Martin-Löf type theory''. In contrast, our results show that the combination of $\CZF$ and the axiom of choice is innocent \textit{on a logical level} in that adding the axiom of choice does not result in an increase of logical strength: $\QL(\CZF + \mathsf{AC}) = \QL(\CZF) = \IQC$. Note, of course, that $\CZF + \AC$ satisfies the law of excluded middle for $\Delta_0$-formulas. This follows from the proof of Diaconescu's theorem (see \cref{Section: Preliminaries}) which only requires $\Delta_0$-separation to prove the law of excluded middle for $\Delta_0$-formulas. Such theories satisfying the law of excluded middle for $\Delta_0$-formulas but not in general are sometimes called semi-intuitionistic.
\end{remark}

\medskip

\subsection*{Acknowledgements} I am thankful for the very helpful remarks of an anonymous reviewer. Moreover, I would like to thank Merlin Carl, Lorenzo Galeotti, Benedikt Löwe, Benno van den Berg and Ned Wontner for helpful discussions. I thank Daniël Otten for spotting a few typos. 

\subsection*{Funding} This research was supported by a doctoral scholarship of the \emph{Studienstiftung des deutschen Volkes} (German Academic Scholarship Foundation).


\bibliographystyle{plainnat}
\bibliography{references}

\end{document}